
\documentclass[11pt]{article}

\usepackage{epsfig,graphics}
\usepackage{amssymb,amsmath,amsthm,bm}
\usepackage{epstopdf}
\usepackage{graphicx}
\usepackage{xcolor}
\usepackage{tikz}
\graphicspath{ {images/} }
\usepackage{color}
\usepackage{float}
\usepackage{bbm}
\usepackage{dsfont}

\topmargin0in \oddsidemargin0in \evensidemargin0in
\headheight0in\headsep0in \topskip0in

\textwidth6.6in
\textheight8.9in

\newtheorem{theo}{Theorem}
\newtheorem{lem}{Lemma}

\newtheorem{cor}{Corollary}
\newtheorem{mrem}{Remark}

\newtheorem{example}{Example}


\def\R{{\mathbb R}}

\def\RR{{\mathbb R}}

\def\ga{\alpha}
\def\gga{\gamma}
\def\gth{\theta}

\def\gb{\beta}

\def\gs{\sigma}
\def\gl{\lambda}
\def\gL{\Lambda}
\def\wt{\widetilde}

\def\gp{{\prime}}

\def\ep{\epsilon}
\def\vep{\varepsilon}

\def\gt{\triangle}

\def\b0{{\bf 0}}
\def\1{{\bf 1}}

\def\bd{{\bf d}}

\def\cE{\mathcal E}
\def\cG{\mathcal G}
\def\cH{\mathcal H}


\def\rem{{\rm rem}}

\def\err{{\rm res}}

\def\terr{{\rm err}}
\def\tErr{{\rm Err}}

\def\bd{{\rm bd}}
\def\Bd{{\rm Bd}}

\def\wh{\widehat}

\def\wt{\widetilde}

\makeatletter
\def\widebreve{\mathpalette\wide@breve}
\def\wide@breve#1#2{\sbox\z@{$#1#2$}%
     \mathop{\vbox{\m@th\ialign{##\crcr
\kern0.08em\brevefill#1{0.8\wd\z@}\crcr\noalign{\nointerlineskip}%
                    $\hss#1#2\hss$\crcr}}}\limits}
\def\brevefill#1#2{$\m@th\sbox\tw@{$#1($}%
  \hss\resizebox{#2}{\wd\tw@}{\rotatebox[origin=c]{90}{\upshape(}}\hss$}
\makeatletter
\def\wb{\widebreve}

\usepackage{scalerel,stackengine}
\stackMath
\newcommand\reallywidecheck[1]{%
\savestack{\tmpbox}{\stretchto{%
  \scaleto{%
    \scalerel*[\widthof{\ensuremath{#1}}]{\kern-.6pt\bigwedge\kern-.6pt}%
    {\rule[-\textheight/2]{1ex}{\textheight}}
  }{\textheight}%
}{0.5ex}}%
\stackon[1pt]{#1}{\scalebox{-1}{\tmpbox}}%
}
\parskip 1ex
\def\wc{\reallywidecheck}

\pagestyle{plain}

\begin{document}

\title{Signal Separation Based on Adaptive Continuous Wavelet Transform  and Analysis\thanks{
This work is partially supported by the Hong Kong Research Council, under Projects $\sharp$ 12300917 and $\sharp$ 12303218, and HKBU Grants $\sharp$ RC-ICRS/16-17/03 and $\sharp$ RC-FNRA-IG/18-19/SCI/01, the Simons Foundation, under Grant $\sharp$ 353185, and the National Natural Science Foundation of China, 
under Grants $\sharp$ 62071349, $\sharp$ 61972265  and  $\sharp$ 11871348, by National Natural Science Foundation of Guangdong Province of China,  under Grant $\sharp$ 2020B1515310008, by Educational Commission of Guangdong Province of China,  under Grant $\sharp$ 2019KZDZX1007, and by Guangdong Key Laboratory of Intelligent Information Processing, China.
}
}

\author{Charles K. Chui${}^{1}$,  Qingtang Jiang${}^2$, Lin Li${}^3$, and  Jian Lu${}^{4}$
}

\date{}

\maketitle


\bigskip

{\small 1. Department of Mathematics, Hong Kong Baptist University, Hong Kong.}

{\small 2. Department of Mathematics \& Statistics, 
University of Missouri-St. Louis, St. Louis,  MO 63121, USA.} 

{\small 3. School of Electronic Engineering, Xidian University, Xi${}'$an 710071, China.}  

{\small 4. Shenzhen Key Laboratory of Advanced Machine Learning and Applications, College of Mathematics \& Statistics, Shenzhen University, Shenzhen 518060, China.} 

\begin{abstract}
In nature and the technology world, acquired signals and time series are usually affected by multiple complicated factors and appear as multi-component non-stationary modes.   In many situations it is necessary to separate these signals or time series to a finite number of mono-components to represent the intrinsic modes and underlying dynamics implicated in the source signals. 
Recently the synchrosqueezed transform (SST) was developed as an empirical mode decomposition (EMD)-like tool to enhance the time-frequency resolution and energy concentration of a multi-component non-stationary signal and provides more accurate component recovery.  To recover individual components,  the SST method consists of two steps. First the  instantaneous frequency (IF) of a component is estimated from the SST plane. Secondly,  after IF is recovered, the associated component is computed by a definite integral along the estimated IF curve on the SST plane.  The reconstruction accuracy for a component depends heavily on the accuracy of the IFs estimation carried out in the first step. More recently, a direct method of the time-frequency approach, called signal separation operation (SSO), was introduced for 
multi-component signal separation. While both SST and SSO are mathematically rigorous on IF estimation, SSO avoids the second step of the two-step SST method in component recovery (mode retrieval).  The SSO method is based on some variant of the short-time Fourier transform. In the present paper, we propose a direct  method of signal separation based on the adaptive continuous wavelet-like transform (CWLT) by introducing two models of the adaptive CWLT-based approach for signal separation: the sinusoidal signal-based model and the linear chirp-based model, which are derived respectively from sinusoidal signal approximation and the linear chirp approximation at any time instant. A more accurate component recovery formula is derived from linear chirp local approximation.  We present the theoretical analysis of our approach. For each model, we establish the error bounds for  IF estimation and component recovery. 

\end{abstract}

\section{Introduction}
Real-world signals  and time series are mostly non-stationary and multi-component, given by
\begin{equation}
\label{AHM0}
x(t)=A_0(t)+\sum_{k=1}^K x_k(t), \qquad x_k(t)=A_k(t)  \cos \left(2\pi \phi_k(t)\right), 
\end{equation}
with $A_k(t), \phi_k'(t)>0$, where  $A_0(t)$ is the trend, and $A_k(t), 1\le k\le K$, are called the instantaneous amplitudes  and $\phi'_k(t)$ the instantaneous frequencies (IFs).  Therefore, modeling a non-stationary signal  $x(t)$ as in \eqref{AHM0}
is instrumental to extract information hidden in $x(t)$.
 
In this regard, the empirical mode decomposition (EMD) scheme  introduced in \cite{Huang98} is a popular method to decompose a non-stationary signal into the form of  \eqref{AHM0}.
There are many articles that study the properties of EMD and propose its variants, see e.g. 
\cite{Cicone20, HM_Zhou16, Flandrin04, LCJJ19, HM_Zhou09,Oberlin12a, Rilling08,van20,Y_Wang12,Wu_Huang09, Xu06}. However EMD could lead to mode mixtures or artifacts \cite{Li_Ji09} and cannot be used to recover the actual component as modelled by \eqref{AHM0}.

On the other hand, the objective of signal separation is to solve an inverse problem of recovering signal component $x_k(t)$ in \eqref{AHM0}. For stationary signals, there is the classic work of De Prony (called Prony's method) \cite{De Prony}, and its improvements to the well-known MUSIC \cite{MUSIC} and ESPRIT \cite{ESPRIT} algorithms, based on the mathematical model of exponential sums (in terms of constant frequencies), for first extracting the frequencies, from which the sub-signals are recovered. We may call this the time-frequency approach for signal separation. 

The objective of EMD is to decompose the given multi-component signal $x(t)$ into a finite number of components, called intrinsic mode functions (IMFs), and then to reformulate each IMF $y_k(t)$ as an amplitude-frequency modulated signal, $y_k(t)=B_k(t)\cos \big(2\pi\gth_k(t)\big)$ artificially, by taking the real part of the polar formulation of the analytic extension of $y_k(t)$ via the Hilbert transform. On the contrary, the time-frequency approach is to solve the inverse problem of recovering the signal components $x_k(t)=A_k(t)\cos \big(2\pi\phi_k(t)\big)$ from the blind-source data $x(t)$, assuming that $x(t)$ is governed by the signal model defined by \eqref{AHM0}, by first extracting the IFs $\phi^\gp_k(t)$ and then recovering the signal component $x_k(t)=A_k(t)\cos \big(2\pi\phi_k(t)\big)$ by using the recovered IF, for each $k$. In this regards, the first time-frequency approach for resolving the inverse problem for non-stationary signals is the synchrosqueezed transform (SST), introduced in \cite{Daub_Maes96} and 
discussed in the Princeton Ph.D. dissertation \cite{Wu_thesis} by using both the 
continuous wavelet transform (CWT) and the short-time Fourier transform (STFT). 
Further development based on the CWT is the pioneering paper  \cite{Daub_Lu_Wu11}, followed by another paper \cite{Thakur_Wu11} based on STFT (see also \cite{MOM14}). 
Of course, we may consider SST as an alternative to EMD and its variants, that overcomes some limitations of the EMD scheme \cite{Flandrin_Wu_etal_review13}. 
Other types of SST, such as SST with vanishing moment wavelets with stacked knots \cite{Chui_Lin_Wu15}, 
a hybrid EMD-SST computational scheme \cite{Chui_Walt15}, 
matching demodulation transform-based SST \cite{Li_Liang12,Wang_etal14, Jiang_Suter17,WCSGTZ18}, synchrosqueezed curvelet transform \cite{Yang14}, 
synchrosqueezed wave packet transforms \cite{Yang15}, the SST based on S-transform \cite{S_transform_SST15}, the multitapered SST \cite{Daub_Wang_Wu16}, 
 the 2nd-order SST 
 \cite{MOM15,OM17, BMO18, Pham17b},  
 the adaptive SST \cite{Wu17, Saito17, LCHJJ18, LCJ18, CJLS18,LJL20}, have been proposed and studied. 

To recover the individual components $x_k(t)$,  the SST method consists of two steps. First the IF $\phi'_k(t)$ of each $x_k(t)$ is estimated from the SST plane. Secondly,  after IF is recovered, $x_k(t)$ is computed by a definite integral along each estimated IF curve on the SST plane.   
The reconstruction accuracy for $x_k(t)$ depends heavily on the accuracy of the IFs estimation carried out in the first step. On the other hand,  a direct time-frequency approach, called signal separation operation or operator (SSO), was introduced in \cite{Chui_Mhaskar15} for multi-component signal separation. The difference of the SSO approach is that the components are reconstructed simply  by substituting the time-frequency ridge to SSO. The component recovery formula in \cite{Chui_Mhaskar15} is derived based on sinusoidal signal approximation. When considered as a decomposition scheme, to overcome the limitations of EMD, a hybrid EMD-SSO computational scheme is developed in \cite{CMW16}. 
Recently the authors of \cite{LCJ20} show that  the SSO is related to the adaptive short-time Fourier transform (STFT). With the adaptive STFT, they then obtain a more accurate component recovery formula derived from the  linear chirp (also called linear frequency modulation signal) approximation at any local time. 
Most recently the authors of \cite{CJLL20_adpSTFT} carry out 
a theoretical analysis of the component recovery formula in \cite{LCJ20} which is derived from linear chirp local approximation. In addition, linear chirp local approximation-based SSO approach has been extended recently in \cite{LHJC20} to 3-dimensional case with variables of time, frequency and chirp rate to recover components with crossover IFs.

The SSO approach in \cite{Chui_Mhaskar15,LCJ20,LHJC20, CJLL20_adpSTFT} are based on some variant of STFT. In this paper  we introduce and develop a signal separation method based on the adaptive continuous wavelets of the form:  
\begin{equation}
\label{wavelet_general}
\psi_\gs(t):= \overline{\frac 1\gs g(\frac t\gs)} e^{i2\pi \mu t}, 
\end{equation}
where $\gs>0, \mu>0$,  $g\in L_2(\R)$, and  $\mu$ is a fixed positive parameter. 
The parameter $\gs$  in $\psi_\gs(t)$ is also called the window width in the time-domain of wavelet $\psi_\gs(t)$.
The CWT of $x(t)$ with a time-varying parameter considered in \cite{LCJ18} is defined by
\begin{eqnarray}
\nonumber \wt W_x(a, b)\hskip -0.6cm &&:= 
\int_{-\infty}^\infty x(t) \frac 1{a}\overline{\psi_{\gs(b)} \big(\frac{t-b}a\big)} dt\\
&&\label{def_CWT_para}
=\int_{-\infty}^\infty x(b+at) \frac 1{\gs(b)} g\Big(\frac t{\gs(b)}\Big)e^{-i2\pi \mu t} dt, 
\end{eqnarray}
where $\gs=\gs(b)$ is a positive function of $b$. Since in this paper, $\psi_\gs$ given by \eqref{wavelet_general} is not required to satisfy the so-called admissible condition, we call $\wt W_x(a, b)$  given by \eqref{def_CWT_para} the adaptive continuous wavelet-like transform (CWLT) of $x(t)$ with respect to $\psi_\gs$.

For simplicity of our presentation, we consider the complex version of 
\eqref{AHM0} with the trend $A_0(t)$ being removed, namely,
\begin{equation}
  \label{AHM}
  x(t)=\sum_{k=1}^K x_k(t)=\sum_{k=1}^K A_k(t) e^{i2\pi\phi_k(t)}
  \end{equation}
 with $A_k(t), \phi_k'(t)>0$. In addition, we assume that $\phi_{k-1}'(t)<\phi_k'(t), t\in \RR$ for $2\le k\le K$. The reader is referred to \cite{Chui_Mhaskar15} for the methods to remove the trend $A_0(t)$. We assume the following conditions hold: 
 \begin{eqnarray}
\label{cond_basic0}&&A_k(t)\in L_\infty(\R), \; A_k(t)>0, t\in \RR,\\
\label{cond_basic}&&  \phi_k(t)\in C^2(\R), \; \inf_{t\in \R} \phi_k'(t)>0, \; \sup_{t\in \R} \phi_k'(t)<\infty, \\
\label{condition1} && |A_k(t+\tau)-A_k(t)|\le \vep_1 |\tau| A_k(t),  \; t\in \R, \; 1\le k\le K, 
\end{eqnarray}
for $\vep_1>0$. When $\vep_1$ is small, \eqref{condition1} means $A_k(t)$ changes slowly. 

In this paper we consider two models of the adaptive CWLT-based signal separation method: the sinusoidal signal-based model and the linear chirp-based model. For the sinusoidal signal-based model,  
we assume that the $\phi_k$ satisfy  
\begin{eqnarray}
\label{condition1b}&& |\phi''_k(t)|\le \vep_2, \; t\in \R, \; 1\le k\le K, 
\end{eqnarray}
where $\vep_2>0$ is a small number.  In this model, 
 each component $x_k(t)$ is well approximated locally by a sinusoidal signal at any local time $b$ in the sense that 
\begin{eqnarray*}
\wt W_{x_k}(a, b)\hskip -0.6cm &&= \int_{-\infty}^\infty A_k(b+at) e^{i2\pi\phi_k(b+at)}  \frac 1{\gs(b)} g\Big(\frac t{\gs(b)}\Big)e^{-i2\pi \mu t} dt \\
&& \approx  \int_{-\infty}^\infty A_k(b) e^{i2\pi(\phi_k(b)+\phi'_k(b) at)}  \frac 1{\gs(b)} g\Big(\frac t{\gs(b)}\Big)e^{-i2\pi \mu t} dt \\
&&= x_k(b)\wh g\big(\gs(b)(\mu-a \phi_k'(b))\big). 
\end{eqnarray*}
Under certain condition similar to the following well-separated condition for conventional SST  
\begin{equation}
\label{freq_resolution}
\frac{\phi_k'(t)-\phi'_{k-1}(t)}{\phi_k'(t)+\phi'_{k-1}(t)}\ge \gt, \;  t\in \R, \; 2\le k\le K, 
\end{equation}
where $0<\gt <1$, the set $\{a: \; |\wt W_x(a, b)|>\wt \ep_1\}$, where $\wt \ep_1>0$, can be  expressed as non-overlapping union of $\cG_{b, k}$ with $\frac \mu{\phi'_k(b)}\in \cG_{b, k}$, $1\le k\le K$. 
Denote 
\begin{equation}
\label{def_max_eta}
\wh a_k =\wh a_k(b):={\rm argmax}_{a \in\mathcal{G}_{b, k}  }
|\wt W_x(a,b)|, ~~ k=1,\cdots, K.
\end{equation}
 Then we can use $\mu/{\wh a_k(b)}$ to approximate $\phi'_k(b)$:
 \begin{equation}
 \label{IF_approx1}
\phi'_k(b) \approx   \frac {\mu}{\wh a_k(b)}; 
 \end{equation}
  and most importantly,  we may reconstruct each component $x_k(b)$ by simply substituting 
$\wh a_k(b)$ to $a$ in $\wt W_x(a, b)$:  
\begin{equation}
\label{comp_xk_est0}
x_k(b) \approx \wt W_x(\wh a_k(b), b).
\end{equation}

The sinusoidal signal-based model requires that the instantaneous frequency $\phi'_k$ changes slowly, namely \eqref{condition1b} holds for a small $\vep_2>0$. To separate a multi-component signal with significantly changing  instantaneous frequency components, we propose the linear chirp-based 
model. In this model, we assume that the $\phi_k$ satisfy the following conditions:   
\begin{eqnarray}
 \label{cond_basic_2nd} 
 && 
  \phi_k(t)\in C^3(\R),  
\phi''_k(t) \in L_\infty(\R), \\
\label{condition2} && |\phi^{(3)}_k(t)|\le \vep_3, \; t\in \R, \; 1\le k\le K,  
\end{eqnarray}
where $\vep_3>0$ is a small number.  In this case $\phi_k(t)$ is not required to satisfy \eqref{condition1b}, which means IF $\phi_k'(t)$ is allowed to change rapidly.  
In this linear chirp-based model, each component $x_k(t)$ is well approximated by a linear chirp at any local time  $b$ in the sense that 
\begin{eqnarray*}
\wt W_{x_k}(a, b)\hskip -0.6cm &&=\int_\R x_k(b+a t)\frac 1{\gs(b)}g\Big(\frac t{\gs(b)}\Big) e^{-i2\pi \mu t}dt\\
&&\approx \int_\R A_k(b)e^{i2\pi (\phi_k(b)+\phi_k'(b) a t +\frac12\phi''_k(b) a^2t^2)}\frac 1{\gs(b)}g\Big(\frac t{\gs(b)}\Big) e^{-i2\pi \mu t}dt\\
&&
= x_k(b)  
G_k\big(\gs(b)(\mu-a \phi_k'(b) ), a, b\big), 
\end{eqnarray*}
where for any given $a, b$, we use $G_k(\xi, a, b)$ to denote the Fourier transform of $e^{i\pi \phi''_k(b)a^2\gs^2(b) t^2}g(t)$, namely,
\begin{equation}
\label{def_Gk_general}
G_k(\xi, a, b):=
\int_{\R} e^{i\pi \phi''_k(b) a^2 \gs^2(b)t^2}g(t) e^{-i2\pi \xi t}dt.
\end{equation}

Under certain well-separated conditions, the set $\{a: \; |\wt W_x(a, b)|>\wt \ep_1\}$
can be  expressed as non-overlapping union of another group of sets $\cH_{b, k}$ with $\frac \mu{\phi'_k(b)}\in \cH_{b, k}$, $1\le k\le K$. Denote 
\begin{equation}
\label{def_max_eta_2nd}
\wc a_k :=\wc a_k(b):={\rm argmax}_{a \in\mathcal{H}_{b, k}  }\Big|\frac{\wt W_x(a, b)}{G_k(0,  a, b)}\Big|, ~~ k=1,\cdots, K.
\end{equation}
Then $\mu/{\wc a_k(b)}$ and $\wt W_x(\wc a_k(b), b)$ are the reconstructed instantaneous frequency and components:

\begin{equation}
 \label{comp_xk_est_2nd1}
 \phi'_k(b)\approx \frac\mu{\wc a_k(b)}, \quad  x_k(t) \approx \frac 1{G_k(0, \wc a_k, b)}\wt W_{x}(\wc a_k, b).
\end{equation} 
Again, the reconstructed component of $x_k(b)$ is obtained by simply substituting 
$\wc a_k(b)$ to $a$ in $\wt W_x(a, b)$.   Note that there is a factor 
 $\frac 1{G_k(0, \wc a_k, b)}$ 
in \eqref{comp_xk_est_2nd1}. Hence, the linear chirp-based model is different from the  sinusoidal signal-based counterpart. 

We will present the theoretical analysis of our approach in the next two sections, Sections 2 and 3. More precisely, we will establish the error bounds for $|\mu-{\wh a_k(b)} \phi'_k(b)|$ and 
$|\wt W_x(\wh a_k(b), b)-x_k(b)|$ in Section 2, and  
obtain such error bounds for $|\mu-{\wc a_k(b)} \phi'_k(b)|$ and 
$|\wt W_x(\wc a_k(b), b)-x_k(b)|$ in Section 3. We will provide some experimental results in Section 4, the last section of this paper. 
Compared with \cite{Chui_Mhaskar15} and very recent papers on the CWT-based SSO \cite{Chui_Han20,Chui_Mhaskar20},  this paper proposes both the sinusoidal signal-based and the linear chirp-based adaptive CWLT models, while \cite{Chui_Mhaskar15,Chui_Han20,Chui_Mhaskar20} consider the sinusoidal signal-based model only in univariable and multivariable cases or with CWT of high varnishing moments.

Observe that in \eqref{IF_approx1} and \eqref{comp_xk_est_2nd1}, we use 
$\mu/{\wh a_k}$ and $\mu/{\wc a_k}$ to approximate instantaneous frequency $\phi'_k$. Thus $\xi= \frac\mu a$ represents the frequency variable.  Actually when $\xi= \frac\mu a$ and the window function $g$  is the Gaussian function defined by   
\begin{equation}
\label{def_g}
g(t)=\frac 1{\sqrt {2\pi}} e^{-\frac {t^2}2}, 
\end{equation}
then  the adaptive CWLT $\wt W_x(a, b)$ of $x(t)$  defined by \eqref{def_CWT_para} is 
$$
\wt W_x(\frac \mu \xi, b) = 
\frac 1{\sqrt {2\pi}} 
\int_{-\infty}^\infty x(t) \frac{\xi}\mu  \frac 1{\gs(b)} e^{-\frac {\xi^2}{\mu^2}\big(\frac{t-b}{\gs(b)}\big)^2}
e^{-i2\pi \xi (t-b)} dt. 
$$ 
We remark that the following transform 
$$
S_x(\xi, b):=\frac 1{\sqrt {2\pi}} 
\int_{-\infty}^\infty x(t) |\xi|  e^{-\xi^2( t-b)^2}
e^{-i2\pi \xi t} dt 
$$
is called the S-transform of $x(t)$, see \cite{S_transform1996}. Thus our adaptive CWLT $\wt W_x(\frac \mu \xi, b)$ is a (generalized) S-transform with a time-varying parameter $\gs(b)$.

Before moving on to the next section, we also remark that in practice, for a particular signal $x(t)$, its CWLT $W_x(a,b)$ and adaptive CWLT $\wt W_x(a,b)$ lie in a region of the scale-time plane:
$$
\{(a, b): \; a_1(b)\le a\le a_2(b), b\in \RR\}
$$ 
for some $0<a_1(b),  a_2(b) <\infty$. That is $W_x(a,b)$ and $\wt W_x(a,b)$ are negligible for  $(a, b)$ outside this region. Thus we need not to worry about whether $\psi_\gs$ in \eqref{wavelet_general} satisfies the so-called \lq\lq{}admissible condition\rq\rq{} or not. Throughout this paper we assume for each $b\in \RR$, the scale $a$ is in the interval:
\begin{equation}
\label{a_interval} 
a_1(b)\le a\le a_2(b). 
\end{equation}
In addition, we will use the following notations:  
\begin{equation}\label{def_M_u}
\nu(b):= \min_{1\leq k\leq K} A_{k}(b), \; M(b):=\sum_{k=1}^K A_{k}(b).
\end{equation}
Throughout this paper, $\sum_{k\not= \ell}$ denotes $\sum_{\{k: ~ k\not= \ell, 1\le k\le K\}}$.    
Furthermore,  by a window function $g(t)$, we mean that it is a function in $ L_2(\R)$ with certain decay at $\infty$, and    
$$
\int_\R g(t) dt =1.  
$$

\section{Sinusoidal signal-based method}


In this section we study the sinusoidal signal-based approach. 
We assume the multi-component signals of \eqref{AHM} satisfy conditions \eqref{cond_basic0}-\eqref{condition1b}. First we show that under conditions \eqref{condition1} and \eqref{condition1b}, a  multi-component signal is well-approximated by sinusoidal signals at any local time provided that $\vep_1, \vep_2$ are small. 
More precisely, write $x_k(b+a t)$ as
\begin{eqnarray*}
x_k(b+a t)\hskip -0.6cm &&=A_k(b+a t)e^{i2\pi \phi_k(b+a t)}=
A_k(b)e^{i2\pi (\phi_k(b)+\phi_k'(b) at) }+\big(x_k(b+a t)-A_k(b)e^{i2\pi (\phi_k(b)+\phi_k'(b) at) }\big). 
\end{eqnarray*}
Note that as a function of $t$, $A_k(b)e^{i2\pi (\phi_k(b)+\phi_k'(b) at) }$ is a sinusoidal function.  Then the adaptive CWLT  $\wt W_x(a, b)$ of $x(t)$ defined by \eqref{def_CWT_para} with $g$ can be expanded as 
\begin{eqnarray}
\nonumber
\wt W_x(a, b)\hskip -0.6cm &&=\sum_{k=1}^K \int_\R x_k(b+a t)\frac 1{\gs(b)}g\Big(\frac t{\gs(b)}\Big) e^{-i2\pi \mu t}dt\\
\nonumber
&&=\sum_{k=1}^K \int_\R x_k(b)e^{i2\pi \phi_k'(b) at}\frac 1{\gs(b)}g\Big(\frac t{\gs(b)}\Big) e^{-i2\pi \mu t}dt +\rem_0,
\end{eqnarray}
or
\begin{equation}
\label{CWT_approx_1st}
\wt W_x(a, b)
=\sum_{k=1}^K x_k(b)  \wh g\big(\gs(b)(\mu-a \phi_k'(b))\big) +\rem_0,
\end{equation}
where $\rem_0$ is the remainder for the expansion of  $\wt W_x(a, b)$ given by
\begin{eqnarray}
\nonumber \rem_0\hskip -0.6cm &&:=\sum_{k=1}^K \int_\R \Big\{x_k(b+a t)-A_k(b)e^{i2\pi (\phi_k(b)+\phi_k'(b) at) }\Big\}\frac 1{\gs(b)}g\Big(\frac t{\gs(b)}\Big) e^{-i2\pi \mu t}dt\\
\label{def_rem0} &&=\sum_{k=1}^K \int_\R \Big\{
\big(A_k(b+a t)-A_k(b)\big)e^{i2\pi \phi_k(b+a t)}
\\
\nonumber  &&\qquad \qquad
+x_k(b)e^{i2\pi \phi_k'(b) at }\big(
e^{i2\pi (\phi_k(b+a t)-\phi_k(b)-\phi_k'(b) at)}-1\big)
\Big\}\frac 1{\gs(b)}g\Big(\frac t{\gs(b)}\Big) e^{-i2\pi \mu t}dt.
\end{eqnarray}
First we have the following lemma about the bound of $\rem_0$. 

\begin{lem}
Suppose $x(t)$ is a multi-component signal of \eqref{AHM} satisfying \eqref{cond_basic0}-\eqref{condition1b} for some $\vep_1>0, \vep_2>0$. Let $\wt W_x(a, b)$ be its adaptive CWLT of $x(t)$ with a window function $g$. Then 
$\wt W_x(a, b)$ can be written as \eqref{CWT_approx_1st} with 
\begin{equation}
\label{rem0_est}
|\rem_0|\le \lambda_0(a, b):=M(b)a\gs(b)\big(\vep_1  I_1  +\pi  \vep_2 I_2 a \gs(b)\big),
\end{equation}
where
\begin{equation}
\label{def_In}
I_n:=\int_\R  | t^n g(t)| dt, \;  n=1, 2, \cdots. 
\end{equation}
\end{lem}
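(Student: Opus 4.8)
The plan is to estimate the two bracketed terms in the integrand of \eqref{def_rem0} separately---the amplitude-variation term and the phase-variation term---bound each one pointwise in $t$ using the hypotheses \eqref{condition1} and \eqref{condition1b}, and then integrate out the rescaled window $\frac1{\gs(b)}g(t/\gs(b))$ by a change of variables.

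First I would treat the amplitude term $\big(A_k(b+at)-A_k(b)\big)e^{i2\pi\phi_k(b+at)}$. The exponential has modulus one, so this term has modulus $|A_k(b+at)-A_k(b)|$, and applying \eqref{condition1} with $\tau=at$ (recall $a>0$) bounds it by $\vep_1\,a|t|\,A_k(b)$. Next, for the phase term $x_k(b)\,e^{i2\pi\phi_k'(b)at}\big(e^{i2\pi(\phi_k(b+at)-\phi_k(b)-\phi_k'(b)at)}-1\big)$: the factors $x_k(b)$ and $e^{i2\pi\phi_k'(b)at}$ have moduli $A_k(b)$ and $1$, and for the last factor I would use the elementary inequality $|e^{i\theta}-1|\le|\theta|$ together with the first-order Taylor expansion with Lagrange remainder, $\phi_k(b+at)-\phi_k(b)-\phi_k'(b)at=\tfrac12\phi_k''(\xi)\,a^2t^2$ for some $\xi$ between $b$ and $b+at$ (legitimate since $\phi_k\in C^2(\R)$ by \eqref{cond_basic}); by \eqref{condition1b} this is at most $\tfrac12\vep_2\,a^2t^2$ in modulus. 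Hence the phase term is bounded by $A_k(b)\,\pi\vep_2\,a^2t^2$.

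Combining the two bounds, the $k$-th summand of the integrand of $\rem_0$ is dominated in modulus by $A_k(b)\big(\vep_1\,a|t|+\pi\vep_2\,a^2t^2\big)\frac1{\gs(b)}\big|g(t/\gs(b))\big|$. I then integrate in $t$ and substitute $s=t/\gs(b)$, which turns $\int_\R|t|\,\frac1{\gs(b)}|g(t/\gs(b))|\,dt$ into $\gs(b)\,I_1$ and $\int_\R t^2\,\frac1{\gs(b)}|g(t/\gs(b))|\,dt$ into $\gs(b)^2\,I_2$, with $I_n$ as in \eqref{def_In}. Summing over $k$ and using $\sum_{k=1}^K A_k(b)=M(b)$ gives $|\rem_0|\le M(b)\big(\vep_1\,a\gs(b)\,I_1+\pi\vep_2\,a^2\gs(b)^2\,I_2\big)$, which is precisely $\lambda_0(a,b)$.

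There is no essential difficulty in this argument; the only points demanding a little care are invoking the Lagrange form of the Taylor remainder (which needs the $C^2$ smoothness from \eqref{cond_basic}) and keeping track of the various powers of $a$ and $\gs(b)$ produced by the change of variables. One could alternatively bound $|e^{i\theta}-1|\le|\theta|$ via $\big|\int_0^\theta e^{is}\,ds\big|$ if one prefers to avoid the mean value theorem, but either route is routine.
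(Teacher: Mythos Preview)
Your proof is correct and follows essentially the same route as the paper: bound the amplitude term by $\vep_1 a|t|A_k(b)$ via \eqref{condition1}, bound the phase term using $|e^{i\theta}-1|\le|\theta|$ together with the Taylor/Lagrange remainder and \eqref{condition1b} to get $\pi\vep_2 a^2t^2 A_k(b)$, then integrate against the rescaled window and sum over $k$. The paper's write-up is slightly terser (it leaves the change of variables implicit), but the argument is the same.
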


\begin{proof}
With $|A_k(b+a t)-A_k(b)|\le  \vep_1 a |t| A_k(b)$ and
$$
|e^{i2\pi (\phi_k(b+a t)-\phi_k(b)-\phi_k'(b) at)}-1|\le 2\pi |\phi_k(b+a t)-\phi_k(b)-\phi_k'(b) at|
\le
\pi\vep_2 a^2|t|^2,
$$
we have
\begin{eqnarray*}
|\rem_0|\hskip -0.6cm && \le \sum_{k=1}^K \int_\R  \vep_1 a | t| A_k(b) \frac 1{\gs(b)}\Big|g\Big(\frac t{\gs(b)}\Big)\Big| dt+ \sum_{k=1}^K A_k(b)\int_\R  \pi \vep_2 a^2 | t|^2 \frac 1{\gs(b)}\Big|g\Big(\frac t{\gs(b)}\Big)\Big|
 dt   \\
 &&=\vep_1 I_1 a \gs(b) \sum_{k=1}^K A_k(b) +\pi  \vep_2 I_2 a^2 \gs^2(b) \sum_{k=1}^K A_k(b)=\gl_0(a, b),
\end{eqnarray*}
as desired. 
\end{proof}

If $\vep_1$ and $\vep_2$ are small, then the remainder $\rem_0$ in \eqref{CWT_approx_1st} is negligible. Hence,  the term $x_k(b)\wh g\big(\gs(b)(\mu-a \phi_k'(b))\big)$  in \eqref{CWT_approx_1st} determines the scale-time zone of the adaptive CWLT $\wt W_{x_k}(a, b)$ of the $k$th component $x_k(t)$ of $x(t)$. More precisely, if $g$ is band-limited, 
to say supp($\wh g)\subset [-\ga, \ga]$ for some $\ga>0$, then $x_k(b)\wh g\big(\gs(b)(\mu-a \phi_k'(b))\big)$ lies  within the zone of the scale-time plane:
 $$
Z_k:=\Big\{(a, b): |\mu - a \phi_k'(b)|< \frac {\ga}{\gs(b)}, b\in \R\Big\}.
 $$
The upper and lower boundaries of $Z_k$ are respectively
$$
\mu - a \phi_k'(b)= \frac {\ga}{\gs(b)}\; \hbox{and} \;  \mu - a \phi_k'(b)=-\frac {\ga}{\gs(b)}
$$
or equivalently
$$
a=(\mu + \frac {\ga}{\gs(b)})/ \phi_k'(b)\; \hbox{and} \;  a=(\mu - \frac {\ga}{\gs(b)})/ \phi_k'(b).
$$
Thus $Z_{k-1}$ and $Z_k$ do not overlap (with $Z_{k-1}$ lying above $Z_k$ in the time-scale plane) if
\begin{equation}
\label{separated_cond_1st0}
(\mu + \frac {\ga}{\gs(b)})/ \phi_k'(b) \le (\mu - \frac {\ga}{\gs(b)})/ \phi_{k-1}'(b),
\end{equation}
or equivalently
\begin{equation}
\label{separated_cond_1st}
\gs(b)\ge \frac {\ga}\mu  \frac { \phi_k'(b)+\phi_{k-1}'(b)}{ \phi_k'(b)-\phi_{k-1}'(b)}, \; b\in \RR.
\end{equation}
Therefore the multi-component signal $x(t)$ is well-separated (that is $Z_k\cap Z_{\ell}=\emptyset, k\not=\ell$), provided that $\gs(b)$ satisfies \eqref{separated_cond_1st} for $k=2, \cdots, K$. 
Observe that our well-separated condition \eqref{separated_cond_1st} is different from that in \eqref{freq_resolution} considered in \cite{Daub_Lu_Wu11}.

If $\wh g$ is not compactly supported,
we consider the ``support'' of  $\wh g$ outside which $\wh g(\xi)\approx 0$.
More precisely, for a given small positive threshold $\tau_0$, if
$|\wh g(\xi)|\le \tau_0$
for $|\xi|\ge \ga$ for some $\ga>0$, then we say $\wh g(\xi)$ is {\it essentially supported} in $[-\ga, \ga]$.
When $|\wh g(\xi)|$ is even and (strictly) decreasing for $\xi\ge 0$,   
then $\ga$ is obtained by solving
\begin{equation}
\label{def_ga_general}
|\wh g(\ga)|=\tau_0.
\end{equation}
For example, when  $g$ is the Gaussian function defined by \eqref{def_g},  
then, with $\wh g(\xi)=e^{-2\pi^2 \xi^2}$,  the corresponding $\ga$ is given by
\begin{equation}
\label{def_ga}
\ga=\frac 1{2\pi}\sqrt{2\ln (1/{\tau_0 })} .
\end{equation}

When $\wh g$ is not compactly supported, let $\ga$ be the number defined by \eqref{def_ga_general}, namely assume  $\wh g(\xi)$ is {\it essentially supported} in $[-\ga, \ga]$.
Then $x_k(b)\wh g\big(\gs(b)(\mu-a \phi_k'(b))\big)$
 lies within the scale-time zone $Z_k$ defined by 
\begin{equation}
\label{def_Zk}
Z_k:=\Big\{(a, b): |\wh g\big(\gs(b)(\mu-a \phi_k'(b))\big) |>\tau_0, b\in \R\Big\}=
\Big\{(a, b): |\mu -a \phi_k'(b)|< \frac {\ga}{\gs(b)}, b\in \R\Big\}.
\end{equation}
Thus if the remainder $\rem_0$ in \eqref{CWT_approx_1st} is small,
$\wt W_{x_k}(a, b)$  {\it essentially lies} within $Z_k$ and hence,
the multi-component signal $x(t)$ is well-separated provided that  $\gs(b)$ satisfies \eqref{separated_cond_1st} for $2\le k\le K$.
When \eqref{freq_resolution} holds, a simple choice of $\gs(b)\equiv \frac \ga{\mu \gt}$ satisfies \eqref{separated_cond_1st}. 
Refer to \cite{LCHJJ18} for other choices of  $\gs(b)$.  In this section we assume $\gs(b)$ is such a function that  \eqref{separated_cond_1st} holds. 
In the following, for $\vep_1>0, \vep_2>0$ and $\ga>0$, we let ${\cal C}_{\vep_1, \vep_2}$ denote the set of 
 the multi-component signals of \eqref{AHM} satisfying \eqref{cond_basic0}-\eqref{condition1b} and \eqref{separated_cond_1st} for some $\gs(b )>0$.

Here we remark that in practice $\phi'_k(t), 1\le k\le K$ are unknown.  
However both the condition in \eqref{freq_resolution} considered in the seminal paper \cite{Daub_Lu_Wu11} on SST and that in \eqref{separated_cond_1st} involve $\phi'_k(t)$. Like paper \cite{Daub_Lu_Wu11}, the main goal of our paper is to establish theoretical theorems which guarantee the recovery of components, namely, we provide conditions under which the components can be recovered. These conditions involve some properties 
of $x_k(t)$ including $\phi'_k(t)$ and even $\phi''_k(t)$ in the next section. 


From \eqref{rem0_est}, we have that
\begin{equation}
\label{rem0_est_k}
{|\rem_0(a, b)|}\le \Lambda_k(b)M(b),  \; \hbox{for $(a, b)\in Z_k$}, 
\end{equation}
where 
\begin{equation}
\label{def_gLk}
\gL_k(b):=\vep_1  I_1 \frac{\mu \gs(b)+\ga}{\phi'_k(b)} +\pi  \vep_2 I_2
\Big(\frac{\mu \gs(b)+\ga}{\phi'_k(b)}\Big)^2. 
\end{equation}
Recall that we assume that the scale variable $a$ lies in the interval \eqref{a_interval}. 
Throughout this section, we may simply assume that
\begin{equation}
\label{def_a2}
a_1(b)=\frac{\mu-\ga/\gs(b)}{\phi_K\rq{}(b)}\le a\le a_2(b)=
\frac{\mu+\ga/\gs(b)}{\phi_1\rq{}(b)}.
\end{equation}
Hence we have for $a\in [a_1, a_2]$, 
\begin{equation}
\label{rem0_est_all}
{|\rem_0(a, b)|}\le M(b)\Lambda_1(b). 
\end{equation}

From \eqref{separated_cond_1st0}, we have 
$$
\frac {\phi'_k(b)}{\phi'_\ell(b)}\ge \Big(\frac {\mu\gs(b)+\ga}{\mu\gs(b)-\ga}\Big)^{k-\ell}, \; \hbox{if $\ell<k$}, 
$$
and 
$$
\frac {\phi'_k(b)}{\phi'_\ell(b)}\le \Big(\frac {\mu\gs(b)+\ga}{\mu\gs(b)-\ga}\Big)^{\ell-k}, \; \hbox{if $\ell>k$}.  
$$
Thus for $(a, b)\in Z_\ell$, that is $\mu \gs(b)-\ga < \gs(b) a\phi'_\ell(b) < \mu \gs(b)+\ga$, we have 
\begin{equation}
\label{ineq_Z_ell}
\begin{array}{l}
\gs(b)\big(a \phi'_k(b) -\mu\big)>  \frac {(\mu\gs(b)+\ga)^{k-\ell}}{(\mu\gs(b)-\ga)^{k-\ell-1}}-\mu \gs(b) \; \hbox{for $\ell<k$}, \\
\gs(b)\big(\mu-a \phi'_k(b)\big)> \mu\gs(b)- \frac {(\mu\gs(b)-\ga)^{\ell-k}}{(\mu\gs(b)+\ga)^{\ell-k-1}} \; \hbox{for $\ell>k$}. 
\end{array}
\end{equation}
Denote 
$$
\gamma_{\ell, k}(b):=\begin{cases}
\frac {(\mu\gs(b)+\ga)^{k-\ell}}{(\mu\gs(b)-\ga)^{k-\ell-1}}-\mu \gs(b) \; \hbox{if $\ell<k$}, \\
\mu\gs(b)- \frac {(\mu\gs(b)-\ga)^{\ell-k}}{(\mu\gs(b)+\ga)^{\ell-k-1}} \; \hbox{if $\ell>k$}. 
\end{cases}
$$
Then by \eqref{ineq_Z_ell}, we have 
\begin{equation}
\label{ineq_Z_ell_gga_ellk}
\big|\gs(b)(\mu -a \phi'_k(b))\big|> \gga_{\ell,  k}(b) \; \hbox{for any $(a, b)\in Z_\ell, k\not=\ell$}.   
\end{equation}


In this section we assume the window function $g$ satisfies the following condition. 

{\bf Assumption 1.} \;  {\it $|\wh g(\xi)|$ can be written as  
\begin{equation}
\label{def_f}
|\wh g(\xi)|=f(|\xi|), 
\end{equation}
where $f(\xi)$ is a positive and {\rm (}strictly{\rm )} decreasing function on $\xi\ge 0$. }

Denote 
 \begin{equation}
 \label{def_err}
 \terr_\ell(b):=M(b)\gL_\ell(b)+\sum_{k\ne \ell}A_k(b) f(\gga_{\ell, k}), 
 \end{equation}
where, as mentioned in Section 1, $\sum_{k\not= \ell}$ denotes $\sum_{\{k: ~ k\not= \ell, 1\le k\le K\}}$. 

\begin{lem}
Let $x(t)\in {\cal C}_{\vep_1, \vep_2}$ and $\wt W_x(a, b)$ be its adaptive CWLT with a window function $g$ satisfying Assumption 1. Then for any $(a, b)\in Z_\ell$, 
\begin{equation}
\label{est_xk_CWT}
 \big|\wt W_x(a, b)-x_\ell(b) \wh g\big(\gs(b)(\mu -a \phi'_\ell(b)) \big)\big|\le \terr_\ell(b). 
\end{equation}
 \end{lem}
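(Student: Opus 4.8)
The plan is to start from the expansion \eqref{CWT_approx_1st}, which writes $\wt W_x(a,b)$ as $\sum_{k=1}^K x_k(b)\,\wh g\big(\gs(b)(\mu-a\phi_k'(b))\big)+\rem_0$. Isolating the term $k=\ell$, the left-hand side of \eqref{est_xk_CWT} is bounded by
\[
\Big|\sum_{k\ne\ell} x_k(b)\,\wh g\big(\gs(b)(\mu-a\phi_k'(b))\big)\Big| + |\rem_0(a,b)|.
\]
So the estimate reduces to controlling these two pieces separately on the zone $Z_\ell$.

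For the remainder term, I would invoke \eqref{rem0_est_k}: since $(a,b)\in Z_\ell$, we have $|\rem_0(a,b)|\le M(b)\gL_\ell(b)$, which is exactly the first summand in the definition \eqref{def_err} of $\terr_\ell(b)$. (This uses the parametrization \eqref{def_a2} of the scale interval, together with the monotone bound $\gL_\ell(b)$ coming from \eqref{rem0_est} and \eqref{def_gLk}, applied at a point of $Z_\ell$.) For the cross terms, for each $k\ne\ell$ I would bound $|x_k(b)|=A_k(b)$ and then estimate $\big|\wh g\big(\gs(b)(\mu-a\phi_k'(b))\big)\big|$. By Assumption 1 this equals $f\big(|\gs(b)(\mu-a\phi_k'(b))|\big)$, and since $f$ is strictly decreasing on $[0,\infty)$, the inequality \eqref{ineq_Z_ell_gga_ellk}, which says $|\gs(b)(\mu-a\phi_k'(b))|>\gga_{\ell,k}(b)$ for $(a,b)\in Z_\ell$, gives $f\big(|\gs(b)(\mu-a\phi_k'(b))|\big)< f(\gga_{\ell,k}(b))$. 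Summing over $k\ne\ell$ yields $\sum_{k\ne\ell}A_k(b)f(\gga_{\ell,k})$, the second summand in \eqref{def_err}.

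Adding the two bounds gives precisely $\terr_\ell(b)$, completing the argument. The only slightly delicate point is making sure the separation inequalities \eqref{ineq_Z_ell}–\eqref{ineq_Z_ell_gga_ellk} are genuinely available: they were derived under \eqref{separated_cond_1st0}, i.e. under the well-separated hypothesis $\gs(b)$ satisfying \eqref{separated_cond_1st} for $2\le k\le K$, which holds because $x(t)\in{\cal C}_{\vep_1,\vep_2}$. One should also check that $\gga_{\ell,k}(b)>0$ so that applying $f$ is legitimate; this follows from $\mu\gs(b)>\ga$, which is implied by \eqref{separated_cond_1st}. I expect the main (modest) obstacle to be bookkeeping: keeping the chain of inequalities \eqref{ineq_Z_ell} correctly oriented for the two cases $\ell<k$ and $\ell>k$, and verifying that the geometric-ratio estimates on $\phi_k'(b)/\phi_\ell'(b)$ translate into the stated lower bounds on $|\gs(b)(\mu-a\phi_k'(b))|$ uniformly over $Z_\ell$. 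Everything else is a direct substitution of previously established bounds.
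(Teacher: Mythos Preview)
Your proposal is correct and follows essentially the same approach as the paper's proof: start from the expansion \eqref{CWT_approx_1st}, isolate the $k=\ell$ term, bound $|\rem_0|$ on $Z_\ell$ via \eqref{rem0_est_k}, and bound each cross term using Assumption~1 together with \eqref{ineq_Z_ell_gga_ellk} and the monotonicity of $f$. The additional remarks you make about the availability of the separation inequalities and the positivity of $\gga_{\ell,k}(b)$ are sound sanity checks that the paper leaves implicit.
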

 
 \begin{proof}
 For any $(a, b)\in Z_\ell$, from \eqref{CWT_approx_1st}, we have 
\begin{eqnarray*}
\nonumber && \big|\wt W_x(a, b)-x_\ell(b) \wh g\big(\gs(b)(\mu -a \phi'_\ell(b)) \big)|\\
&&=\Big|\sum_{k\ne \ell} x_k(b)  \wh g\big(\gs(b)(\mu -a \phi'_k(b))\big) +\rem_0\Big|\\
&&\le \big|\rem_0\big|+\sum_{k\ne \ell} A_k(b)  f\big(|\gs(b)(\mu -a \phi'_k(b))\big|\big) \\
&&\le  M(b)\gL_\ell(b)+\sum_{k\ne \ell} A_k(b) f(\gga_{\ell, k}), 
\end{eqnarray*}
where the last inequality follows from \eqref{rem0_est_k}, \eqref{ineq_Z_ell_gga_ellk} and the assumption that $f(\xi)$ is  decreasing for $\xi\ge 0$. 
\end{proof}

Note that $\gga_{\ell, k}\ge \ga$, where $\ga$ is defined by \eqref{def_ga_general}, that is $f(\ga)=\tau_0$. Thus we have that 
 \begin{equation}
\label{ineq_Z_ell_gga_ellk_g}
\big|\wh g\big(\gs(b)(\mu -a \phi'_k(b))\big)\big|\le  f(\gga_{\ell, k}) \le f(\ga)= \tau_0 \; \hbox{for any $(a, b)\in Z_\ell, k\not=\ell$};  
\end{equation}
and hence, 
\begin{equation}
 \label{err_ineq}
 \terr_\ell(b)\le M(b)\gL_\ell(b)+\tau_0 \sum_{k\ne \ell}A_k(b).
 \end{equation}

For a fixed $b$ and a positive $\wt \ep_1$ (possibly depending on $b$), 
we let $\cG_b$ and $\cG_{b, k}$ denote the sets defined by 
\begin{equation}
\label{def_cGk}
\cG_b:=\{a\in [a_1, a_2]: \; |\wt W_x(a, b)|>\wt \ep_1\}, 
\; \cG_{b, k}:=\{a \in \cG_b: \; |\mu -a \phi_k'(b)|< \frac {\ga}{\gs(b)}, b\in \R\}.  
\end{equation}
Note that $\cG_b$ and $\cG_{b, k}$ depend on $\wt \ep_1$, and for simplicity of presentation, we drop $\wt \ep_1$ from them. 

Let $\wh a_\ell$ be defined by \eqref{def_max_eta}. Observe that $\cG_{b, k}=\cG_b\cap \{a: \; (a, b)\in Z_ k\}$. Thus $\cG_{b, k}, 1\le k\le K$ are disjoint since 
 $Z_k$ are not overlapping. In addition, we will show in the next theorem that each $\cG_{b, k}$ is non-empty. 
 Thus the definition for $\wh a_\ell$ in \eqref{def_max_eta} makes sense.

Next we present our analysis results on adaptive CWLT-based IF estimation and component recovery derived from sinusoidal signal local approximation. 
\begin{theo}
\label{theo:main_1st} Let $x(t)\in {\cal C}_{\vep_1, \vep_2}$ for some $\vep_1, \vep_2>0$, $g$ be a window function satisfying Assumption 1 and  $\gs(b)>0$ be a function satisfying \eqref{separated_cond_1st}. 
Suppose $\vep_1, \vep_2, \tau_0$ are small enough such that 
\begin{equation}
\label{theo1_cond1}
2M(b)\big(\tau_0+\gL_1(b)\big)\le \nu(b). 
\end{equation} 
Let $\wt \ep_1=\wt \ep_1(b)>0$ be a function of $b$ satisfying   
\begin{equation}
\label{cond_ep1}
M(b)\big(\tau_0+\gL_1(b)\big)\le 
\wt \ep_1 \le \nu(b)-M(b)\big(\tau_0+\gL_1(b)\big).
\end{equation}
Then the following statements hold.
\begin{enumerate}
\item[{\rm (a)}] Let $\cG_b$ and $\cG_{b, k}$ be the sets defined by \eqref{def_cGk} for some $\wt \ep_1$ satisfying \eqref{cond_ep1}.   Then $\cG_b$ can be expressed as a disjoint union of exactly $K$ non-empty sets $\cG_{b, k}, 1\le k\le K$.

\item[{\rm (b)}] Let $\wh a_\ell$ be defined by \eqref{def_max_eta}.  Then for $\ell=1, 2, \cdots, K$, 
\begin{equation}\label{phi_est}
|\mu-\wh a_\ell(b)\phi_{\ell}'(b)|\le \bd_{1, \ell}:=\frac{1}{\gs(b)} f^{-1}\big(1-2 \; \terr_\ell(b)/A_\ell(b)\big), 
\end{equation}
where $\terr_\ell$ is defined by \eqref{def_err}. 
\item[{\rm (c)}]  For $\ell=1, 2, \cdots, K$,
\begin{equation}
\label{comp_xk_est}
\big|\wt W_x(\wh a_\ell, b)-x_\ell(b)\big|
\le\bd_{2, \ell}:= \terr_\ell(b)+2\pi I_1 A_\ell(b) f^{-1}\big(1-2 \; \terr_\ell(b)/A_\ell(b)\big). 
\end{equation}
\item[{\rm (d)}] 
 For $\ell=1, 2, \cdots, K$,
\begin{equation}
\label{abs_IA_est}
\big| |\wt W_x(\wh a_\ell, b)|-A_{\ell}(b) \big|\le \terr_\ell(b). 
\end{equation} 
\end{enumerate}
\end{theo}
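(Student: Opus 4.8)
The plan is to build everything on the single estimate of Lemma 2, namely that for $(a,b)\in Z_\ell$ we have $\bigl|\wt W_x(a,b)-x_\ell(b)\wh g\bigl(\gs(b)(\mu-a\phi'_\ell(b))\bigr)\bigr|\le \terr_\ell(b)$, together with the structural facts that the zones $Z_k$ are disjoint under \eqref{separated_cond_1st} and that $|\wh g(\xi)|=f(|\xi|)$ with $f$ positive and strictly decreasing. For part (a), I would argue that on $Z_\ell$ the triangle inequality gives $|\wt W_x(a,b)|\ge A_\ell(b)f\bigl(\gs(b)(\mu-a\phi'_\ell(b))\bigr)-\terr_\ell(b)$; evaluating at the center $a=\mu/\phi'_\ell(b)$ where $f$ attains its maximum $f(0)=|\wh g(0)|=1$ (recall $\int g=1$), this is at least $A_\ell(b)-\terr_\ell(b)\ge \nu(b)-M(b)(\tau_0+\gL_1(b))\ge \wt\ep_1$ using \eqref{err_ineq} and \eqref{cond_ep1}, so each $\cG_{b,k}$ is non-empty. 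Conversely, for $a\in[a_1,a_2]$ lying in no zone $Z_k$, every term $|x_k(b)\wh g(\cdots)|\le A_k(b)\tau_0$, so $|\wt W_x(a,b)|\le M(b)\tau_0+|\rem_0|\le M(b)(\tau_0+\gL_1(b))\le\wt\ep_1$ by \eqref{rem0_est_all}, hence such $a\notin\cG_b$; combined with the disjointness of the $Z_k$ already noted in the text, this gives $\cG_b=\bigsqcup_k\cG_{b,k}$ with each piece non-empty.

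For part (b), the key is a lower bound on $|\wt W_x(\wh a_\ell,b)|$ and an upper bound on the competing $f$-value. Since $\wh a_\ell$ maximizes $|\wt W_x(\cdot,b)|$ over $\cG_{b,\ell}$ and the center $\mu/\phi'_\ell(b)$ lies in $\cG_{b,\ell}$ by the computation above, we get $|\wt W_x(\wh a_\ell,b)|\ge A_\ell(b)-\terr_\ell(b)$. On the other hand, applying Lemma 2 at $a=\wh a_\ell$ gives $|\wt W_x(\wh a_\ell,b)|\le A_\ell(b)f\bigl(\gs(b)|\mu-\wh a_\ell\phi'_\ell(b)|\bigr)+\terr_\ell(b)$. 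Chaining the two inequalities yields $f\bigl(\gs(b)|\mu-\wh a_\ell\phi'_\ell(b)|\bigr)\ge 1-2\terr_\ell(b)/A_\ell(b)$, and since $f$ is strictly decreasing (hence invertible on its range) I would invert to obtain $\gs(b)|\mu-\wh a_\ell\phi'_\ell(b)|\le f^{-1}\bigl(1-2\terr_\ell(b)/A_\ell(b)\bigr)$, which is \eqref{phi_est}. One must check that the argument $1-2\terr_\ell(b)/A_\ell(b)$ is actually in the range of $f$ — this follows because $\terr_\ell(b)\le M(b)(\tau_0+\gL_1(b))\le\nu(b)/2\le A_\ell(b)/2$ using \eqref{err_ineq}, \eqref{theo1_cond1} and monotonicity of $\gL$, so the argument is non-negative and at most $f(0)=1$.

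For part (c), write $\wt W_x(\wh a_\ell,b)-x_\ell(b)=\bigl[\wt W_x(\wh a_\ell,b)-x_\ell(b)\wh g(\gs(b)(\mu-\wh a_\ell\phi'_\ell(b)))\bigr]+x_\ell(b)\bigl[\wh g(\gs(b)(\mu-\wh a_\ell\phi'_\ell(b)))-1\bigr]$. The first bracket is bounded by $\terr_\ell(b)$ by Lemma 2. For the second, I would use $\wh g(\xi)-1=\int_\R g(t)(e^{-i2\pi\xi t}-1)dt$ so $|\wh g(\xi)-1|\le 2\pi|\xi|\int|t g(t)|dt=2\pi I_1|\xi|$, apply this with $\xi=\gs(b)(\mu-\wh a_\ell\phi'_\ell(b))$, and bound $\gs(b)|\mu-\wh a_\ell\phi'_\ell(b)|$ by part (b). Since $|x_\ell(b)|=A_\ell(b)$, this gives the $2\pi I_1 A_\ell(b)f^{-1}(\cdots)$ term and hence \eqref{comp_xk_est}. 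Part (d) is the cleanest: from Lemma 2 and the reverse triangle inequality, $\bigl||\wt W_x(\wh a_\ell,b)|-|x_\ell(b)\wh g(\gs(b)(\mu-\wh a_\ell\phi'_\ell(b)))|\bigr|\le\terr_\ell(b)$, and since $|x_\ell(b)\wh g(\cdots)|=A_\ell(b)f(\gs(b)|\mu-\wh a_\ell\phi'_\ell(b)|)$ lies between $A_\ell(b)f(\ga)\cdot 0$... — more carefully, I would instead note $|\wt W_x(\wh a_\ell,b)|\le A_\ell(b)+\terr_\ell(b)$ directly from Lemma 2 plus $f\le 1$, and $|\wt W_x(\wh a_\ell,b)|\ge A_\ell(b)-\terr_\ell(b)$ from the maximality argument in (b), which together give \eqref{abs_IA_est}.

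The main obstacle, and the only genuinely delicate point, is verifying that the quantity $1-2\terr_\ell(b)/A_\ell(b)$ passed to $f^{-1}$ lies in $(0,1]$ so that the inversion is legitimate and the bounds are meaningful; this is exactly what the smallness hypothesis \eqref{theo1_cond1} on $\vep_1,\vep_2,\tau_0$ is engineered to guarantee, via the chain $\terr_\ell(b)\le M(b)\gL_\ell(b)+\tau_0\sum_{k\ne\ell}A_k(b)\le M(b)(\gL_1(b)+\tau_0)\le\tfrac12\nu(b)\le\tfrac12 A_\ell(b)$, where monotonicity of $\gL_k(b)$ in $k$ (from $\phi'_1\le\cdots\le\phi'_K$ in \eqref{def_gLk}) is used. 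Everything else is triangle-inequality bookkeeping and the elementary estimate $|\wh g(\xi)-1|\le 2\pi I_1|\xi|$.
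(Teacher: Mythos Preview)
Your proposal is correct and follows essentially the same approach as the paper's proof: both use Lemma~2 as the central estimate, establish (a) by checking the center $a=\mu/\phi'_\ell(b)$ lies in $\cG_{b,\ell}$ and that points outside all zones fail the threshold, prove (b) by sandwiching $|\wt W_x(\wh a_\ell,b)|$ between $A_\ell(b)-\terr_\ell(b)$ (via maximality) and $A_\ell(b)f(\cdots)+\terr_\ell(b)$ (via Lemma~2) then inverting $f$, derive (c) by splitting off the $\wh g(\xi)-1$ term and bounding it by $2\pi I_1|\xi|$, and get (d) from the same two-sided bound on $|\wt W_x(\wh a_\ell,b)|$. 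The well-definedness check for $f^{-1}$ that you flag is exactly the content of the paper's Remark~1; one cosmetic point is that the non-emptiness step in (a) actually needs the \emph{strict} inequality $|\wt W_x(\mu/\phi'_\ell(b),b)|>\wt\ep_1$ (since $\cG_b$ is defined with a strict threshold), which follows because $\tau_0\sum_{k\ne\ell}A_k(b)<\tau_0 M(b)$ --- the paper makes this explicit and your chain of inequalities can be sharpened in the same way.
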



\begin{mrem}\label{rem1}
Here we remark that $\terr_\ell(b)< \frac 12 A_\ell(b)$ if   $2M(b)\big(\tau_0+\gL_\ell(b)\big)\le  \nu(b)$, and hence, $f^{-1}\big(1-2 \; \terr_\ell(b)/A_\ell(b)\big)$ is well defined. Indeed, from \eqref{err_ineq}, 
\begin{eqnarray*}
 \terr_\ell(b)\hskip -0.6cm&& \le M(b)\gL_\ell(b)+\tau_0 \sum_{k\ne \ell}A_k(b)
\le \frac 12 \nu(b)-M(b)\tau_0+\tau_0 \sum_{k\ne \ell}A_k(b)\\
&& < \frac 1 2 A_\ell(b)-M(b)\tau_0+\tau_0 \sum_{k=1}^KA_k(b)= \frac 1 2 A_\ell(b). 
\end{eqnarray*}
\hfill $\square$ 
\end{mrem}

From \eqref{comp_xk_est}, we know the  recovery formula for a complex signal is   \eqref{comp_xk_est0}, while  for a real-valued $x(t)$, the  recovery formula for component $x_\ell(t)$ will be  
\begin{equation}
\label{comp_xk_est_real}
x_\ell(t) \approx 2{\rm Re}\Big(\wt W_x(\wh a_\ell, b)\Big). 
\end{equation}

When $\wh g(\xi)$ is supported in $[-\ga, \ga]$, we can set $\tau_0$ in Theorem \ref{theo:main_1st} to be zero.  Thus  the condition in \eqref{theo1_cond1} is reduced to  $2M(b)\gL_1(b)\le \nu(b)$. In addition, the error $\terr_\ell(b)$ in \eqref{def_err} is simply $M(b)\gL_\ell(b)$.  To summarize, we have the following corollary, where 
$f$ is the decreasing function on $[0, \ga]$ with $f(\xi)=\wh g(|\xi|)$.   

\begin{cor}
\label{cor:main_1st} Let $x(t)\in {\cal C}_{\vep_1, \vep_2}$ for some $\vep_1, \vep_2>0$ and 
$g$ be a window function satisfying Assumption 1 and that supp($\wh g)\subseteq [-\ga, \ga]$. 
Suppose $\gs(b)>0$ satisfies \eqref{separated_cond_1st}. 
If $\vep_1, \vep_2$ are small enough such that $2M(b)\gL_1(b)\le \nu(b)$, then we have the following statements.
\begin{enumerate}
\item[{\rm (a)}]  Let $\cG_b$ and $\cG_{b, k}$ be the sets defined by \eqref{def_cGk} for some $\wt \ep_1$ satisfying  $M(b)\gL_1(b)\le 
\wt \ep_1 \le \nu(b)-M(b)\gL_1(b)$.    Then $\cG_b$ can be expressed as a disjoint union of exactly $K$ non-empty sets $\cG_{b, k}, 1\le k\le K$. 

\item[{\rm (b)}] Let $\wh a_\ell$ be defined by \eqref{def_max_eta}.  Then for $\ell=1, 2, \cdots, K$,
\begin{equation}\label{cor_phi_est}
|\mu-\wh a_\ell(b)\phi'_\ell(b)|\le \frac{1}{\gs(b)} f^{-1}\big(1-\frac {2 \; M(b)\gL_\ell(b)}{A_\ell(b)}\big).  
\end{equation}
\item[{\rm (c)}] For $\ell=1, 2, \cdots, K$,
\begin{equation}
\label{cor_comp_xk_est}
\big|\wt W_x(\wh a_\ell, b)-x_\ell(b)\big|
\le M(b)\gL_\ell(b)+2\pi I_1 A_\ell(b) f^{-1}\big(1-\frac {2 \; M(b)\gL_\ell(b)}{A_\ell(b)}\big). 
\end{equation}
\item[{\rm (d)}] 
For $\ell=1, 2, \cdots, K$,
\begin{equation}
\label{cor_abs_IA_est}
\big| |\wt W_x(\wh a_\ell, b)|-A_{\ell}(b) \big|\le M(b)\gL_\ell(b). 
\end{equation} 
\end{enumerate}
\end{cor}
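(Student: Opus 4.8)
The plan is to obtain Corollary~\ref{cor:main_1st} as the $\tau_0=0$ specialization of Theorem~\ref{theo:main_1st}, which applies essentially verbatim once the compact-support hypothesis is shown to license this choice of threshold. First I would observe that when $\hbox{supp}(\wh g)\subseteq[-\ga,\ga]$, the number $\ga$ defined implicitly through $|\wh g(\ga)|=\tau_0$ in \eqref{def_ga_general} may be taken as the right endpoint of the support with $\tau_0=0$, so that $f(\ga)=0$ and $|\wh g(\xi)|=0$ for $|\xi|\ge\ga$. Reading $f$ as the strictly decreasing function on $[0,\ga]$ with $f(\xi)=\wh g(|\xi|)$ and $f(\ga)=0$, Assumption~1 continues to hold on $[0,\ga)$, and the zones $Z_k$ of \eqref{def_Zk} coincide exactly with $\{(a,b):|\mu-a\phi_k'(b)|<\ga/\gs(b)\}$ rather than merely capturing the transform essentially.

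The key simplification is that the inter-component cross-terms vanish identically on each zone. For $(a,b)\in Z_\ell$ and $k\ne\ell$, the separation estimate \eqref{ineq_Z_ell_gga_ellk} gives $|\gs(b)(\mu-a\phi_k'(b))|>\gga_{\ell,k}(b)\ge\ga$, whence $\wh g\big(\gs(b)(\mu-a\phi_k'(b))\big)=0$ exactly. Consequently, in the decomposition \eqref{CWT_approx_1st} restricted to $Z_\ell$, only the $k=\ell$ principal term survives, and the general error $\terr_\ell(b)$ of \eqref{def_err} collapses: since $f(\gga_{\ell,k})\le f(\ga)=0$ and $f\ge0$, the sum $\sum_{k\ne\ell}A_k(b)f(\gga_{\ell,k})$ is zero, so $\terr_\ell(b)=M(b)\gL_\ell(b)$.

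With these two reductions in hand, I would feed $\tau_0=0$ and $\terr_\ell(b)=M(b)\gL_\ell(b)$ into the four conclusions of Theorem~\ref{theo:main_1st}. The smallness hypothesis \eqref{theo1_cond1} becomes $2M(b)\gL_1(b)\le\nu(b)$, and the admissible range \eqref{cond_ep1} for the threshold $\wt\ep_1$ becomes $M(b)\gL_1(b)\le\wt\ep_1\le\nu(b)-M(b)\gL_1(b)$, which is precisely the hypothesis of part~(a); the conclusion that $\cG_b$ splits as a disjoint union of $K$ non-empty sets then transfers directly. Substituting $\terr_\ell(b)=M(b)\gL_\ell(b)$ into \eqref{phi_est}, \eqref{comp_xk_est}, and \eqref{abs_IA_est} yields \eqref{cor_phi_est}, \eqref{cor_comp_xk_est}, and \eqref{cor_abs_IA_est} for parts~(b)--(d), respectively.

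The only point requiring care---rather than a genuine obstacle---is the boundary behavior at $\xi=\ga$: Assumption~1 demands $f$ strictly positive and decreasing, whereas the compact-support setting forces $f(\ga)=0$. I would resolve this by using strict positivity and invertibility of $f$ only on $[0,\ga)$, so that $f^{-1}$ applied to values in $(0,1]$ stays well-defined, as guaranteed by Remark~\ref{rem1} which supplies $\terr_\ell(b)<\tfrac12 A_\ell(b)$; the identity $f(\ga)=0$ is invoked solely to kill the cross-terms. No estimate beyond those already established for the Theorem is needed.
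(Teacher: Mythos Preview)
Your proposal is correct and follows essentially the same route as the paper: the authors obtain the corollary by setting $\tau_0=0$ in Theorem~\ref{theo:main_1st}, which forces $\terr_\ell(b)=M(b)\gL_\ell(b)$ and reduces the hypotheses and bounds exactly as you describe. Your extra care about Assumption~1 at the boundary $\xi=\ga$ is a welcome clarification that the paper glosses over, but it does not change the argument.
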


\bigskip 

\begin{example}\label{example1}
Let $g(t)$ be the Gaussian window function given in \eqref{def_g}. Then $\wh g(\xi)=e^{-2\pi^2 \xi^2}$.  Hence $f(\xi)=e^{-2\pi^2 \xi^2}$. 
Thus the terms $f(\gga_{\ell, k})$ in $\terr_\ell(b)$ defined by \eqref{def_err} are 
$$
e^{-2\pi^2 \gga_{\ell, k}^2}, 
$$
which are very small if $\ga\ge 1$. For this $g$, we have 
$$
f^{-1}(\xi)=\wh g(\xi)^{-1}= \frac1{\pi \sqrt 2} \sqrt{-\ln \xi}, \;  0<\xi <1. 
$$
Hence the error bound $\bd_{1, \ell}$ in \eqref{phi_est} is 
$$
\bd_{1, \ell}=\frac{1}{\gs(b)} f^{-1}\big(1-2 \; \terr_\ell(b)/A_\ell(b)\big)=
\frac{1}{\gs(b)\pi \sqrt 2} \sqrt{-\ln \big(1-2 \; \terr_\ell(b)/A_\ell(b)\big)}. 
$$
Assume $\vep_1, \vep_2, \tau_0$ are small enough such that $2 \; \terr_\ell(b)/A_\ell(b)\le c_0$ for some $0<c_0<1$. 
Then using the fact $-\ln(1-t)< \frac 1{1-c_0}t$ for $0<t\le c_0$, we have 
\begin{equation*}
\bd_{1, \ell} < \frac1{\gs(b)\pi  \sqrt{1-c_0}} \sqrt{{\terr_\ell(b)}/{A_\ell(b)}}.  
\end{equation*}

In this case the error bound $\bd_{2, \ell}$ in \eqref{comp_xk_est} for component recovery satisfies 
$$
\bd_{2, \ell}<\terr_\ell(b)+\frac{2I_1}{\sqrt{1-c_0}} \sqrt{A_\ell(b)\terr_\ell(b)}. 
$$
We see a  smaller  $\gs(b)$ results in a smaller error bound $\bd_{2, \ell}$ for component recovery. 
\hfill $\blacksquare$
\end{example}

\bigskip

\bigskip

Next we give the proof of Theorem \ref{theo:main_1st}. 


{\bf Proof  of  Theorem \ref{theo:main_1st}(a)}.  Clearly $\cup _{k=1}^K \cG_{b, k}\subseteq \cG_b$. 
Next we show  $\cG_b\subseteq \cup _{k=1}^K \cG_{b, k}$.  Let $a \in \cG_b$. 
Assume $a\not \in \cup _{k=1}^K \cG_{b, k}$. That is $(a, b)\not \in \cup _{k=1}^K Z_k$.  
Then by the definition of $Z_k$ in \eqref{def_Zk}, we have  
$|\wh g\big(\gs(b)(\mu-a\phi'_k(t))\big)|\le \tau_0$. Hence, by \eqref{CWT_approx_1st} and \eqref{rem0_est_all}, we have 
\begin{eqnarray*}
|\wt W_x(a, b)|\hskip -0.6cm &&\le \sum_{k=1}^K \big|x_k(b) \wh g\big(\gs(b)(\mu-a\phi'_k(b))\big)\big| + |\rem_0|
\\
&&\le \tau _0 \sum_{k=1}^K A_k(b)  +M(b)\gL_1(b)\le \wt \ep_1, 
\end{eqnarray*}
a contradiction to the assumption $|\wt W_x(a, b)|>\wt \ep_1$. Thus $(a, b)\in Z_\ell$ for some $\ell$. 
This shows that $a \in \cG_{b, \ell}$. Hence $\cG_b=\cup _{k=1}^K \cG_{b, k}$. 
Since $Z_k, 1\le k\le K$ are not overlapping, we know  $\cG_{b, k}, 1\le k\le K$ are disjoint.  

To show that $\cG_{b, \ell}$ is non-empty, 
it is enough to show $\frac\mu{\phi_\ell\rq{}(b)}\in \cG_b$ which implies $\frac\mu{\phi_\ell\rq{}(b)}\in \cG_{b, \ell}$ since 
$(\frac\mu{\phi_\ell\rq{}(b)}, b)\in Z_\ell$.   From \eqref{est_xk_CWT} with $a=\frac \mu{\phi'_\ell(b)}$, we have 
\begin{equation}
\label{est_xk_CWT1}
\big|\wt W_x(\frac \mu{\phi'_\ell(b)}, b)\big|\ge |x_\ell(b) \wh g(0)|-\terr_\ell(b) =A_\ell(b) - \terr_\ell(b),   
\end{equation}
since $\wh g(0)=1$.  This and \eqref{err_ineq} lead to 
\begin{eqnarray*}
|\wt W_x(\frac\mu{\phi_\ell\rq{}(b)}, b)|\hskip -0.6cm 
&&\ge  A_\ell(b) -M(b)\gL_\ell(b)-\tau_0 \sum_{k\ne \ell}A_k(b)\\
&& >  A_\ell(b)   -M(b)\gL_\ell(b)- M(b)\tau_0\\
&& \ge  \nu(b) - M(b)\big(\tau_0 +\gL_1(b)\big) \ge \wt \ep_1.  
\end{eqnarray*}
Thus $\frac\mu{\phi_\ell\rq{}(b)}\in \cG_b$. This completes the proof of  Theorem \ref{theo:main_1st}(a). 
\hfill $\square$


{\bf Proof  of  Theorem \ref{theo:main_1st}(b)}. 
By the definition of $\wh a_\ell$ and \eqref{est_xk_CWT}, we have 
$$
\big|\wt W_x(\frac \mu{\phi'_\ell(b)}, b)\big| \le \big|\wt W_x(\wh a_\ell, b)\big| \le 
\big|x_\ell(b) \wh g\big(\gs(b)(\mu -\wh a_\ell \; \phi'_\ell(b)) \big)\big|+ \terr_\ell(b)
$$
This, together with \eqref{est_xk_CWT1}, implies 
$$
A_\ell(b) - \terr_\ell(b)\le A_\ell(b) \big|\wh g\big(\gs(b)(\mu -\wh a_\ell \; \phi'_\ell(b)) \big)\big|+ \terr_\ell(b), 
$$
or equivalently
$$
0<1- 2 \; \terr_\ell(b)/A_\ell(b)\le  f\big(|\gs(b)(\mu -\wh a_\ell \; \phi'_\ell(b)) |\big).  
$$
Then \eqref{phi_est} follows from the above inequality and that $f(\xi)$ is decreasing on $(0, \infty)$. 
\hfill $\square$


{\bf Proof  of  Theorem \ref{theo:main_1st}(c)}.
From \eqref{est_xk_CWT}, we have 
\begin{eqnarray*}
&&\big|\wt W_x(\wh a_\ell, b)-x_\ell(b)\big|\\
&&\le \big|\wt W_x(\wh a_\ell, b)-x_\ell(b) \wh g\big(\gs(b)(\mu -\wh a_\ell \; \phi'_\ell(b)) \big)\big|
+\big |x_\ell(b) \wh g\big(\gs(b)(\mu -\wh a_\ell \; \phi'_\ell(b)) \big)-x_\ell(b)\big|
\\
&&\le \terr_\ell(b)+A_\ell(b) \Big| \int_\R g(t)\Big(e^{-i2\pi \gs(b)(\mu -\wh a_\ell \; \phi'_\ell(b) )t }-1\Big) dt \Big|\\
&&\le \terr_\ell(b)+A_\ell(b) \int_\R |g(t)| \; 2\pi \gs(b)\big|\mu -\wh a_\ell \; \phi'_\ell(b)\big | |t| dt\\
&&\le \terr_\ell(b)+A_\ell(b) 2\pi f^{-1}\big(1-2 \; \terr_\ell(b)/A_\ell(b)\big) \int_\R |g(t)|  |t| dt\\
&&=  \terr_\ell(b)+2\pi I_1 A_\ell(b) f^{-1}\big(1-2 \; \terr_\ell(b)/A_\ell(b)\big). 
\end{eqnarray*}
This shows \eqref{comp_xk_est}. 
\hfill $\square$

{\bf Proof of Theorem \ref{theo:main_1st}(d)}. By the definition of $\wh a_\ell$ and \eqref{est_xk_CWT1}, we have 
\begin{equation}
\label{est_xk_IA1}
\big|\wt W_x(\wh a_\ell, b)\big|\ge 
\big|\wt W_x(\frac \mu{\phi'_\ell(b)}, b)\big|\ge A_\ell(b) - \terr_\ell(b). 
\end{equation}
On the other hand, by Assumption 1, we have $|\wh g(\xi)|\le |\wh g(0)|\le 1$ for any $\xi\in \R$. This fact and 
\eqref{est_xk_CWT} imply 
 \begin{equation}
\label{est_xk_IA2}
\big|\wt W_x(\wh a_\ell, b)\big| \le 
\big|x_\ell(b) \wh g\big(\gs(b)(\mu -\wh a_\ell \; \phi'_\ell(b)) \big)\big|+ \terr_\ell(b)\le A_\ell(b) +\terr_\ell(b).
\end{equation}
\eqref{abs_IA_est} follows from \eqref{est_xk_IA1} and \eqref{est_xk_IA2}. This completes the proof  of  Theorem \ref{theo:main_1st}(d). 
\hfill $\square$

\section{Linear chirp-based method}
A signal of the form 
\begin{equation}
\label{def_chirp_At}
s(t)=A e^{i2\pi \phi(t)}=A e^{i2\pi (ct +\frac 12 r t^2)} 
\end{equation}
for some $c>0$ and non-zero number $r$ is called a linear chirp or linear frequency modulation (LFM) signal.   In this section we consider multi-component signals $x(t)$ of \eqref{AHM} with $A_k(t)$ satisfying \eqref{cond_basic0} and \eqref{condition1} for some $\vep_1>0$, 
 and $\phi_k(t)$ satisfying \eqref{cond_basic},  \eqref{cond_basic_2nd} and \eqref{condition2} for some $\vep_3>0$.

As shown below, conditions \eqref{condition1} and \eqref{condition2} imply that when $\vep_1, \vep_3$ are small,  each component $x_k(t)$ is well approximated locally by linear chirps of \eqref{def_chirp_At}. 
Indeed, write $x(b+at)$ as
\begin{equation}
\label{xm_xr}
x(b+a t)=x_{\rm m}(a,b,t)+x_{\rm r}(a,b,t),
\end{equation}
where
\begin{eqnarray*}
&&x_{\rm m}(a,b,t):=\sum_{k=1}^K x_k(b)e^{i2\pi (\phi_k'(b) at+\frac 12\phi''_k(b) (at)^2) }, \\
&&x_{\rm r}(a,b,t):=\sum_{k=1}^K \big(x_k(b+a t)-x_k(b)e^{i2\pi (\phi_k'(b) at+\frac 12\phi''_k(b) (at)^2) }\big)\; .
\end{eqnarray*}
With \eqref{xm_xr}, we have
\begin{eqnarray}
\nonumber
\wt W_x(a, b)\hskip -0.6cm &&=\sum_{k=1}^K \int_\R x_k(b+a t)\frac 1{\gs(b)}g\Big(\frac t{\gs(b)}\Big) e^{-i2\pi \mu t}dt\\
\nonumber 
&&=\sum_{k=1}^K \int_\R x_k(b)e^{i2\pi (\phi_k'(b) a t +\frac12\phi''_k(b) a^2t^2)}\frac 1{\gs(b)}g\Big(\frac t{\gs(b)}\Big) e^{-i2\pi \mu t}dt +\err_0\\
&&\label{CWT_approx}
=\sum_{k=1}^K x_k(b)  
G_k\big(\gs(b)(\mu-a \phi_k'(b) ), a, b\big)+\err_0, 
\end{eqnarray}
where $G_k(\xi, a, b)$ is defined by \eqref{def_Gk_general}, and 
\begin{eqnarray}
\label{def_err0}
&&\err_0:= \int_\R x_{\rm r}(a,b,t)\frac 1{\gs(b)}g\Big(\frac t{\gs(b)}\Big) e^{-i2\pi \mu t}dt.
\end{eqnarray}

Next lemma provides an upper bound for $\err_0$. 

\begin{lem}\label{lem:basic}
Let $x(t)$ be a signal of \eqref{AHM} with $A_k(t)$ and $\phi_k(t)$ satisfying \eqref{condition1} and \eqref{condition2} respectively. 
 Then the  adaptive CWLT $\wt W_x(a, b)$ of $x(t)$ with a window function $g$  
 can be written as \eqref{CWT_approx} with 
 \begin{equation}
\label{err0_est}
|\err_0|\le  M(b)\Pi(a, b),
\end{equation}
where
$$
\Pi(a, b):=\vep_1  I_1 a \gs(b)+\frac \pi 3 \vep_3 I_3 a^3 \gs^3(b),  
$$
with $I_n$ defined by \eqref{def_In}. 
\end{lem}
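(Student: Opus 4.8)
The plan is to follow the same route as the proof of the bound \eqref{rem0_est} on $\rem_0$, the only essential change being that each phase $\phi_k$ is now expanded to one order higher -- a quadratic Taylor polynomial with a cubic Lagrange remainder -- rather than only to first order. First I would decompose the $k$th summand of $x_{\rm r}(a,b,t)$ by adding and subtracting $A_k(b)e^{i2\pi\phi_k(b+at)}$:
\begin{eqnarray*}
x_k(b+at)-x_k(b)e^{i2\pi(\phi_k'(b)at+\frac12\phi_k''(b)a^2t^2)}
&=&\bigl(A_k(b+at)-A_k(b)\bigr)e^{i2\pi\phi_k(b+at)}\\
&&+\,A_k(b)e^{i2\pi\phi_k(b+at)}\Bigl(1-e^{-i2\pi(\phi_k(b+at)-\phi_k(b)-\phi_k'(b)at-\frac12\phi_k''(b)a^2t^2)}\Bigr),
\end{eqnarray*}
using $x_k(b)e^{i2\pi(\phi_k'(b)at+\frac12\phi_k''(b)a^2t^2)}=A_k(b)e^{i2\pi(\phi_k(b)+\phi_k'(b)at+\frac12\phi_k''(b)a^2t^2)}$.

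Next I would bound the two pieces separately. For the amplitude piece, \eqref{condition1} applied with $\tau=at$ (recall $a>0$) gives $|A_k(b+at)-A_k(b)|\le\vep_1 a|t|A_k(b)$. For the phase piece, combining $|e^{i\gth}-1|\le|\gth|$ with Taylor's theorem with Lagrange remainder -- legitimate since $\phi_k\in C^3(\R)$ by \eqref{cond_basic_2nd} -- gives, for some $\xi$ between $b$ and $b+at$,
$$\Bigl|\phi_k(b+at)-\phi_k(b)-\phi_k'(b)at-\frac12\phi_k''(b)a^2t^2\Bigr|=\frac16\bigl|\phi_k^{(3)}(\xi)\bigr|\,a^3|t|^3\le\frac16\vep_3 a^3|t|^3$$
by \eqref{condition2}, so that the phase piece is at most $\frac{\pi}{3}\vep_3 a^3|t|^3A_k(b)$. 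Summing the two bounds over $k$ and using the definition of $M(b)$ in \eqref{def_M_u} yields the pointwise estimate $|x_{\rm r}(a,b,t)|\le M(b)\bigl(\vep_1 a|t|+\frac{\pi}{3}\vep_3 a^3|t|^3\bigr)$.

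Finally I would insert this estimate into the definition \eqref{def_err0} of $\err_0$, pull $M(b)$ out of the integral, and change variables $u=t/\gs(b)$; the two resulting integrals are exactly $I_1$ and $I_3$ of \eqref{def_In}, multiplied respectively by $a\gs(b)$ and $a^3\gs^3(b)$, which gives $|\err_0|\le M(b)\bigl(\vep_1 I_1 a\gs(b)+\frac{\pi}{3}\vep_3 I_3 a^3\gs^3(b)\bigr)=M(b)\Pi(a,b)$. I do not anticipate a genuine obstacle here; the one step that needs care is the cubic Taylor remainder, where one must use the full strength of \eqref{cond_basic_2nd}--\eqref{condition2} (rather than merely $C^2$ with a bounded second derivative, as in the sinusoidal model of Section 2) so that the remainder is both well defined and controlled uniformly in $t$ by $\vep_3$. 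Every other step is the same bookkeeping as in the proof of the bound for $\rem_0$.
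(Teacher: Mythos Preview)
Your proposal is correct and follows essentially the same approach as the paper: decompose $x_{\rm r}$ into an amplitude-difference term and a phase-difference term, bound the former by \eqref{condition1} and the latter via the cubic Taylor remainder controlled by \eqref{condition2}, sum over $k$ to get $|x_{\rm r}(a,b,t)|\le M(b)\bigl(\vep_1 a|t|+\frac{\pi}{3}\vep_3 a^3|t|^3\bigr)$, and then integrate. The only cosmetic difference is that the paper writes the phase piece as $x_k(b)e^{i2\pi(\phi_k'(b)at+\frac12\phi_k''(b)(at)^2)}\bigl(e^{i2\pi(\cdots)}-1\bigr)$ while you write it as $A_k(b)e^{i2\pi\phi_k(b+at)}\bigl(1-e^{-i2\pi(\cdots)}\bigr)$; both have the same modulus, so this is immaterial.
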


\begin{proof} 
By condition \eqref{condition2}, we have 
$$
|e^{i2\pi (\phi_k(b+a t)-\phi_k(b)-\phi_k'(b) at- \frac 12\phi''_k(b) (at)^2)}-1|\le
2\pi \frac 16 \sup_{\eta \in \R}|\phi^{(3)}_k(\eta) (a t)^3|
\le \frac \pi 3 \vep_3 a^3 |t|^3.
$$
This, together with $|A_k(b+a t)-A_k(b)|\le  \vep_1 a |t| A_k(b)$, leads to  that 
\begin{eqnarray*}
|x_{\rm r}(a,b,t)|\hskip -0.6cm && =\Big|\sum_{k=1}^K \Big\{ (A_k(b+a t)-A_k(b))e^{i2\pi \phi_k(b+a t)}\\
\nonumber &&
+x_k(b)e^{i2\pi (\phi_k'(b) at+\frac 12\phi''_k(b)(at)^2) } \big(
e^{i2\pi (\phi_k(b+a t)-\phi_k(b)-\phi_k'(b) at- \frac 12\phi''_k(b) (at)^2)}-1\big)\Big\}\Big|\\
&&\le M(b)\big(\vep_1 a|t|+\frac \pi 3 \vep_3 a^3 |t|^3\big). 
\end{eqnarray*}
Thus, 
\begin{eqnarray*}
|\err_0|\hskip -0.6cm&&\le   M(b)\int_\R  \big(\vep_1 a | t|+
 \frac \pi3 \vep_3 a^3  | t|^3\big) \frac 1{\gs(b)}\Big|g\Big(\frac t{\gs(b)}\Big)\Big|
 dt \\
 &&=M(b)\big(\vep_1 I_1 a \gs(b)  +\frac \pi 3 \vep_3 I_3 a^3 \gs^3(b)\big)=M(b)\Pi(a, b). 
\end{eqnarray*}
This completes the proof of Lemma \ref{lem:basic}. 
\end{proof}

Note that as a function of $t$, $x_k(b)e^{i2\pi (\phi_k'(b) at+\frac 12\phi''_k(b) (at)^2) }$ is a linear chirp. Hence $x_{\rm m}(a, b, t)$ is a linear combination of linear chirps with variable $t$, and it approximates $x(b+at)$ well, provided that  $\vep_1, \vep_3$ are small.

By \eqref{err0_est}, we know $|\err_0|$ is small  if $\vep_1, \vep_3$ are small enough.  Hence, in this case  $G_k\big(\gs(b)(\mu-a \phi_k'(b)), a, b\big)$ determines the scale-time zone for $\wt W_{x_k}(a, b)$. 
More precisely, let $0<\tau_0<1$ be a given small number as the threshold. 
Denote
\begin{equation*}
O'_k:=
\big\{(a, b): |G_k\big(\gs(b)(\mu-a\phi_k'(b)), a, b\big)|
>\tau_0, b\in \R\big\}.
\end{equation*}
If for each fixed $a$ and $b$,  $|G_k(\xi, a , b)|$ is even in $\xi$ and decreasing for $\xi\ge 0$, 
then $O'_k$ can be written as
\begin{equation}
\label{def_Ok2_old}
O'_k=\Big\{(a, b): |\mu -a \phi_k'(b)|< \frac {\ga_k}{\gs(b)}, b\in \R\Big\}.
\end{equation}
where $\ga_k$ is obtained by solving $|G_k(\xi, a, b)|
=\tau_0$ for $\xi$.  In general $\ga_k=\ga_k(a,b)$ depends on both $b$ and $a$, and it is hard to obtain the explicit expressions for the boundaries of $O'_k$.
As suggested in \cite{LCHJJ18}, in this paper, we assume $\ga_k(a,b)$ can be replaced by $\gb_k(a,b)$ with
$\ga_k(a,b)\le \gb_k(a,b)$ such that $O'_k$ defined by \eqref{def_Ok2_old} with  $\ga_k=\gb_k(a,b)$ can be written as
\begin{equation}
\label{def_Ok2}
O_k:=\big\{(a, b): l_k(b)<a<u_k(b), b\in \R\big\},
\end{equation}
for some $0<l_k(b)<u_k(b)$, and 
\begin{equation}
\label{def_Ok2_ineq}
|G_k\big(\gs(b)(\mu-a\phi_k'(b)), a, b\big)|\le \tau_0, \; \hbox{for $(a, b)\not\in O_k$}. 
\end{equation} 
In addition, we will assume the multi-component signal $x(t)$ is well-separated, that is there is $\gs(b)$ such that $u_{k-1}(b) \le l_k(b), b\in \RR, k=2, \cdots, K$, or equivalently
\begin{equation}
\label{cond_no_overlapping}
O_k\cap O_{\ell}=\emptyset, \quad k\not=\ell.
\end{equation}
Clearly, from \eqref{err0_est}, 
we have that 
\begin{equation}
\label{rem0_est_k_2nd}
{|\err_0(a, b)|}\le M(b)\Pi_k(b),  \; \hbox{for $(a, b)\in O_k$}, 
\end{equation}
where 
\begin{equation}
\label{def_gLk_2nd}
\Pi_k(b):=\Pi(u_k(b), b)=\vep_1  I_1 u_k (b)\gs(b)+\frac \pi 3 \vep_3 I_3 u_k^3(b) \gs^3(b). 
\end{equation}


Recall that we assume that the scale variable $a$ lies in the interval \eqref{a_interval}. Of course we assume $a_1(b), a_2(b)$ satisfy 
$$
a_1(b)\le l_K(b), \; u_1(b)\le a_2(b). 
$$
Throughout this section, we simply let 
\begin{equation}
\label{def_a2_2nd}
a_1(b)=l_K(b), \; a_2(b)=u_1(b).
\end{equation}
Hence we have for $a\in [a_1, a_2]$, 
\begin{equation}
\label{rem0_est_all_2nd}
{|\err_0(a, b)|}\le M(b)\Pi_1(b). 
\end{equation}

\bigskip

As an example, let $g$ be the Gaussian function defined by  \eqref{def_g}.
Then we have (see e.g. \cite{Gibson06,LCHJJ18})
\begin{equation}
\label{def_Gk}
G_{k}(u, a, b)=\frac {1}{\sqrt{1-i2\pi \phi''_k(b)a^2\gs^2(b)}}\;
e^{-\frac{2\pi^2 u^2}{1+(2\pi \phi''_k(b)a^2\gs^2(b))^2} (1+i2\pi \phi''_k(b)a^2\gs^2(b))}.
\end{equation}
Thus
\begin{equation}
\label{abs_Gk}
|G_k(u, a, b)|=\frac 1{\big(1+(2\pi \phi''_k(b)a^2\gs^2(b))^2\big)^{\frac 14}}\;
e^{-\frac{2\pi^2}{1+(2\pi \phi''_k(b)a^2 \gs^2(b))^2}u^2}.
\end{equation}
Therefore, in this case,
assuming $\tau_0 (1+  (2\pi \phi''_k(b)a^2 \gs^2(b))^2)^{\frac 14}\le 1$ (otherwise, $|G_k(u, a, b)|< \tau_0$ for any $u$),
\begin{equation*}
\ga_k=\ga \sqrt{1+(2\pi \phi''_k(b) a^2 \gs^2(b))^2}
\; \frac 1{2\pi}\sqrt{2\ln (\frac 1{\tau_0})-\frac 12 \ln(1+(2\pi \phi''_k(b)a^2 \gs^2(b))^2)}.
\end{equation*}
Authors of \cite{LCHJJ18} replaced $\ga_k$ by
\begin{equation}
\label{def_gbk}
\gb_k=\ga\big(1+ 2\pi |\phi''_k(b)|a^2 \gs^2(b)\big),
\end{equation}
where $\ga= \frac 1{2\pi}\sqrt{2\ln (1/{\tau_0})}$ as defined by \eqref{def_ga}. 
Since $\ga_k\le \gb_k$, we know \eqref{def_Ok2_ineq} holds.  
That is $\wt W_{x_k}(a, b)$ lies within the scale-time zone: 
   \begin{equation*}
 \Big\{(a, b): |\mu -a \phi_k'(b)|< \frac {\ga}{\gs(b)}
  \Big(1+ 2\pi |\phi''_k(b)| a^2 \gs^2(b)\Big), b\in \R\Big\},
\end{equation*}
which can be written as \eqref{def_Ok2} with (see \cite{LCHJJ18})
\begin{eqnarray*}
u_k(b)\hskip -0.6cm &&
\label{zone_k_eq_big_upper}
=\frac {2(\mu+\frac \ga{\gs(b)})}{\phi'_k(b)+\sqrt{\phi'_k(b)^2-8\pi \ga (\ga+\mu \gs(b))|\phi''_k(b)|}},\\
&&\nonumber \\
l_k(b)\hskip -0.6cm &&
\label{zone_k_eq_big_low}
=\frac {2(\mu-\frac \ga{\gs(b)})}{\phi'_k(b)+\sqrt{\phi'_k(b)^2+8\pi \ga (\mu \gs(b)-\ga)|\phi''_k(b)|}}.
\end{eqnarray*}
In addition, \cite{LCHJJ18} provides the well-separated conditions such that  \eqref{cond_no_overlapping} holds.

\bigskip

In the following we assume  $x(t)$ given by \eqref{AHM} satisfies  
\eqref{cond_basic0}-\eqref{condition1}, \eqref{cond_basic_2nd}, and \eqref{condition2} 
for some $\vep_1>0, \vep_3>0$.  In addition, we assume 
the adaptive CWLTs $\wt W_{x_k}(a,b)$ of its components  with a window function $g$ 
lie within non-overlapping scale-time zones $O_k$ in the sense that
\eqref{def_Ok2_ineq} holds and each $O_k$ is  given by \eqref{def_Ok2} with \eqref{cond_no_overlapping} satisfied for some $\gs(b)$. Let ${\cal E}_{\vep_1, \vep_3}$ denote the set of such multi-component signals. 

Denote 
 \begin{eqnarray}
\label{def_h0} 
&& h_0(b):=\min_{1\le k \le K} |G_k(0, \frac{\mu}{\phi'_k(b)}, b)|,   
\\
 \label{def_Err}
&& \tErr_\ell=\tErr_\ell(b):=M(b)\Pi_\ell(b)+\tau_0\sum_{k\ne \ell}A_k(b), 
 \end{eqnarray}
where $\Pi_\ell(b)$ is defined by \eqref{def_gLk_2nd}. 


\begin{lem}
Let $x(t)\in {\cal E}_{\vep_1, \vep_3}$ and $\wt W_x(a, b)$ be its adaptive CWLT with a window function $g$.  Then for any $(a, b)\in O_\ell$, 
\begin{equation}
\label{est_xk_CWT_2nd}
 \big|\wt W_x(a, b)-x_\ell(b) G_\ell\big(\gs(b)(\mu -a \phi'_\ell(b)), a, b\big)\big|\le \tErr_\ell(b). 
\end{equation}
 \end{lem}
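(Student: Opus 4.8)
The plan is to mirror exactly the proof of the analogous sinusoidal-model lemma (Lemma 3 in the excerpt), since the linear-chirp decomposition \eqref{CWT_approx} plays the same structural role here that \eqref{CWT_approx_1st} played there. Starting from \eqref{CWT_approx}, for a fixed $(a,b)\in O_\ell$ I would isolate the $k=\ell$ term and bound the remainder:
\[
\big|\wt W_x(a, b)-x_\ell(b) G_\ell\big(\gs(b)(\mu -a \phi'_\ell(b)), a, b\big)\big|
\le \big|\err_0\big|+\sum_{k\ne \ell} A_k(b)\,\big|G_k\big(\gs(b)(\mu -a \phi'_k(b)), a, b\big)\big|.
\]
The first term is handled by \eqref{rem0_est_all_2nd} (or \eqref{rem0_est_k_2nd}), giving $|\err_0|\le M(b)\Pi_\ell(b)$, using $a\in[a_1,a_2]$ together with the convention \eqref{def_a2_2nd} — or more sharply the bound $M(b)\Pi_k(b)$ valid on $O_k$; here I would just use $\Pi_\ell(b)$ since $(a,b)\in O_\ell$, matching the statement of \tErr_\ell.

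For the cross terms, the key point is that for $k\ne\ell$ and $(a,b)\in O_\ell$, the non-overlapping condition \eqref{cond_no_overlapping} forces $(a,b)\notin O_k$, and then \eqref{def_Ok2_ineq} yields $\big|G_k\big(\gs(b)(\mu -a \phi'_k(b)), a, b\big)\big|\le \tau_0$. Summing over $k\ne\ell$ gives $\tau_0\sum_{k\ne\ell}A_k(b)$. Combining the two estimates produces exactly
\[
\big|\wt W_x(a, b)-x_\ell(b) G_\ell\big(\gs(b)(\mu -a \phi'_\ell(b)), a, b\big)\big|
\le M(b)\Pi_\ell(b)+\tau_0\sum_{k\ne \ell}A_k(b)=\tErr_\ell(b),
\]
which is \eqref{est_xk_CWT_2nd}.

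I do not anticipate a serious obstacle: the argument is essentially bookkeeping once \eqref{CWT_approx}, Lemma \ref{lem:basic}, and the geometry of the zones $O_k$ are in place. The one point requiring a little care is the direction of the inequality $(a,b)\in O_\ell \Rightarrow (a,b)\notin O_k$ for $k\ne\ell$ — this uses precisely the disjointness \eqref{cond_no_overlapping}, and then the pointwise decay estimate \eqref{def_Ok2_ineq} off the zone $O_k$ does the rest. A secondary bookkeeping subtlety is simply choosing which $\Pi_k(b)$ to quote for the $\err_0$ bound; using $\Pi_\ell(b)$ on $O_\ell$ is both valid (by \eqref{rem0_est_k_2nd}) and consistent with the definition \eqref{def_Err} of $\tErr_\ell$, so that is the natural choice.
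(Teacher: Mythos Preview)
Your proposal is correct and follows essentially the same argument as the paper: start from the expansion \eqref{CWT_approx}, isolate the $\ell$th term, bound $|\err_0|$ by $M(b)\Pi_\ell(b)$ via \eqref{rem0_est_k_2nd}, and use the disjointness \eqref{cond_no_overlapping} together with \eqref{def_Ok2_ineq} to bound each cross term by $\tau_0 A_k(b)$. The only cosmetic difference is that the paper invokes \eqref{def_Ok2_ineq} directly without spelling out the intermediate step $(a,b)\in O_\ell\Rightarrow (a,b)\notin O_k$, which you make explicit.
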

 
 \begin{proof}
 For any $(a, b)\in O_\ell$, from \eqref{CWT_approx}, we have 
\begin{eqnarray*}
\nonumber && \big|\wt W_x(a, b)-x_\ell(b) G_\ell\big(\gs(b)(\mu -a \phi'_\ell(b)), a, b\big)|\\
&&=\Big|\sum_{k\ne \ell} x_k(b)  G_k\big(\gs(b)(\mu -a \phi'_\ell(b)), a, b\big) +\err_0\Big|\\
&&\le \big|\err_0\big|+\sum_{k\ne \ell} A_k(b)  \big| G_k\big(\gs(b)(\mu -a \phi'_\ell(b)), a, b\big)\big| \\
&&\le  M(b)\Pi_\ell(b)+\sum_{k\ne \ell} A_k(b)\tau_0 \; \quad \hbox{(by \eqref{rem0_est_k_2nd} and \eqref{def_Ok2_ineq})}\\
&&= \tErr_\ell(b), 
\end{eqnarray*}
as desired. 
\end{proof}

\bigskip

For a window function, define 
\begin{equation}
\label{def_PFT}
\wb g(\xi, \gl):={\cal F}\big(e^{-i\pi \gl t^2}g(t)\big)\big(\xi)=
\int_{\R} g(t) e^{-i2\pi \xi t-i\pi \gl t^2}dt.    
\end{equation}
In this section, we assume a window function satisfying the following assumption. 

{\bf Assumption 2.} \;  {\it $|\wb g(\xi, \gl)|$ can be written as  
\begin{equation*}
|\wb g(\xi, \gl)|=F(|\xi|, \gl), 
\end{equation*}
where for each $\gl\in \R$, $F(\xi, \gl)$ is a positive and {\rm (}strictly{\rm )} decreasing function in $\xi$ for $\xi\ge 0$. }

\bigskip

Observe that $G_k(\xi, a, b)$ defined by \eqref{def_Gk_general} is $\wb g(\xi, -\phi''_k(b) a^2 \gs^2(b))$. 
Thus for fixed $a$ and $b$, $|G_k(\xi, a, b)|$ is a decreasing function in $\xi$ for $\xi\ge 0$.   
Denote 
\begin{equation}
\label{def_Fk}
F_{a, k}(\xi):=|G_k(\xi, a, b)|, \; \xi \ge 0. 
\end{equation}
Here we drop $b$ in $F_{a, k}(\xi)$ for simplicity of presentation of the paper. We let $F^{-1}_{a, k}(\xi)$ denotes the inverse function of $F_{a, k}(\xi), \xi\ge 0$. 

For a fixed $b$ and a positive number  $\wt \ep_1$ (possibly depending on $b$), 
we let $\cG_b$ denote the set defined by \eqref{def_cGk}, and we define 
\begin{equation}
\label{def_Hk}
\cH_{b, k}=\cG_b\cap \{a: \; (a, b)\in O_ k\}. 
\end{equation}
Note that $\cH_{b, k}$ also depends on $\wt \ep_1$, and for simplicity of presentation, we drop 
$\wt \ep_1$ from it. 
When $\cH_{b, k}, 1\le k\le K$ are non-empty and non-overlapping, let 
 $\wc a_\ell$ be defined by \eqref{def_max_eta_2nd}. 
Observe that $|G_\ell(0,  a, b)|=|\wb g(0, -\phi''_k(b) a^2 \gs^2(b))|=F(0, -\phi''_k(b) a^2 \gs^2(b))$ is always positive by Assumption 2. Thus  $\wc a_\ell$ in \eqref{def_max_eta_2nd} is well defined. 

Next theorem provides our analysis results on adaptive CWLT-based IF estimation and component recovery derived from linear chirp local approximation. 
\begin{theo}
\label{theo:main_2nd} Let $x(t)\in {\cE}_{\vep_1, \vep_3}$ for some $\vep_1, \vep_3>0$ and $g$ be a window function satisfying Assumption 2.  
Suppose 
 $\vep_1, \vep_3, \tau_0$ are small enough such that 
\begin{eqnarray*}
&&2M(b)\big(\tau_0+\Pi_1(b)\big)\le h_0(b)\nu(b), \\
&&\Big(\frac 1{\big|G_\ell\big(0,\wc a_{\ell}, b\big)\big|}+\frac 1{|G_\ell(0,  \frac\mu{\phi_\ell\rq{}(b)}, b)|}\Big)\frac {\tErr_\ell(b)}{A_\ell(b)}<1. 
\end{eqnarray*} 
Let $\wt \ep_1$ be a function of $b$ satisfying  
\begin{equation}
\label{cond_ep_2nd}
M(b)\big(\tau_0+\Pi_1(b)\big)\le 
\wt \ep_1 \le h_0(b) \nu(b)-M(b)\big(\tau_0+\Pi_1(b)\big).
\end{equation}
Then the following statements hold. 
\begin{enumerate}
\item[{\rm (a)}] Let $\cG_b$ and $\cH_{b, k}$ be the sets defined by \eqref{def_cGk}and \eqref{def_Hk} respectively for some $\wt \ep_1$ satisfying \eqref{cond_ep_2nd}.   Then $\cG_b$ can be expressed as a disjoint union of exactly $K$ non-empty sets $\cH_{b, k}, 1\le k\le K$. 

\item[{\rm (b)}] Let $\wc a_\ell$ be defined by \eqref{def_max_eta_2nd}.  Then for $\ell=1, \cdots, K$, 
\begin{equation}\label{phi_est_2nd}
|\mu -\wc a_{\ell}(b)\phi_{\ell}^{\gp}(b)|\le \Bd_{1, \ell}:=\frac{1}{\gs(b)} F_{\wc a_{\ell}, \ell}^{-1}\Big(\big|G_\ell\big(0,\wc a_{\ell}, b\big)\big|-\frac {\tErr_\ell(b)}{A_\ell(b)}-\frac{\big|G_\ell\big(0,\wc a_{\ell}, b\big)\big|}{|G_\ell(0,  \frac\mu{\phi_\ell\rq{}(b)}, b)|}
\frac {\tErr_\ell(b)}{A_\ell(b)}
\Big), 
\end{equation}
where $\tErr_\ell$ is defined by \eqref{def_Err}, $F_{\wc a_{\ell}, \ell}^{-1}(\xi)$ is the inverse function 
$F_{\wc a_{\ell}, \ell}(\xi)=|G_\ell(\xi, \wc a_{\ell}, b)|$. 
\item[{\rm (c)}]  For $\ell=1, 2, \cdots, K$,
\begin{equation}
\label{comp_xk_est_2nd}
\big|\wt W_{x}(\wc a_\ell, b)-G_\ell(0, \wc a_\ell, b) x_\ell(b)\big|
\le \Bd_{2, \ell},  
\end{equation}
where 
 \begin{equation}
\label{def_B2_ell}
 \Bd_{2, \ell}:=\tErr_\ell(b)+2\pi I_1 A_\ell(b)  F_{\wc a_{\ell}, \ell}^{-1}
 \Big(\big|G_\ell\big(0,\wc a_{\ell}, b\big)\big|-\frac {\tErr_\ell(b)}{A_\ell(b)}-\frac{\big|G_\ell\big(0,\wc a_{\ell}, b\big)\big|}{|G_\ell(0,  \frac\mu{\phi_\ell\rq{}(b)}, b)|}
\frac {\tErr_\ell(b)}{A_\ell(b)}
\Big). 
 \end{equation}
\item[{\rm (d)}] For $\ell=1, 2, \cdots, K$,
\begin{equation}
\label{abs_IA_est_2nd}
\big| |\wt W_{x}(\wc a_\ell, b)|-|G_\ell(0, \wc a_\ell, b)| A_\ell(b)\big|
\le  \tErr_\ell(b) \max \Big\{1,\frac{\big|G_\ell\big(0,\wc a_{\ell}, b\big)\big|}{|G_\ell(0,  \frac\mu{\phi_\ell\rq{}(b)}, b)|}\Big\}. 
\end{equation} 
\end{enumerate}
\end{theo}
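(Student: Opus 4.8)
The plan is to prove parts (a)--(d) in sequence, exactly paralleling the structure of the proof of Theorem~\ref{theo:main_1st}, but now carrying the extra factor $G_\ell(0,\wc a_\ell,b)$ through all estimates. The single workhorse is the preceding lemma, inequality~\eqref{est_xk_CWT_2nd}, which says that on $O_\ell$ the transform $\wt W_x(a,b)$ agrees with $x_\ell(b)G_\ell(\gs(b)(\mu-a\phi_\ell'(b)),a,b)$ up to $\tErr_\ell(b)$. Throughout I will use that $F_{a,\ell}(\xi)=|G_\ell(\xi,a,b)|$ is positive and strictly decreasing in $\xi\ge 0$ by Assumption~2, so that $F^{-1}_{a,\ell}$ is well defined wherever its argument lies in the range of $F_{a,\ell}$; the two smallness hypotheses on $\vep_1,\vep_3,\tau_0$ are there precisely to keep those arguments in range.

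For part (a), I would argue as in Theorem~\ref{theo:main_1st}(a). The inclusion $\cup_k\cH_{b,k}\subseteq\cG_b$ is immediate from the definition~\eqref{def_Hk}. For the reverse inclusion, take $a\in\cG_b$ and suppose $(a,b)\notin\cup_k O_k$; then \eqref{def_Ok2_ineq} gives $|G_k(\gs(b)(\mu-a\phi_k'(b)),a,b)|\le\tau_0$ for every $k$, so by \eqref{CWT_approx} and \eqref{rem0_est_all_2nd},
\[
|\wt W_x(a,b)|\le\tau_0\sum_{k=1}^K A_k(b)+M(b)\Pi_1(b)\le M(b)\big(\tau_0+\Pi_1(b)\big)\le\wt\ep_1,
\]
contradicting $a\in\cG_b$; hence $a\in\cH_{b,\ell}$ for some $\ell$, and disjointness follows from \eqref{cond_no_overlapping}. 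Non-emptiness: I claim $a=\mu/\phi_\ell'(b)\in\cG_b$. By \eqref{est_xk_CWT_2nd} at that point, $|\wt W_x(\mu/\phi_\ell'(b),b)|\ge A_\ell(b)|G_\ell(0,\mu/\phi_\ell'(b),b)|-\tErr_\ell(b)\ge h_0(b)\nu(b)-M(b)(\tau_0+\Pi_1(b))\ge\wt\ep_1$, using \eqref{def_h0}, the bound $\tErr_\ell(b)\le M(b)(\tau_0+\Pi_1(b))$ (which holds since $\sum_{k\ne\ell}A_k(b)\le M(b)$ and $\Pi_\ell\le\Pi_1$), and \eqref{cond_ep_2nd}; and $(\mu/\phi_\ell'(b),b)\in O_\ell$, so $\mu/\phi_\ell'(b)\in\cH_{b,\ell}$.

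For part (b), I set $a=\wc a_\ell$. The optimality in \eqref{def_max_eta_2nd} gives $|\wt W_x(\wc a_\ell,b)/G_\ell(0,\wc a_\ell,b)|\ge|\wt W_x(\mu/\phi_\ell'(b),b)/G_\ell(0,\mu/\phi_\ell'(b),b)|$. Bounding the right side below by $A_\ell(b)-\tErr_\ell(b)/|G_\ell(0,\mu/\phi_\ell'(b),b)|$ via \eqref{est_xk_CWT_2nd}, and the left side above by $A_\ell(b)F_{\wc a_\ell,\ell}(|\gs(b)(\mu-\wc a_\ell\phi_\ell'(b))|)+\tErr_\ell(b)/|G_\ell(0,\wc a_\ell,b)|$, then dividing, rearranging, and multiplying through by $|G_\ell(0,\wc a_\ell,b)|$, I obtain
\[
|G_\ell(0,\wc a_\ell,b)|-\frac{\tErr_\ell(b)}{A_\ell(b)}-\frac{|G_\ell(0,\wc a_\ell,b)|}{|G_\ell(0,\mu/\phi_\ell'(b),b)|}\frac{\tErr_\ell(b)}{A_\ell(b)}\le F_{\wc a_\ell,\ell}\big(|\gs(b)(\mu-\wc a_\ell\phi_\ell'(b))|\big);
\]
the second smallness hypothesis ensures the left side is positive, and applying the decreasing $F^{-1}_{\wc a_\ell,\ell}$ yields \eqref{phi_est_2nd}. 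Part (c) then runs exactly like Theorem~\ref{theo:main_1st}(c): write $\wt W_x(\wc a_\ell,b)-G_\ell(0,\wc a_\ell,b)x_\ell(b)$ as the sum of the $\tErr_\ell(b)$-term from \eqref{est_xk_CWT_2nd} and $x_\ell(b)[G_\ell(\gs(b)(\mu-\wc a_\ell\phi_\ell'(b)),\wc a_\ell,b)-G_\ell(0,\wc a_\ell,b)]$; the latter is $A_\ell(b)|\int_\R e^{i\pi\phi_\ell''(b)\wc a_\ell^2\gs^2(b)t^2}g(t)(e^{-i2\pi\gs(b)(\mu-\wc a_\ell\phi_\ell'(b))t}-1)\,dt|$, bounded by $2\pi I_1 A_\ell(b)|\gs(b)(\mu-\wc a_\ell\phi_\ell'(b))|$, and the part~(b) bound finishes it. For part (d), \eqref{est_xk_CWT_2nd} gives $|\wt W_x(\wc a_\ell,b)|\le A_\ell(b)|G_\ell(\cdots)|+\tErr_\ell\le A_\ell(b)|G_\ell(0,\wc a_\ell,b)|+\tErr_\ell$ (using the monotonicity, so $|G_\ell(\xi,\cdot)|\le|G_\ell(0,\cdot)|$), while from the optimality inequality in (b), $|\wt W_x(\wc a_\ell,b)|\ge|G_\ell(0,\wc a_\ell,b)|(A_\ell(b)-\tErr_\ell/|G_\ell(0,\mu/\phi_\ell'(b),b)|)$; combining the two one-sided estimates gives \eqref{abs_IA_est_2nd} with the stated max.

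The main obstacle, and the one genuinely new feature compared with Section~2, is the asymmetric normalization by $G_\ell(0,\cdot)$: the lower bound in (b) and (d) is most naturally expressed at the anchor point $\mu/\phi_\ell'(b)$ where the normalizing factor is $G_\ell(0,\mu/\phi_\ell'(b),b)$, but the upper bound involves the factor $G_\ell(0,\wc a_\ell,b)$ at the (a priori unknown) maximizer. Because these two factors need not be equal, the clean cancellation that produced $1-2\tErr_\ell/A_\ell$ in Theorem~\ref{theo:main_1st}(b) is replaced by the more cumbersome expression in \eqref{phi_est_2nd}, and one must be careful that the argument of $F^{-1}_{\wc a_\ell,\ell}$ stays strictly positive---this is exactly what the second displayed smallness hypothesis guarantees---and that it does not exceed $F_{\wc a_\ell,\ell}(0)=|G_\ell(0,\wc a_\ell,b)|$, which holds automatically since $\tErr_\ell/A_\ell\ge 0$. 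Keeping track of which $a$ the factor $G_\ell(0,\cdot)$ is evaluated at, in every inequality, is the delicate bookkeeping; everything else is a routine transcription of Section~2.
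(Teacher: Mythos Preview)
Your proposal is correct and follows essentially the same route as the paper's proof: both use the lemma \eqref{est_xk_CWT_2nd} as the sole workhorse, establish (a) by the same contradiction argument and the same non-emptiness witness $\mu/\phi_\ell'(b)$, derive (b) from the optimality of $\wc a_\ell$ combined with upper and lower bounds from \eqref{est_xk_CWT_2nd} (the paper writes the key chain as \eqref{est_xk_CWT2_both}), and handle (c) and (d) by the identical triangle-inequality and monotonicity arguments. There is a small slip in your intermediate upper bound in (b)---the term $A_\ell(b)F_{\wc a_\ell,\ell}(\cdot)$ should also carry the divisor $|G_\ell(0,\wc a_\ell,b)|$ before you ``multiply through''---but your displayed final inequality is correct and matches the paper exactly.
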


Note that to obtain $\wc a_k$ in \eqref{def_max_eta_2nd}, we need $\phi_k^{\gp\gp}(t)$ to calculate $|G_k(0,  a, b)|$. 
However in practice we do not know in general  $\phi_k^{\gp\gp}(t)$. Thus we need to estimate $\phi_k^{\gp\gp}(t)$. The reader refers to \cite{LCJ20,CJLL20_adpSTFT} for methods to find an approximation of $\phi_k^{\gp\gp}(t)$.  

\begin{mrem}
From Theorem \ref{theo:main_2nd}{\rm (c)}, we have the recovery formula $\eqref{comp_xk_est_2nd1}$. 
For a real-valued $x(t)$, the  recovery formula for component $x_\ell(t)$ is   
\begin{equation}
\label{comp_xk_est_2nd1_real}
 x_\ell(t) \approx 2{\rm Re}\Big(\frac 1{G_\ell(0, \wc a_\ell, b)}\wt W_{x}(\wc a_\ell, b)\Big). 
\end{equation}

In some cases $\wc a_\ell$ defined by \eqref{def_max_eta_2nd} is $\wh a_\ell$ defined by \eqref{def_max_eta}. 
Compared with sinusoidal signal-based recovery formulas \eqref{comp_xk_est0} and \eqref{comp_xk_est_real}, the component recovery formulas derived from linear chirp local approximation have a factor 
 $\frac 1{G_\ell(0, \wc a_\ell, b)}$ 
 as shown in \eqref{comp_xk_est_2nd1} and \eqref{comp_xk_est_2nd1_real}. 
 However, this simple factor results in a more accurate component recovery.  
 \hfill $\blacksquare$
\end{mrem}

\bigskip 
\begin{example}\label{example2}
Let $g(t)$ be the Gaussian window function given by \eqref{def_g}. Then the corresponding $G_k(\xi, a, b)$ is given by \eqref{def_Gk}.  $|G_k(\xi, a, b)|$ can be written as $F_{a, k}(|\xi|)$ with $F_{a, k}$ given by (see \eqref{abs_Gk}) 
$$
F_{a, k}(\xi)=\frac 1{\big(1+(2\pi \phi''_k(b)a^2\gs^2(b))^2\big)^{\frac 14}}\;
e^{-\frac{2\pi^2}{1+(2\pi \phi''_k(b)a^2 \gs^2(b))^2}\xi^2}.
$$
 Then we have 
$$
F^{-1}_{a, \ell}(\xi)= \frac1{\pi \sqrt 2 F_{a, \ell}(0)^2} \Big(-\ln \frac {\xi}{F_{a, \ell}(0)}\Big)^{1/2}, \;  0<\xi <F_{a, \ell}(0),  
$$
where 
$$
F_{a, \ell}(0)=\big(1+(2\pi \phi''_\ell(b)\gs^2(b) a^2)^2\big)^{-\frac 14}.
$$ 
Note that $\big|G_\ell\big(0,\wc a_{\ell}, b\big)\big|=F_{\wc a, \ell}(0)$.  
Hence the error bound $\Bd_{1, \ell}$ in \eqref{phi_est_2nd} is 
\begin{eqnarray*}
\Bd_{1, \ell}\hskip -0.6cm && =\frac{1}{\gs(b)\pi \sqrt 2 F_{\wc a, \ell}(0)^2} \Big\{-\ln \Big(1-\big(\frac 1{\big|G_\ell\big(0,\wc a_{\ell}, b\big)\big|}+\frac 1{|G_\ell(0,  \frac\mu{\phi_\ell\rq{}(b)}, b)|}\big)\frac {\tErr_\ell(b)}{A_\ell(b)}\Big)\Big\}^{1/2}\\
&&\\
&&= \frac{\big(1+(2\pi \phi''_\ell(b)\gs^2(b) \wc a^2)^2\big)^{1/2}}{\gs(b)\pi \sqrt 2} \times \\
&& \quad \Big\{-\ln \Big(1-\Big\{
\Big(1+\big(2\pi \phi''_\ell(b)\gs^2(b) \wc a^2\big)^2\Big)^{1/4}+
\Big(1+\big(2\pi \phi''_\ell(b)\gs^2(b) \big(\frac\mu{\phi_\ell\rq{}(b)}\big)^2 \big)^2\Big)^{1/4}\Big\}\frac {\tErr_\ell(b)}{A_\ell(b)}\Big)\Big\}^{1/2}. 
\end{eqnarray*}
Suppose $\vep_1, \vep_3, \tau_0$ are small enough such that
$$
\Big\{
\Big(1+\big(2\pi \phi''_\ell(b)\gs^2(b) \wc a^2\big)^2\Big)^{1/4}+
\Big(1+\big(2\pi \phi''_\ell(b)\gs^2(b) \big(\frac\mu{\phi_\ell\rq{}(b)}\big)^2 \big)^2\Big)^{1/4}\Big\}\frac {\tErr_\ell(b)}{A_\ell(b)}\le c_0 
$$
for some $0< c_0<1$. Then applying the fact $-\ln(1-t)< \frac 1{1-c_0}t$ for $0<t\le c_0$ again,  we have   
\begin{eqnarray*}
\Bd_{1, \ell}\hskip -0.6cm &&< \frac{\big(1+(2\pi \phi''_\ell(b)\gs^2(b) \wc a^2)^2\big)^{1/2}}{\gs(b)\pi \sqrt 2} \times \\
&& \quad \frac 1{\sqrt{1-c_0}} \Big\{
\Big(1+\big(2\pi \phi''_\ell(b)\gs^2(b) \wc a^2\big)^2\Big)^{1/4}+
\Big(1+\big(2\pi \phi''_\ell(b)\gs^2(b) \big(\frac\mu{\phi_\ell\rq{}(b)}\big)^2 \big)^2\Big)^{1/4}\Big\}^{1/2}\sqrt{\frac {\tErr_\ell(b)}{A_\ell(b)}}. 
\end{eqnarray*}
Also, in this case, we have 
\begin{eqnarray*}
&&\Bd_{2, \ell}<  
\tErr_\ell(b)+\frac{\sqrt 2 I_1}{\sqrt{1-c_0}}\big(1+(2\pi \phi''_\ell(b)\gs^2(b) \wc a^2)^2\big)^{1/2} \times \\
&& \quad\Big\{
\Big(1+\big(2\pi \phi''_\ell(b)\gs^2(b) \wc a^2\big)^2\Big)^{1/4}+
\Big(1+\big(2\pi \phi''_\ell(b)\gs^2(b) \big(\frac\mu{\phi_\ell\rq{}(b)}\big)^2 \big)^2\Big)^{1/4}\Big\}^{1/2}\sqrt{\tErr_\ell(b)A_\ell(b)}. 
\end{eqnarray*}
From the above estimate for $\Bd_{2, \ell}$, we can see that a smaller $\gs(b)$ overall results in a smaller  
error bound for 
$\Bd_{2, \ell}$.   

Note that 
$$
G_\ell(0, a, b)=\frac {1}{\sqrt{1-i2\pi \phi''_\ell(b)a^2\gs^2(b)}}. 
$$
Thus component recovery formulas in \eqref{comp_xk_est_2nd1} and \eqref{comp_xk_est_2nd1_real} are respectively 
\begin{equation}
\label{comp_xk_est_2nd1_gaussian}
 x_\ell(t) \approx \sqrt{1-i2\pi \phi''_\ell(b)\wc a_\ell^2\gs^2(b)}\; \wt W_{x}(\wc a_\ell, b)
\end{equation}
and  
\begin{equation}
\label{comp_xk_est_2nd1_real_gaussian}
 x_\ell(t) \approx 2{\rm Re}\Big(\sqrt{1-i2\pi \phi''_\ell(b)\wc a_\ell^2\gs^2(b)}\; \wt W_{x}(\wc a_\ell, b)\Big), 
\end{equation}
for a real-valued $x(t)$.  
\hfill $\blacksquare$
\end{example}

Next we provide the proof of Theorem \ref{theo:main_2nd}, which is similar to that of Theorem \ref{theo:main_1st}.

\bigskip

{\bf Proof  of  Theorem \ref{theo:main_2nd}(a)}.  Clearly $\cup _{k=1}^K \cH_{b, k}\subseteq \cG_b$. 
Next we show  $\cG_b\subseteq \cup _{k=1}^K \cH_{b, k}$.  Let $a \in \cG_b$. 
Assume $a\not \in \cup _{k=1}^K \cH_{b, k}$. That is $(a, b)\not \in \cup _{k=1}^K O_k$.  
Then by \eqref{def_Ok2_ineq}, 
we have  
$$
|G_k\big(\gs(b)(\mu-a\phi'_k(b)), a, b\big)|\le \tau_0.
$$ 
Hence, by \eqref{CWT_approx} and \eqref{rem0_est_all_2nd}, we have 
\begin{eqnarray*}
|\wt W_x(a, b)|\hskip -0.6cm &&\le \sum_{k=1}^K \big|x_k(b) G_k\big(\gs(b)(\mu-a\phi'_k(b)), a, b\big)\big| + |\err_0|
\\
&&\le \sum_{k=1}^K A_k(b)\tau_0  +M(b)\Pi_1(b)
=M(b)(\tau_0  +\Pi_1(b))\le \wt \ep_1, 
\end{eqnarray*}
a contradiction to the assumption $|\wt W_x(a, b)|>\wt \ep_1$. Thus $(a, b)\in O_\ell$ for some $\ell$. 
This shows that $a \in \cH_{b, \ell}$. Hence $\cG_b=\cup _{k=1}^K \cH_{b, k}$. 
Since $O_k, 1\le k\le K$ are not overlapping, we know  $\cH_{b, k}, 1\le k\le K$ are disjoint.  

To show that $\cH_{b, \ell}$ is non-empty, 
we need only to show $\frac\mu{\phi_\ell\rq{}(b)}\in \cG_b$.
From \eqref{est_xk_CWT_2nd} with $a=\frac \mu{\phi'_\ell(b)}$, we have 
\begin{eqnarray}
\label{est_xk_CWT2}
\big|\wt W_x(\frac \mu{\phi'_\ell(b)}, b)\big|\ge \big|x_\ell(b) G_\ell\big(0, \frac\mu{\phi_\ell\rq{}(b)}, b\big)\big|-\tErr_\ell(b)
 =A_\ell(b)\big|G_\ell\big(0, \frac\mu{\phi_\ell\rq{}(b)}, b\big)\big| - \tErr_\ell(b).  
\end{eqnarray}
Then by the definition of $\tErr_\ell(b)$, we have 
\begin{eqnarray*}
\big|\wt W_x(\frac\mu{\phi_\ell\rq{}(b)}, b)\big|\hskip -0.6cm &&>  A_\ell(b) \big|G_\ell\big(0, \frac\mu{\phi_\ell\rq{}(b)}, b\big)\big|- M(b)\tau_0  -M(b)\Pi_\ell(b)\\
&& \ge  \nu(b) h_0(b)- M(b)\big(\tau_0 +\Pi_1(b)\big) \ge \wt \ep_1.  
\end{eqnarray*}
Hence we conclude that $\frac\mu{\phi_\ell\rq{}(b)}\in \cG_b$. Thus all statements in  Theorem \ref{theo:main_2nd}(a) hold. 
\hfill $\square$


{\bf Proof  of  Theorem \ref{theo:main_2nd}(b)}. 
By the definition of $\wc a_\ell$ and \eqref{est_xk_CWT_2nd}, we have 
\begin{equation}
\label{est_xk_CWT2_both}
\big|\wt W_x(\frac \mu{\phi'_\ell(b)}, b)\big| \frac{\big|G_\ell\big(0,\wc a_{\ell}, b\big)\big|}{|G_\ell(0,  \frac\mu{\phi_\ell\rq{}(b)}, b)|}\le \big|\wt W_x(\wc a_\ell, b)\big| \le 
\big|x_\ell(b) 
G_\ell\big(\gs(b)(\mu -\wc a_\ell \; \phi'_\ell(b)),  \wc a_\ell, b\big)\big|+ \tErr_\ell(b)
\end{equation}
This and  \eqref{est_xk_CWT2} lead to  
\begin{equation*}
A_\ell(b)\big|G_\ell\big(0, \wc a_\ell, b\big)\big|  -\frac{\big|G_\ell\big(0,\wc a_{\ell}, b\big)\big|}{|G_\ell(0,  \frac\mu{\phi_\ell\rq{}(b)}, b)|} \tErr_\ell(b)\le A_\ell(b) \big|G_\ell \big(\gs(b)(\mu -\wc a_\ell \; \phi'_\ell(b)),  \wc a_\ell, b \big)\big|+ \tErr_\ell(b), 
\end{equation*}
that is 
$$
0<\big|G_\ell\big(0, \wc a_\ell, b\big)\big|- \frac{\tErr_\ell(b)}{A_\ell(b)}-\frac{\big|G_\ell\big(0,\wc a_{\ell}, b\big)\big|}{|G_\ell(0,  \frac\mu{\phi_\ell\rq{}(b)}, b)|}\frac{\tErr_\ell(b)}{A_\ell(b)} \le  F_ {\wc a_\ell, \ell}\big(|\gs(b)(\mu -\wc a_\ell \; \phi'_\ell(b)) |\big).  
$$
Therefore we have 
$$
\gs(b)|\mu -\wc a_\ell \; \phi'_\ell(b))| \le F^{-1}_ {\wc a_\ell, \ell}\Big(\big|G_\ell\big(0, \wc a_\ell, b\big)\big|-\frac{\tErr_\ell(b)}{A_\ell(b)} - \frac{\big|G_\ell\big(0,\wc a_{\ell}, b\big)\big|}{|G_\ell(0,  \frac\mu{\phi_\ell\rq{}(b)}, b)|}\frac{\tErr_\ell(b)}{A_\ell(b)}\Big). 
$$
This proves \eqref{phi_est_2nd}. \hfill $\square$


{\bf Proof  of  Theorem \ref{theo:main_2nd}(c)}.
From \eqref{est_xk_CWT_2nd}, we have 
\begin{eqnarray*}
&&\big|\wt W_x(\wc a_\ell, b)- G_\ell\big(0, \wc a_\ell, b\big)x_\ell(b)\big|\\
&&\le \big|\wt W_x(\wc a_\ell, b)-x_\ell(b) G_\ell\big(\gs(b)(\mu -\wc a_\ell \; \phi'_\ell(b)), \wc a_\ell, b\big)\big|\\
&& \qquad +\big |x_\ell(b) G_\ell\big(\gs(b)(\mu -\wc a_\ell \; \phi'_\ell(b)), \wc a_\ell, b\big)-G_\ell\big(0, \wc a_\ell, b\big)x_\ell(b)\big|
\\
&&\le \tErr_\ell(b)+A_\ell(b) \Big| \int_\R g(t) e^{i\pi \phi_\ell''(b) (\wc a_\ell \; \gs(b)t)^2}\Big(e^{-i2\pi \gs(b)(\mu -\wc a_\ell \; \phi'_\ell(b) )t }-1\Big) dt \Big|\\
&&\le \tErr_\ell(b)+A_\ell(b) \int_\R |g(t)| \; 2\pi \gs(b)\big|\mu -\wc a_\ell \; \phi'_\ell(b)\big | |t| dt\\
&&\le \tErr_\ell(b)+A_\ell(b) 2\pi \gs(b)\big|\mu -\wc a_\ell \; \phi'_\ell(b)\big |\int_\R |g(t)|  |t| dt\\
&&\le   \tErr_\ell(b)+2\pi I_1 A_\ell(b) 
 F^{-1}_ {\wc a_\ell, \ell}\Big(\big|G_\ell\big(0, \wc a_\ell, b\big)\big|-\frac{\tErr_\ell(b)}{A_\ell(b)} - \frac{\big|G_\ell\big(0,\wc a_{\ell}, b\big)\big|}{|G_\ell(0,  \frac\mu{\phi_\ell\rq{}(b)}, b)|}\frac{\tErr_\ell(b)}{A_\ell(b)}\Big).
\end{eqnarray*}
This proves \eqref{comp_xk_est_2nd}. 
\hfill $\square$

{\bf Proof  of  Theorem \ref{theo:main_2nd}(d)}. Observe that, by Assumption 2, 
$$
|G_\ell(\xi, a, b)|=|\wb g(\xi, -\phi''_\ell(b) a^2 \gs^2(b))|\le |\wb g(0, -\phi''_\ell(b) a^2 \gs^2(b))|=|G_\ell(0, a, b)|. 
$$
Thus from \eqref{est_xk_CWT2} and \eqref{est_xk_CWT2_both}, we have 
\begin{equation}
\label{est_xk_CWT2_both2}
A_\ell(b)\big|G_\ell\big(0, \wc a_\ell, b\big)\big|  -\frac{\big|G_\ell\big(0,\wc a_{\ell}, b\big)\big|}{|G_\ell(0,  \frac\mu{\phi_\ell\rq{}(b)}, b)|} \tErr_\ell(b)\le \big|\wt W_x(\wc a_\ell, b)\big| 
\le A_\ell(b) \big|G_\ell \big(0,  \wc a_\ell, b \big)\big|+ \tErr_\ell(b).  
\end{equation}
\eqref{abs_IA_est_2nd} follows immediately from \eqref{est_xk_CWT2_both2}. This completes the proof of  Theorem \ref{theo:main_2nd}(d). 
\hfill $\square$


\section{Experimental results} 

In this section we provide some experimental results to illustrate our general theory. In our experiments, we use the Gaussian function $g(t)$ defined by \eqref{def_g} as the window function and we set $\tau_0=\frac 18$ for the {\it essential support} of $g(t)$. In addition, we let $\mu=1$. As we observe in Examples 1 and 2 that for either the sinusoidal signal-based model or the linear chirp-based model, a  smaller  $\gs(b)$ results in smaller error bounds  $\bd_{2, \ell}$ and $\Bd_{2, \ell}$ for component recovery. Thus we should choose a small $\gs(b)$. However if $\gs(b)$ is too small, components of $x(t)$ will not be separated in the time-scale plane, namely, the time-scale zones of $\wt W_{x_k}(a, b)$  and $\wt W_{x_{k-1}}(a, b)$ overlap. \cite{LCJ20} derives the optimal time-varying parameter $\gs(b)$:  for the  sinusoidal signal-based model,  $\gs(b)$ is 
\begin{equation}
\label{def_gs1}
  \gs_1(b):=\max_{2\le k\le K}\Big\{\frac \ga\mu\; \frac{\phi'_k(b)+\phi'_{k-1}(b)}{\phi'_k(b)-\phi'_{k-1}(b)}\Big\};
\end{equation}
while for the linear chirp-based model,  $\gs(b)$ is 
\begin{equation}
\label{def_gs2}
\gs_2(b):=\left\{
\begin{array}{ll}
\max\Big\{\frac\ga\mu, \frac{\gb_k(b)-\sqrt{\Upsilon_k(b)}}{2\ga_k(b)}: \; 2\le k\le K\Big\}, &\hbox{if $|\phi''_k(b)|+|\phi''_{k-1}(b)|\not=0$}, \\
& \\
 \max\Big\{\frac \ga\mu \frac{\phi'_k(b)+\phi'_{k-1}(b)}{\phi'_k(b)-\phi'_{k-1}(b)}: \; 2\le k\le K\Big\}, &\hbox{if $\phi''_k(b)=\phi''_{k-1}(b)=0$,}
\end{array}
\right.
\end{equation}
where 
\begin{eqnarray*}
\label{gak}
&&\ga_k(b):=2\pi \ga \mu (|\phi''_k(b)|+|\phi''_{k-1}(b)|)^2,\\
\label{gbk}
&&\gb_k(b):=\big(\phi'_k(b)|\phi''_{k-1}(b)|+\phi'_{k-1}(b)|\phi''_{k}(b)|\big) \big(\phi'_k(b)-\phi'_{k-1}(b)\big)+4\pi \ga^2  \big(\phi''_k(b)^2-\phi''_{k-1}(b)^2\big),\\
&&\Upsilon_k(b):=
\big(\phi'_k(b)|\phi''_{k-1}(b)|+\phi'_{k-1}(b)|\phi''_{k}(b)|\big)^2\Big\{\big(\phi'_k(b)-\phi'_{k-1}(b)\big)^2-16\pi \ga^2 \big(|\phi''_k(b)|+|\phi''_{k-1}(b)|\big)
\Big\}, 
\end{eqnarray*}
with $\ga$ defined by \eqref{def_ga}. In practice $\phi'_k(t), \phi''_k(b), 1\le k\le K$ are unknown and hence, one needs to develop some algorithms to  approximate  $\gs_1(b)$ and $\gs_2(b)$. Readers refer to \cite{LCJ20} for an algorithm to approximate $\gs_2(b)$. In this paper, to discover the performance of our proposed approach for signal separation, we will use  both $\gs_1(b)$ and $\gs_2(b)$ as $\gs(b)$. Next we consider two signals.

\begin{figure}[H]
	\centering
	\begin{tabular}{ccc}
		\resizebox{2.1in}{1.4in}{\includegraphics{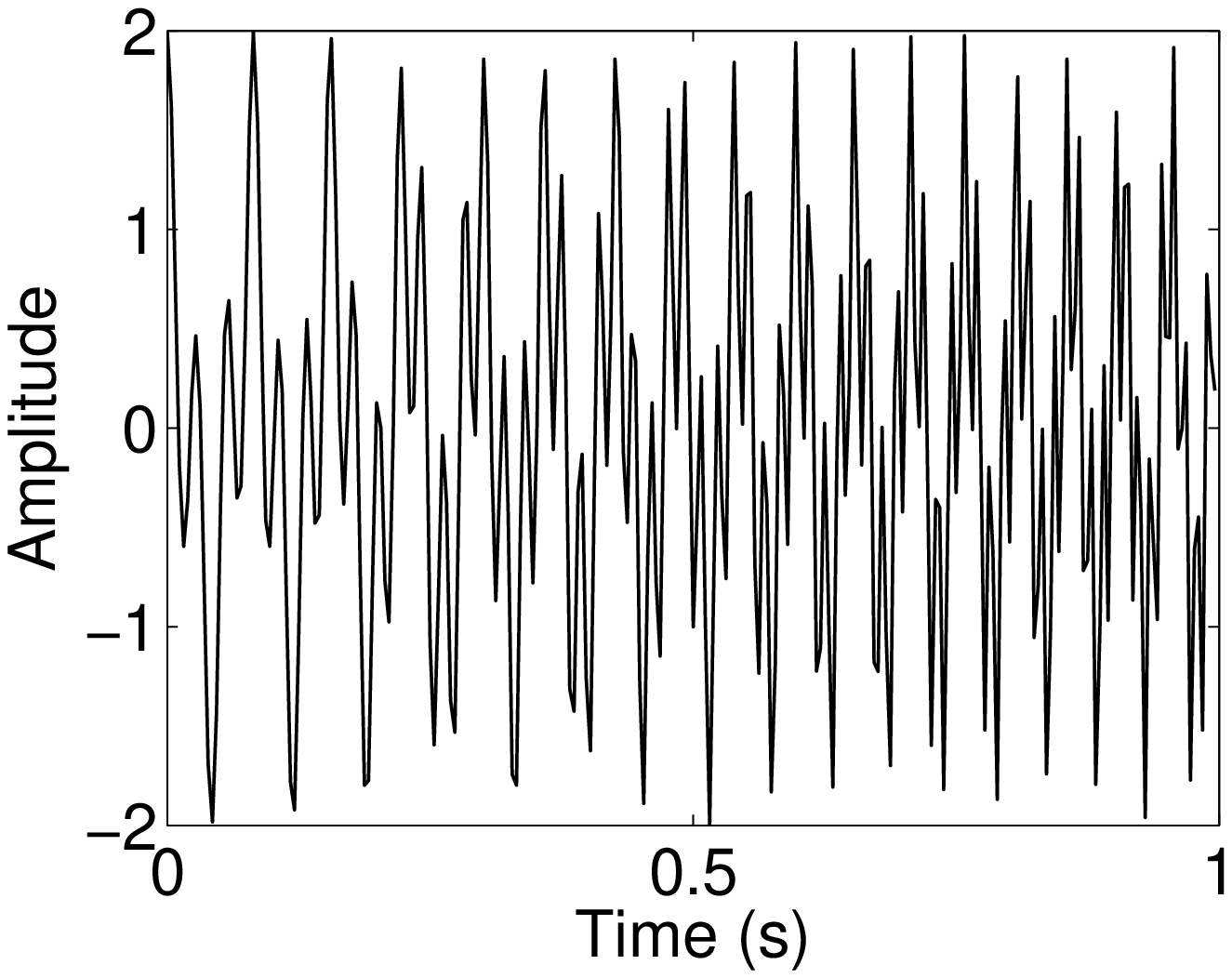}} \quad &
		\resizebox{2.1in}{1.4in}{\includegraphics{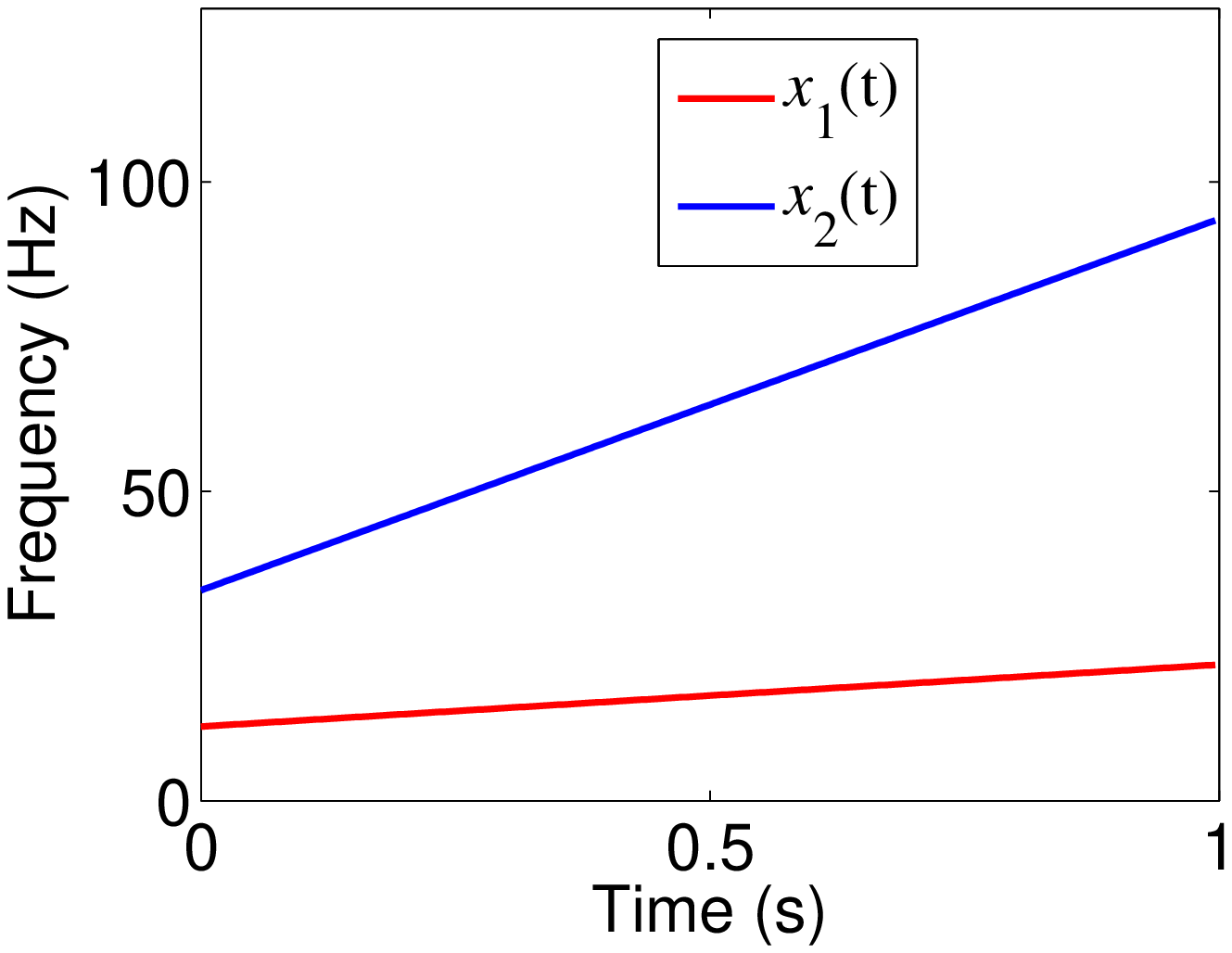}}\quad &
		\resizebox{2.1in}{1.4in}{\includegraphics{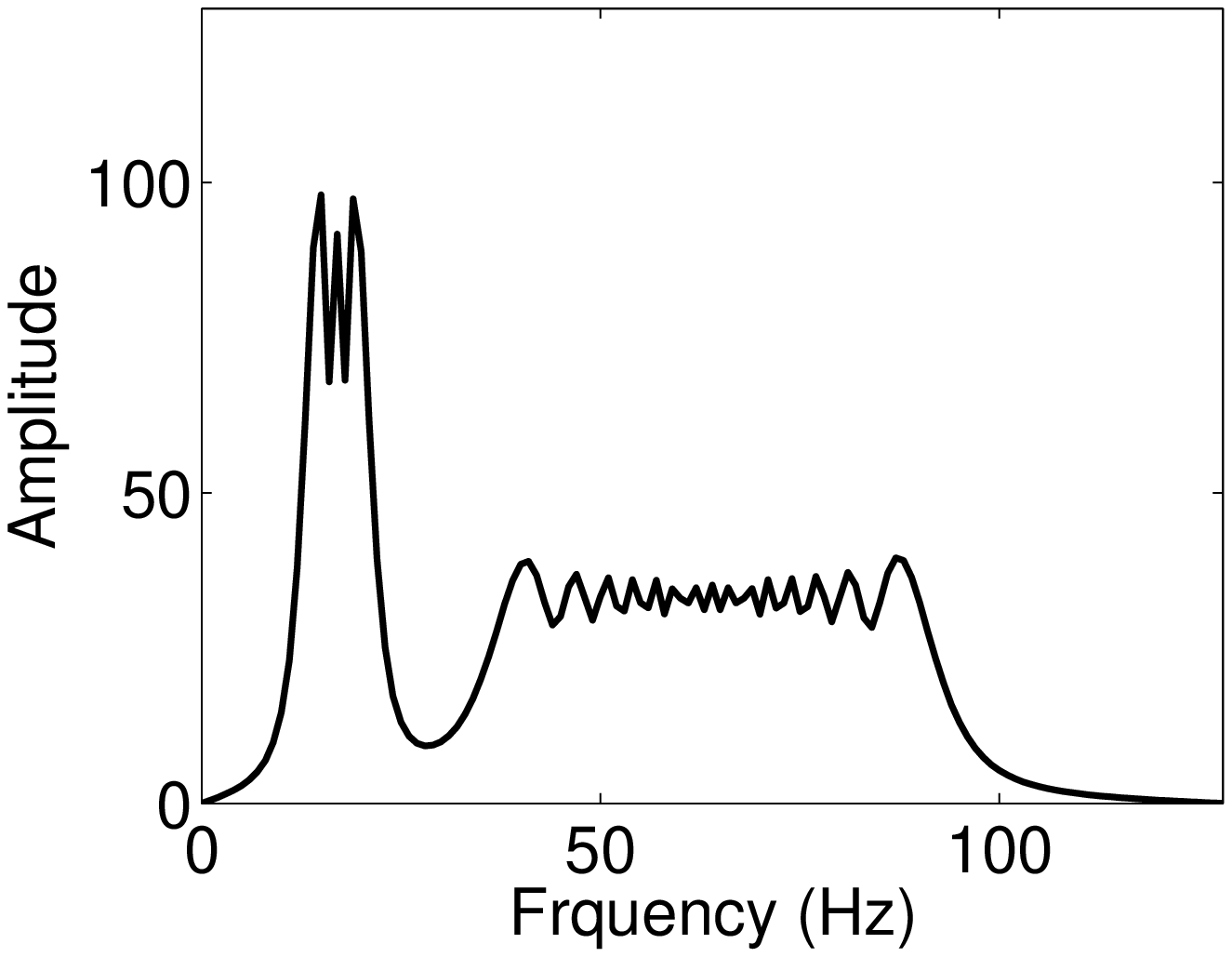}} \\ 
		\resizebox{2.1in}{1.4in}{\includegraphics{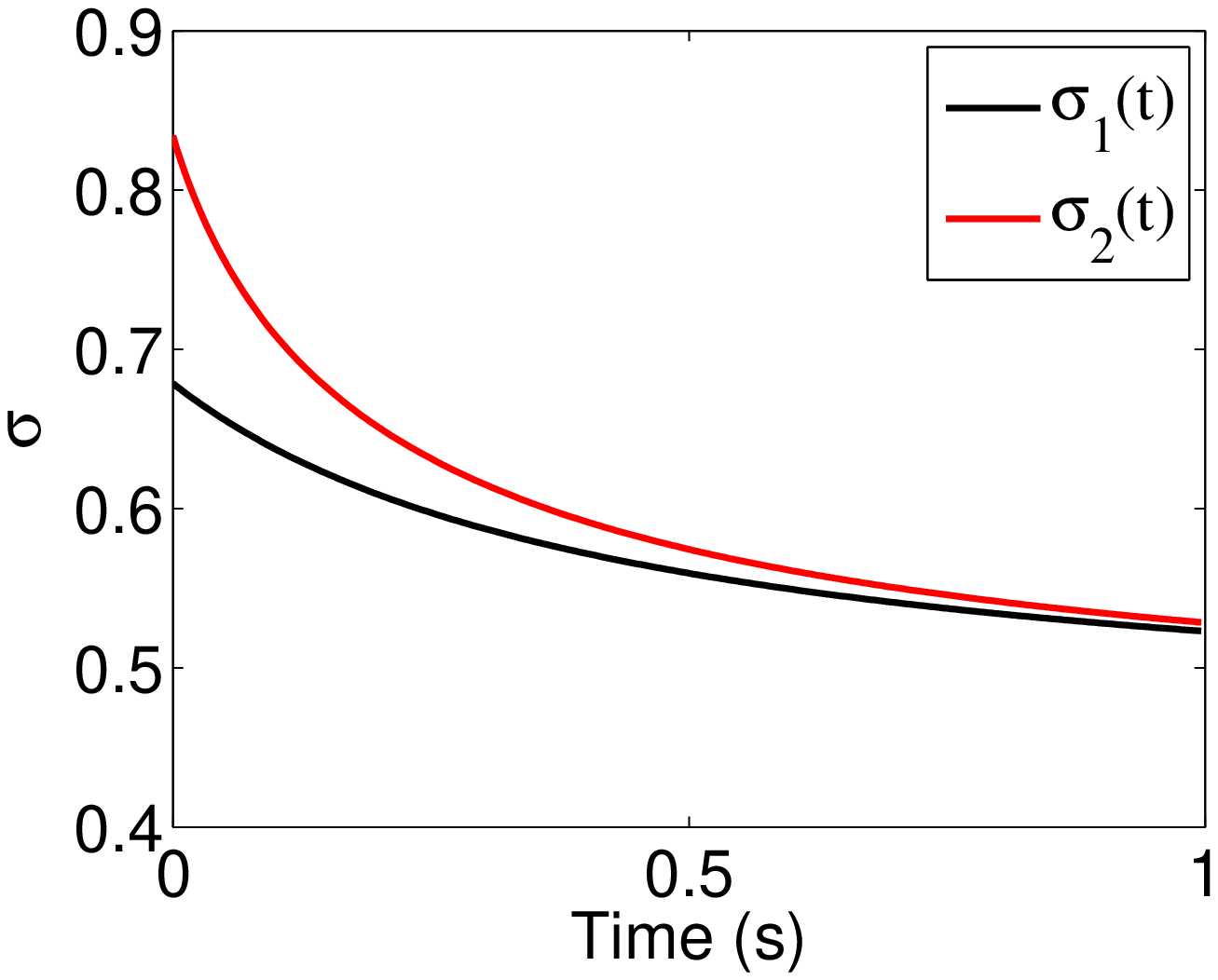}}\quad & 
		\resizebox{2.1in}{1.4in}{\includegraphics{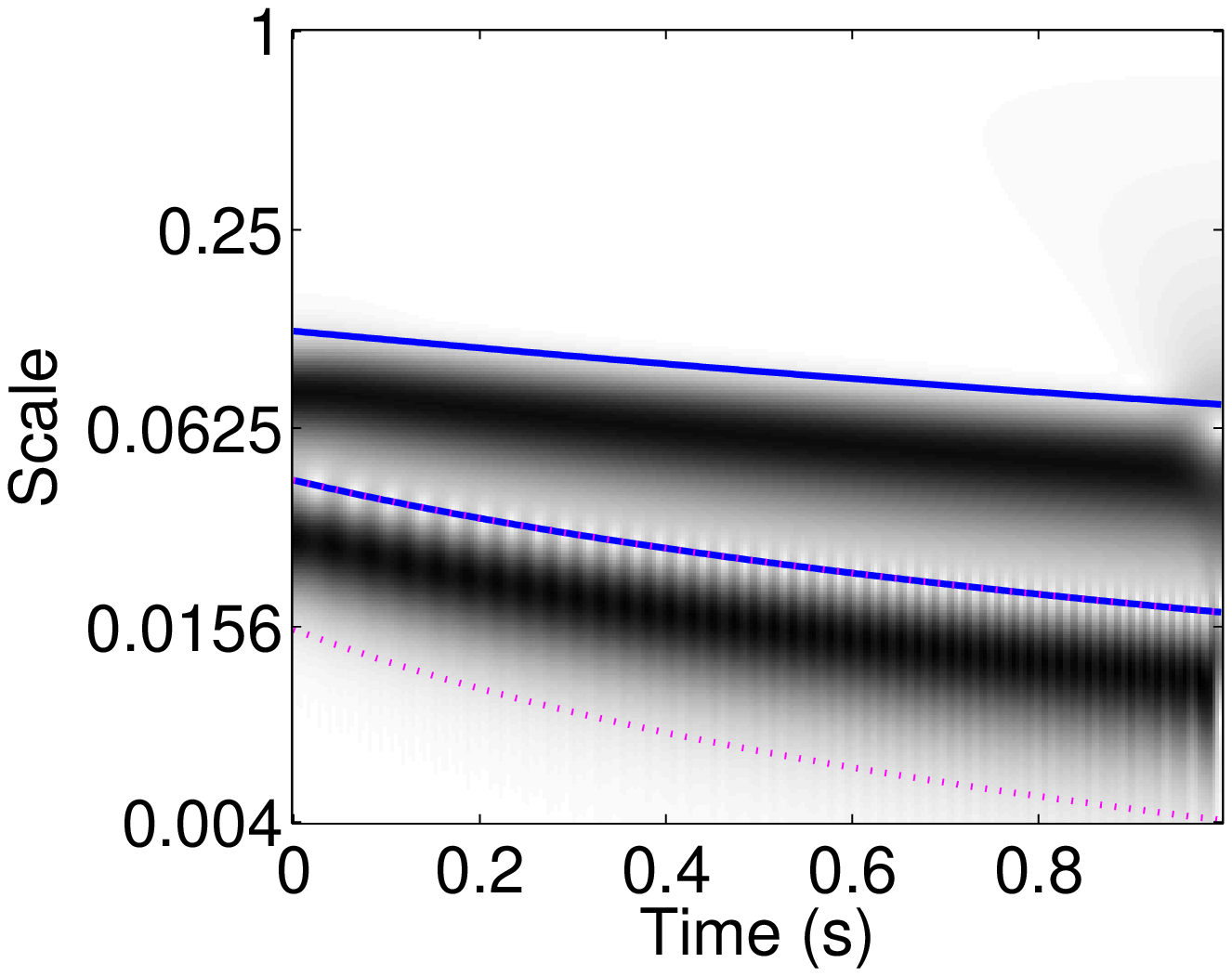}} \quad & 
		\resizebox{2.1in}{1.4in}{\includegraphics{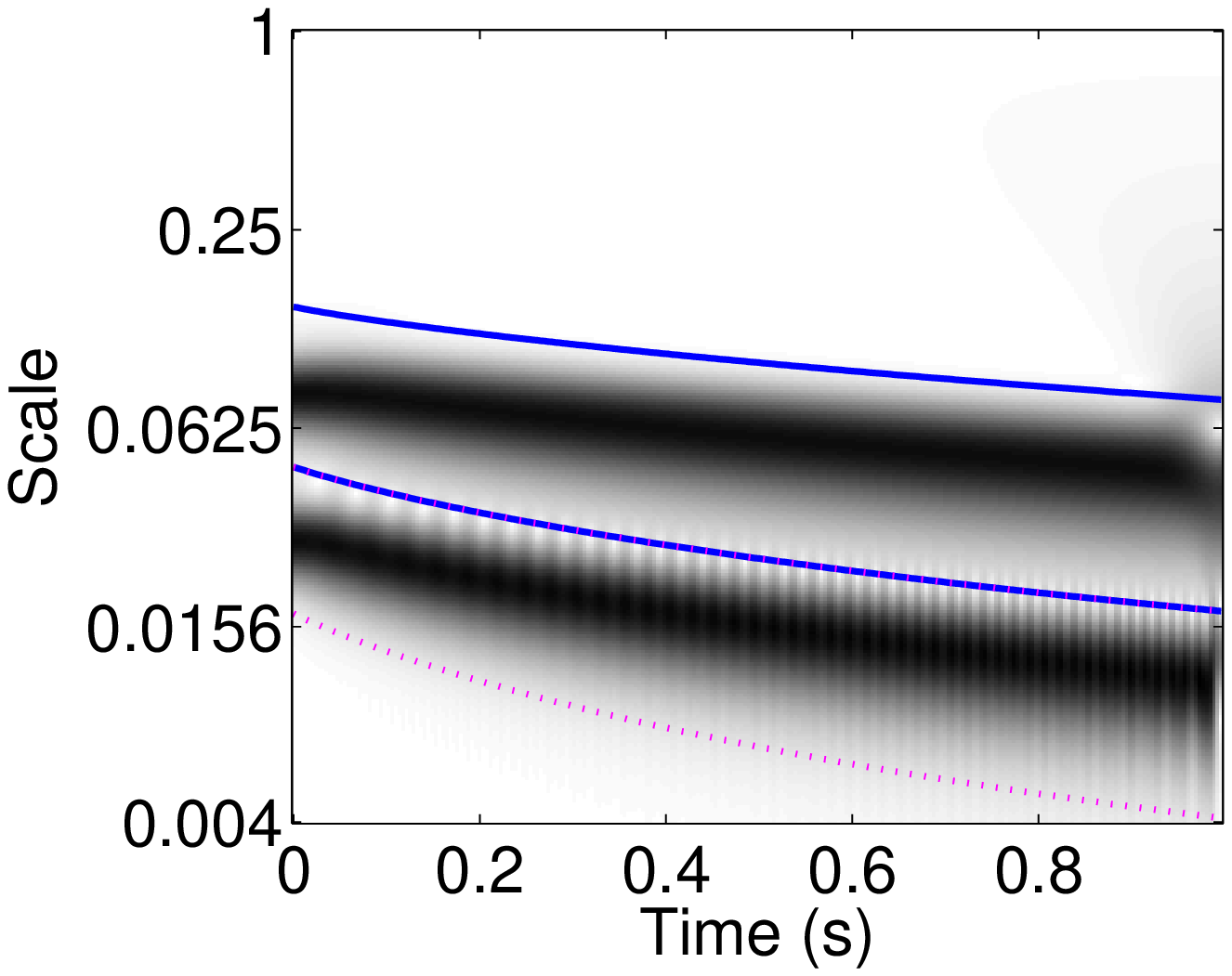}}
	\end{tabular}
	\caption{\small Example of two-component signal. 
		Top-left:  Waveform;  Top-middle: IFs; Top-right: Spectrum; 
		Bottom-left: Optimal parameter $\gs_1(b)$ and $\gs_2(b)$ with sinusoidal signal-based  and linear chirp-based models respectively; Bottom-middle: Adaptive CWLT with $\gs_1(b)$; Bottom-right: Adaptive CWLT with $\gs_2(b)$.}
	\label{fig:two_chirp_signal}
\end{figure}

\begin{figure}[H]
	\centering
	\begin{tabular}{cc}
		\resizebox{2.1in}{1.4in}{\includegraphics{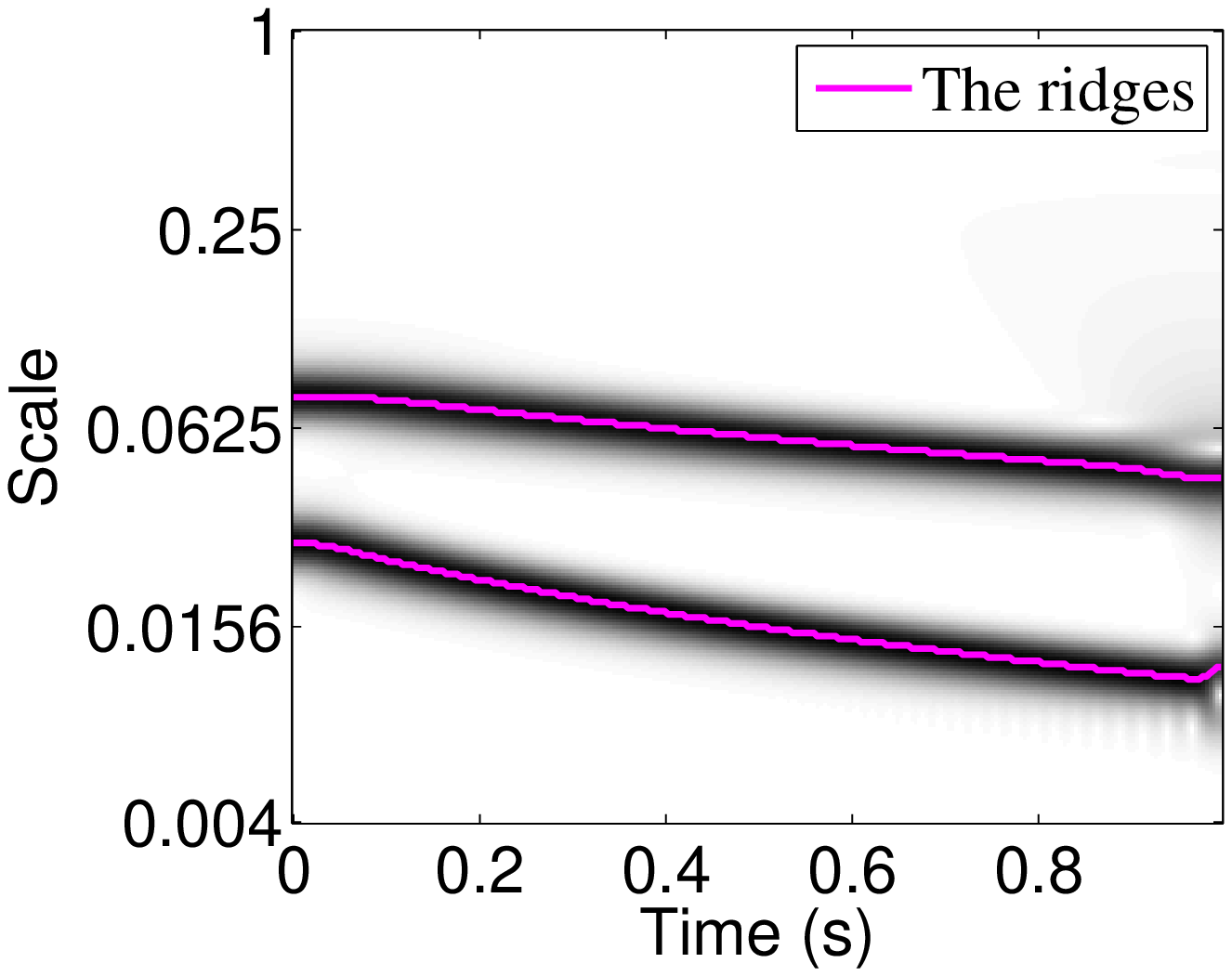}} \quad &
		\resizebox{2.1in}{1.4in}{\includegraphics{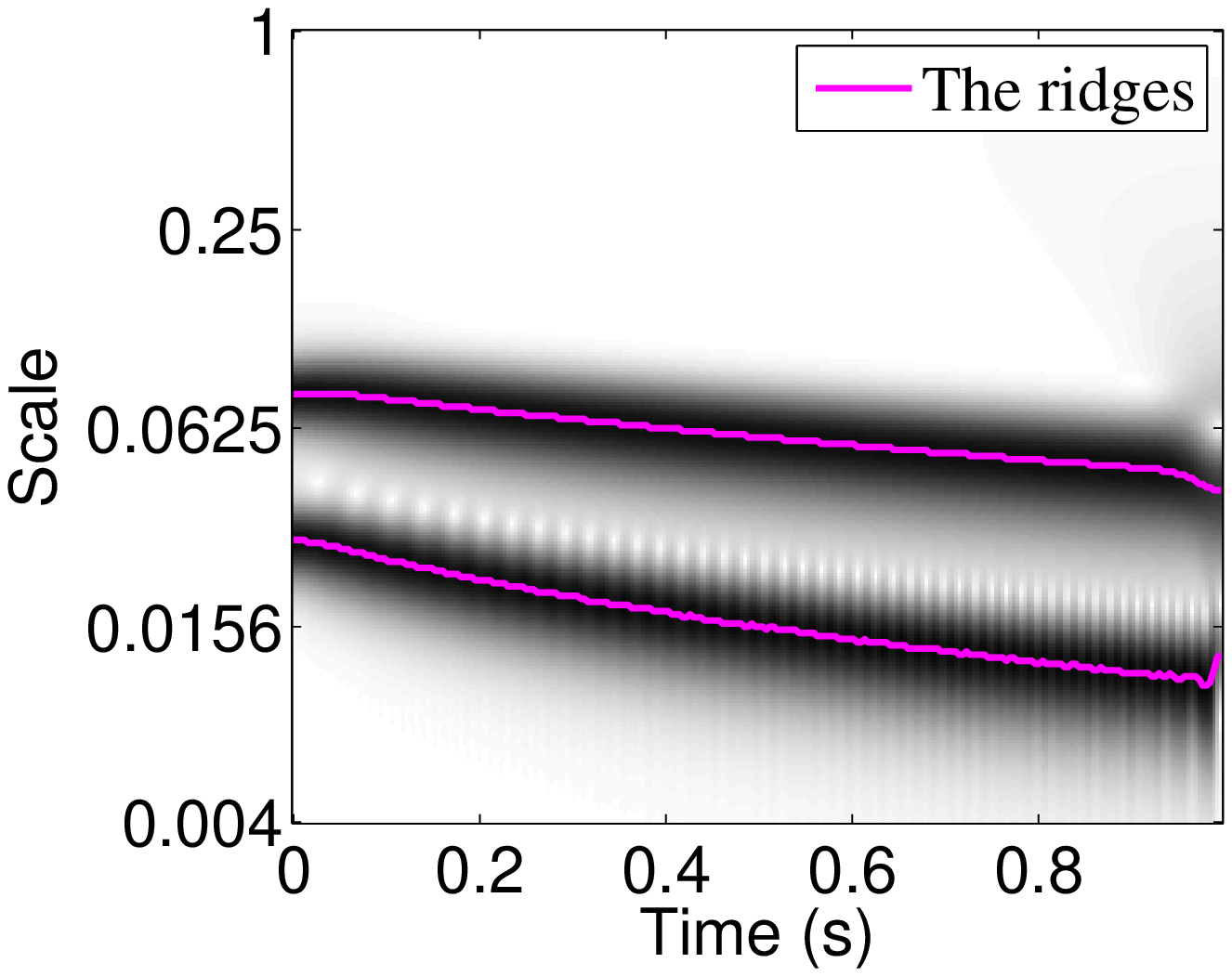}}\\
		\resizebox{2.1in}{1.4in}{\includegraphics{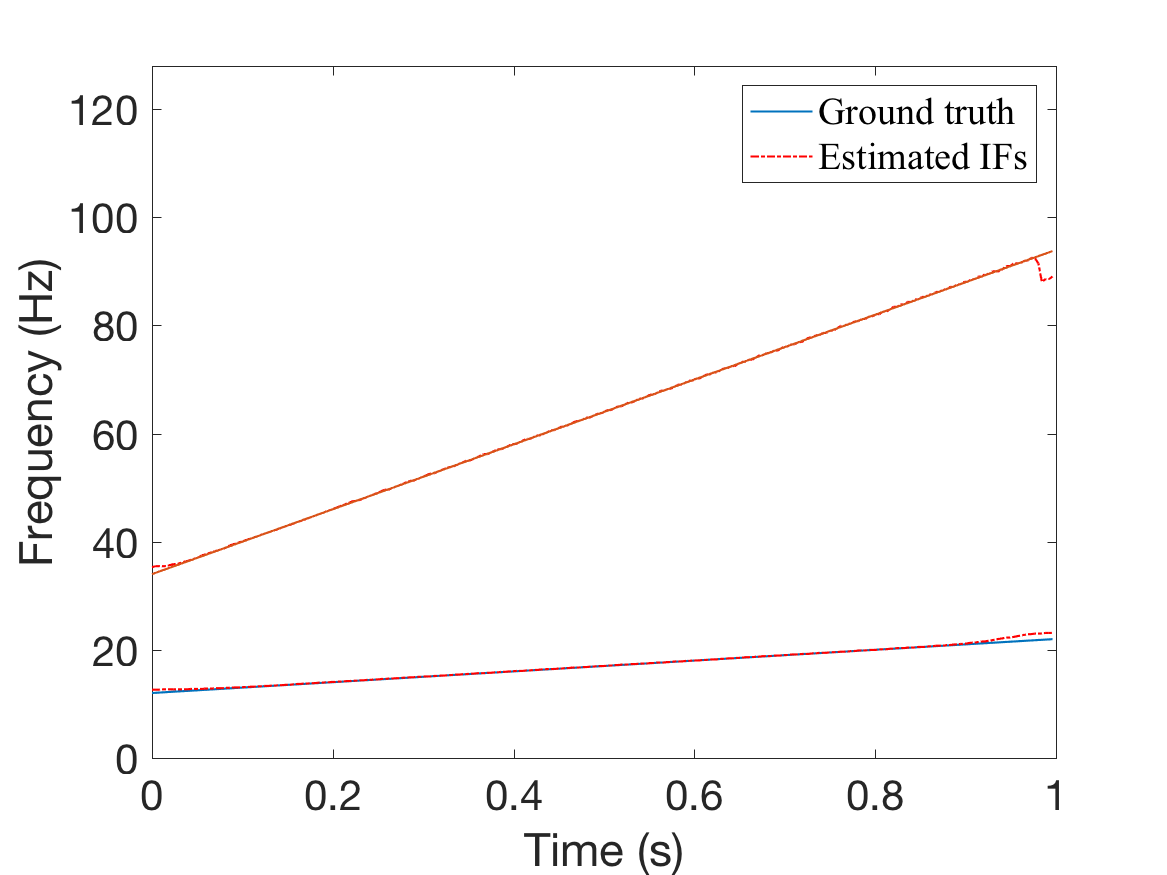}} \quad &
		\resizebox{2.1in}{1.4in}{\includegraphics{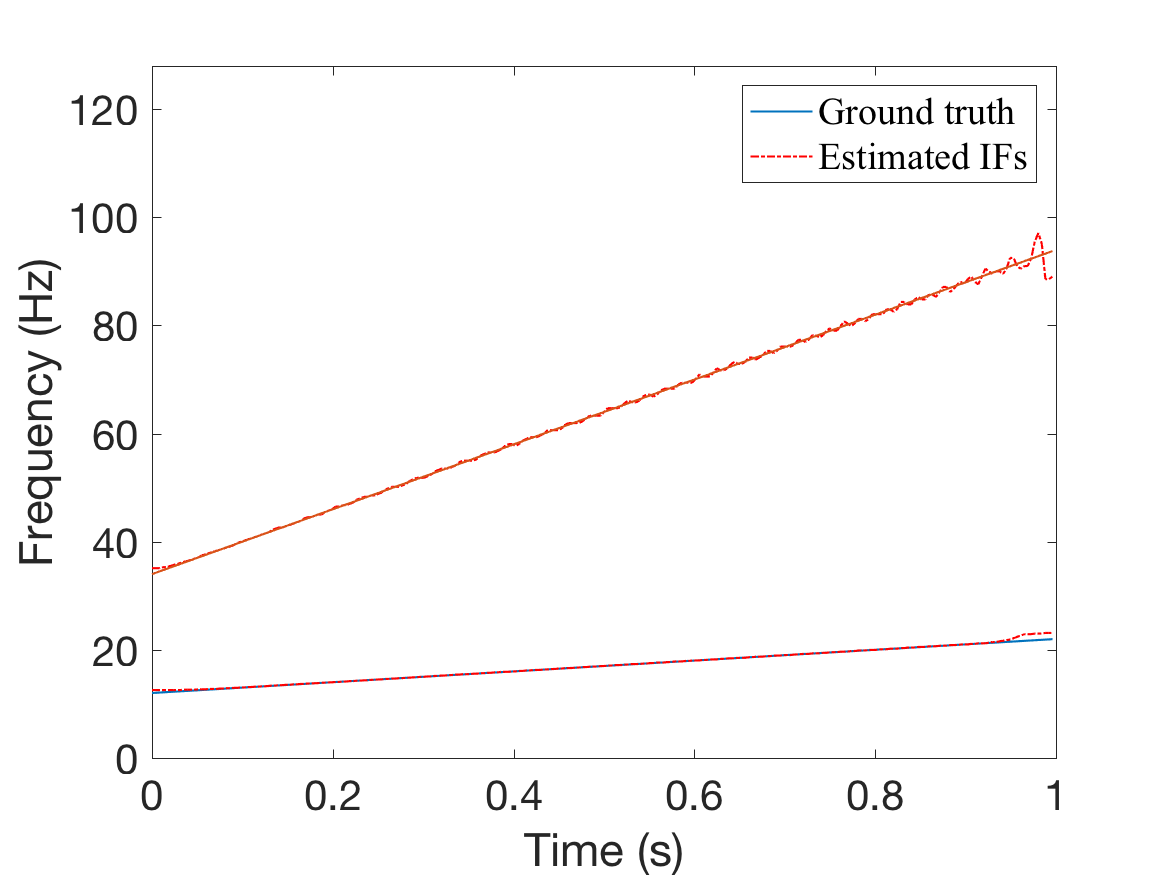}}  
	\end{tabular}
 	\caption{\small IF estimation results of  two-component signal. Top-left:  CWLT ridges 
 	with $\gs(t)=1$;   Top-right: Adaptive CWLT ridges 
 	with $\gs(t)=\gs_2(t)$;  Bottom row: IF estimates with $\gs(t)=1$ (left panel) and with $\gs(t)=\gs_2(t)$ (right panel). }
\label{fig:two_chirp_IF_estimation}
\end{figure}

First we consider a two-component linear chirp signal, 
\begin{equation}
\label{two_chirps_12_34}
x(t)=x_1(t)+x_2(t)
= \cos \left(2\pi(12t+5t^2)\right)+ \cos\left(2\pi(34t+30t^2)\right), \quad t\in [0, 1]. 
\end{equation}
The number of sampling points is $N=256$ and the sampling rate is 256Hz.
The IFs of  $x_1(t)$ and $x_2(t)$ are  $\phi'_1(t)=12+10t$ and $\phi'_2(t)=34+60t$, respectively.
Hence, the chirp rates of $x_1(t)$ and $x_2(t)$ are  $\phi_1''(t)=10$ and $\phi''_2(t)=60$, respectively. 
In Fig.\ref{fig:two_chirp_signal}, we show  the waveform of $x(t)$, IFs $\phi'_1$ and $\phi'_2$ and spectrum 
$|\wh x(\xi)|$. From its spectrum, we can say $x(t)$ is a wide-band non-stationary signal, which cannot be separated by Fourier transform.  In Fig.\ref{fig:two_chirp_signal}, we also show optimal parameters $\gs_1(b)$ and $\gs_2(b)$, and the adaptive CWLTs with $\gs_1(b)$ and $\gs_2(b)$.  The blue lines in the bottom-middle panel and bottom-right panel are the boundaries of the time-scale zones of $\wt W_{x_1}(a, b)$  and $\wt W_{x_2}(a, b)$ with $\gs(b)=\gs_1(b)$ and $\gs(b)=\gs_2(b)$ respectively.

\begin{figure}[H]
	\centering
	\begin{tabular}{cc}
		\resizebox{2.1in}{1.4in}{\includegraphics{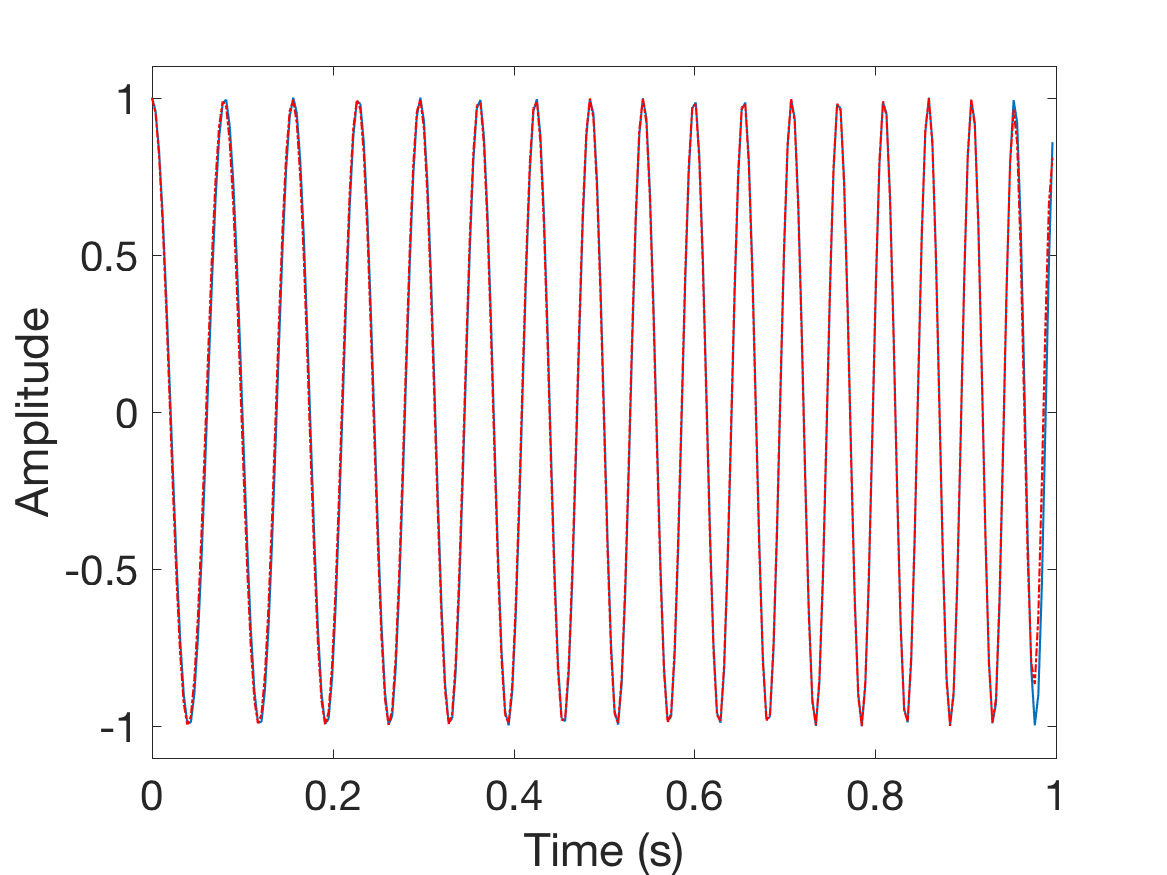}} \quad &
		\resizebox{2.1in}{1.4in}{\includegraphics{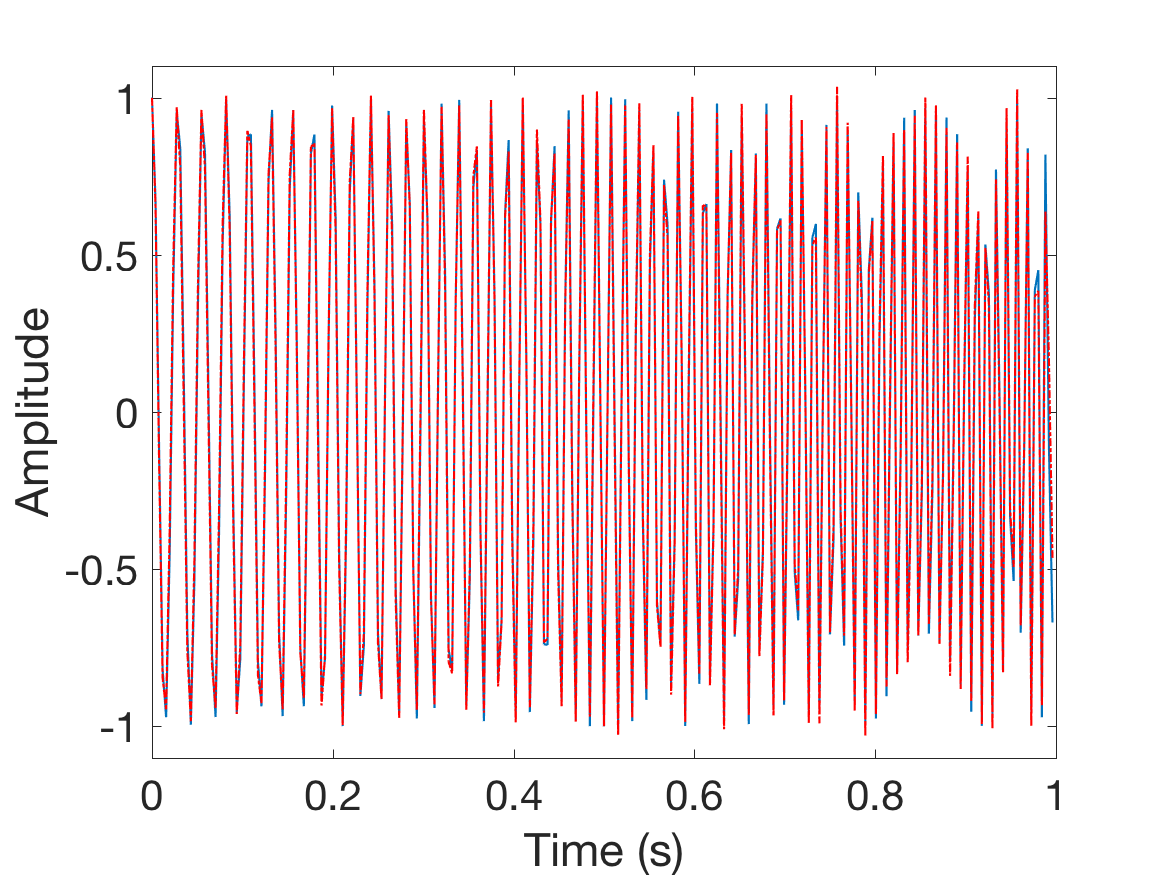}}\\
		\resizebox{2.1in}{1.4in}{\includegraphics{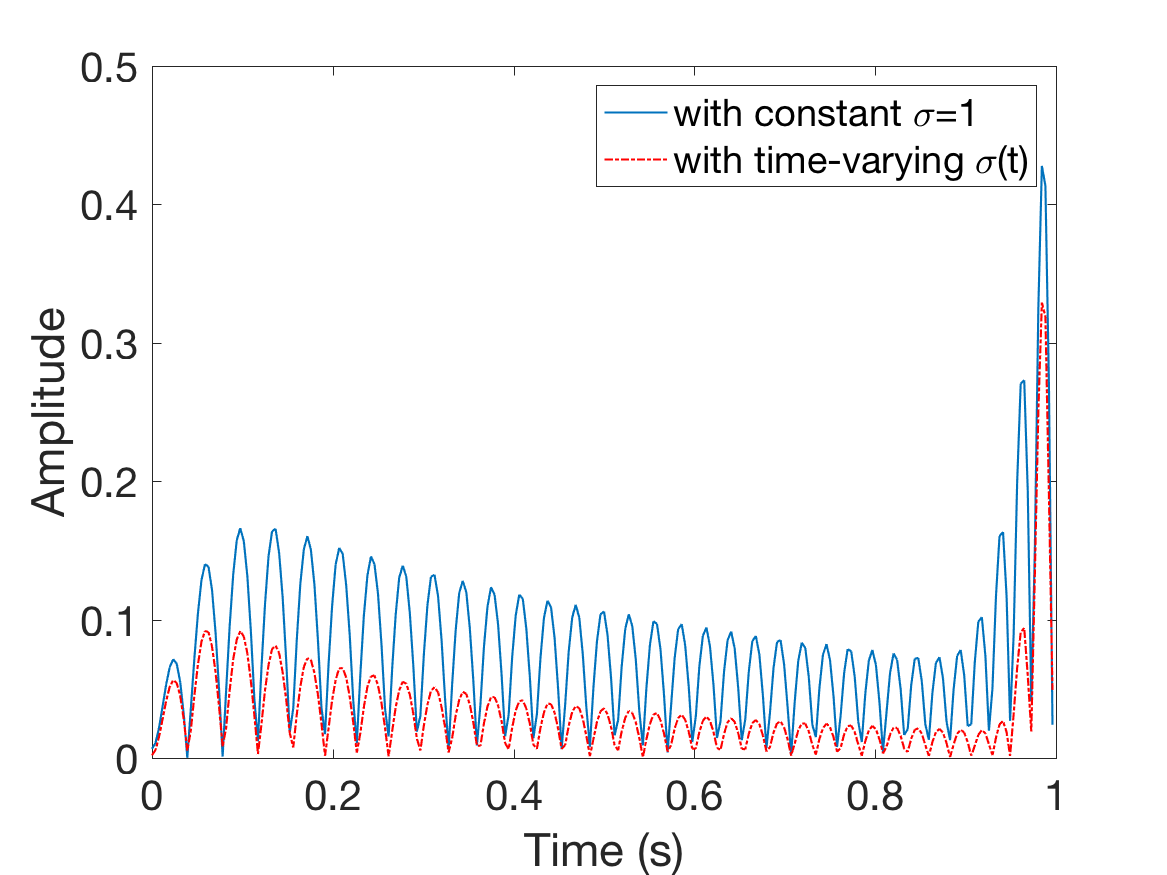}} \quad &
		\resizebox{2.1in}{1.4in}{\includegraphics{ACWT_x1error_twocomponent_gs_via_ci2_sample_1over51_Oct25_2020.png}}  
	\end{tabular}
	\caption{\small Recovery results of two-component signal. Top-left:  $x_1(t)$ and $\wt W_x(\wh a_1, t)$ (recovered $x_1(t)$) with $\gs(t)=\gs_2(t)$;  Top-right: $x_2(t)$ and $\wt W_x(\wh a_2, t)$ (recovered $x_2(t)$)  with $\gs(t)=\gs_2(t)$; Bottom row: Recovery error of $x_1(t)$ (left panel) and $x_2(t)$ (right panel)  with $\gs(t)=1$ and $\gs(t)=\gs_2(t)$.
     }
	\label{fig:two_chirp_recovered}
\end{figure}

In Fig.\ref{fig:two_chirp_IF_estimation}, we show the IF estimation results of the two-component signal $x(t)$ by our method. More precisely, in the top row of Fig.\ref{fig:two_chirp_IF_estimation} are $\wh a_1(t), \wh a_2(t)$
with $\gs(t)=1$ (left panel) and 
with $\gs(t)=\gs_2(t)$ (right panel), 
while  $1/\wh a_1(t), 1/\wh a_2(t)$ with $\gs(t)=1$  and $\gs(t)=\gs_2(t)$ 
are presented in the bottom-left panel and bottom-right panel respectively. Clearly,  
for the choice of $\gs(t)=1$  or for $\gs(t)=\gs_2(t)$, 
$1/\wh a_1(t)$ and  $1/\wh a_2(t)$ 
give nice estimates to $\phi'_1(t)$ and $\phi'_2(t)$ respectively.

In Fig.\ref{fig:two_chirp_recovered}, we provide component recovery results of the two-component signal. 
In top-left panel, we show  $x_1(t)$ and $\wt W_x(\wh a_1, t)$, recovered $x_1(t)$, with $\gs(t)=\gs_2(t)$;  while in the top-right panel we present $x_2(t)$ and $\wt W_x(\wh a_2, t)$, recovered $x_2(t)$,  with $\gs(t)=\gs_2(t)$. In the bottom row of Fig.\ref{fig:two_chirp_recovered} are 
recovery error $|x_1(t)-\wt W_x(\wh a_1, t)|$  (left panel) and $|x_2(t)-\wt W_x(\wh a_2, t)|$  (right panel) with $\gs(t)=1$ (blue line) and $\gs(t)=\gs_2(t)$ (red dash-dotted line). Observe that  the recovery error with $\gs(t)=\gs_2(t)$ is much smaller than that with $\gs(t)=1$.

Next let look at the performance of the  linear chirp-based model in component recovery. In the top-left panel of Fig.\ref{fig:two_chirp_recovered_LFM}, we show the recovery error for $x_1(t)$ by  
the linear chirp-based model with $\gs=\gs_2(t)$ and ground truth $\phi^{\gp \gp}_1(t)$ in \eqref{comp_xk_est_2nd1_real_gaussian}, while the  recovery error for $x_2(t)$ is provided in the top-right panel of Fig.\ref{fig:two_chirp_recovered_LFM}. In practice, we do not know $\phi^{\gp \gp}_\ell(t)$. However, we may apply a numerical  algorithm to $\wh \mu/a_\ell(t_m)$ or  $\mu/\wc a_\ell(t_m)$ to obtain an estimate of  $\phi^{\gp\gp}_\ell(t_m)$, where 
$t_m, m=0,1, \cdots, $ are the sample points of the time $t$.
 Here we use a five-point formula for differentiation (see, e.g. \cite{BF11_book})  to  
obtain an approximation to $\phi^{\gp\gp}_\ell(t_m)$. In the bottom row  of Fig.\ref{fig:two_chirp_recovered_LFM}, we show the recovery errors for $x_1(t)$ (left panel) and $x_2(t)$ (right panel) by linear chirp-based model with 
an estimated $\phi^{\gp\gp}_1(t), \phi^{\gp\gp}_2(t),$ in \eqref{comp_xk_est_2nd1_real_gaussian} (red dash-dotted line). 
From Fig.\ref{fig:two_chirp_recovered_LFM}, we see the linear chirp-based model leads more accurate component recovery. In particular the error for $x_1(t)$ is almost zero (except near the endpoints). 
\begin{figure}[H]
	\centering
	\begin{tabular}{cc}
		\resizebox{2.1in}{1.4in}{\includegraphics{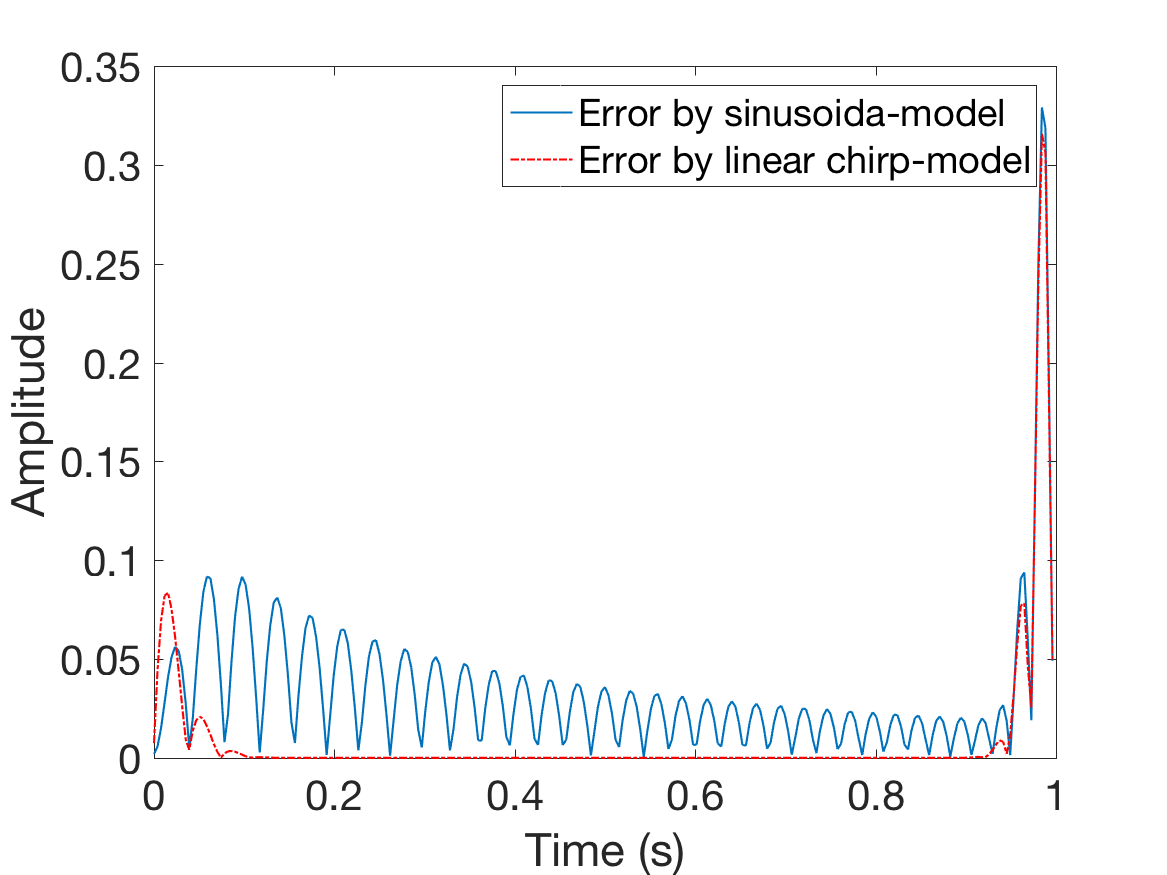}} \quad &
\resizebox{2.1in}{1.4in}{\includegraphics{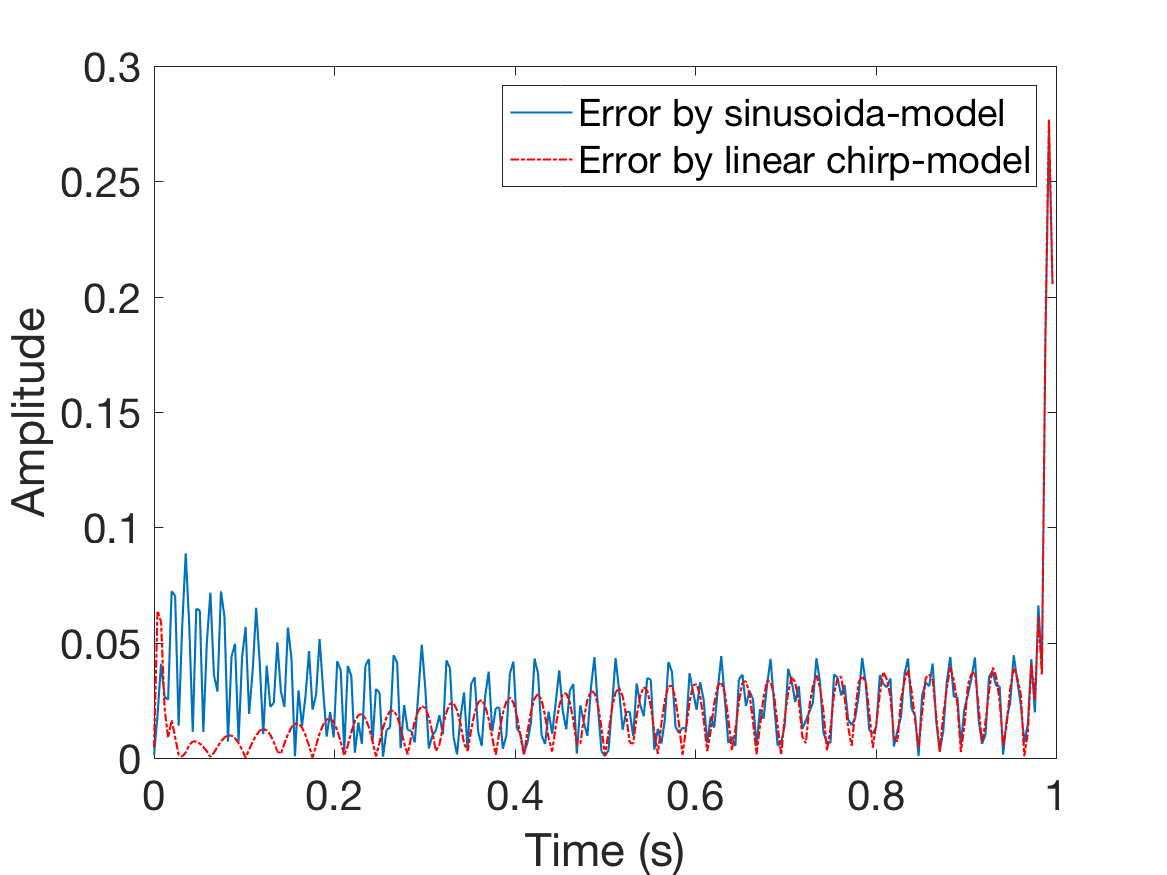}}
		\\
\resizebox{2.1in}{1.4in}{\includegraphics{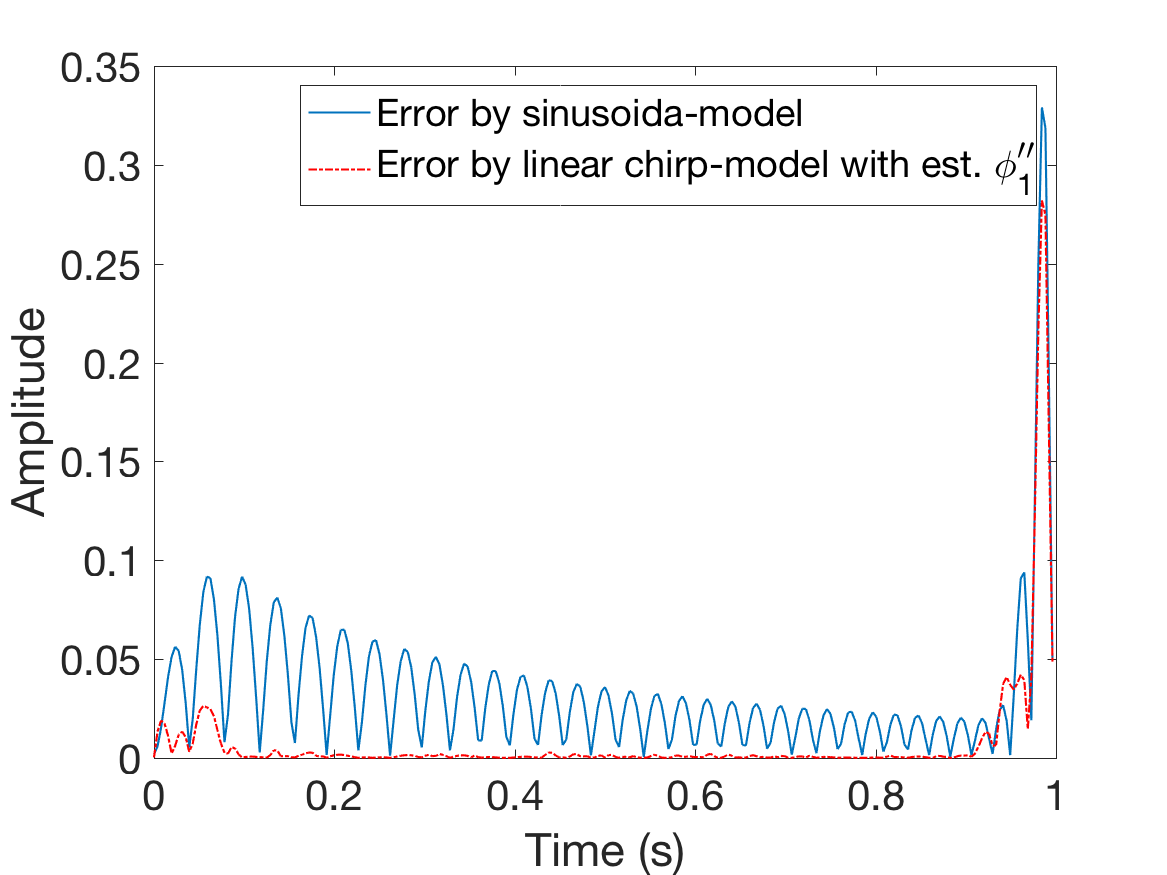}} \quad &\resizebox{2.1in}{1.4in}{\includegraphics{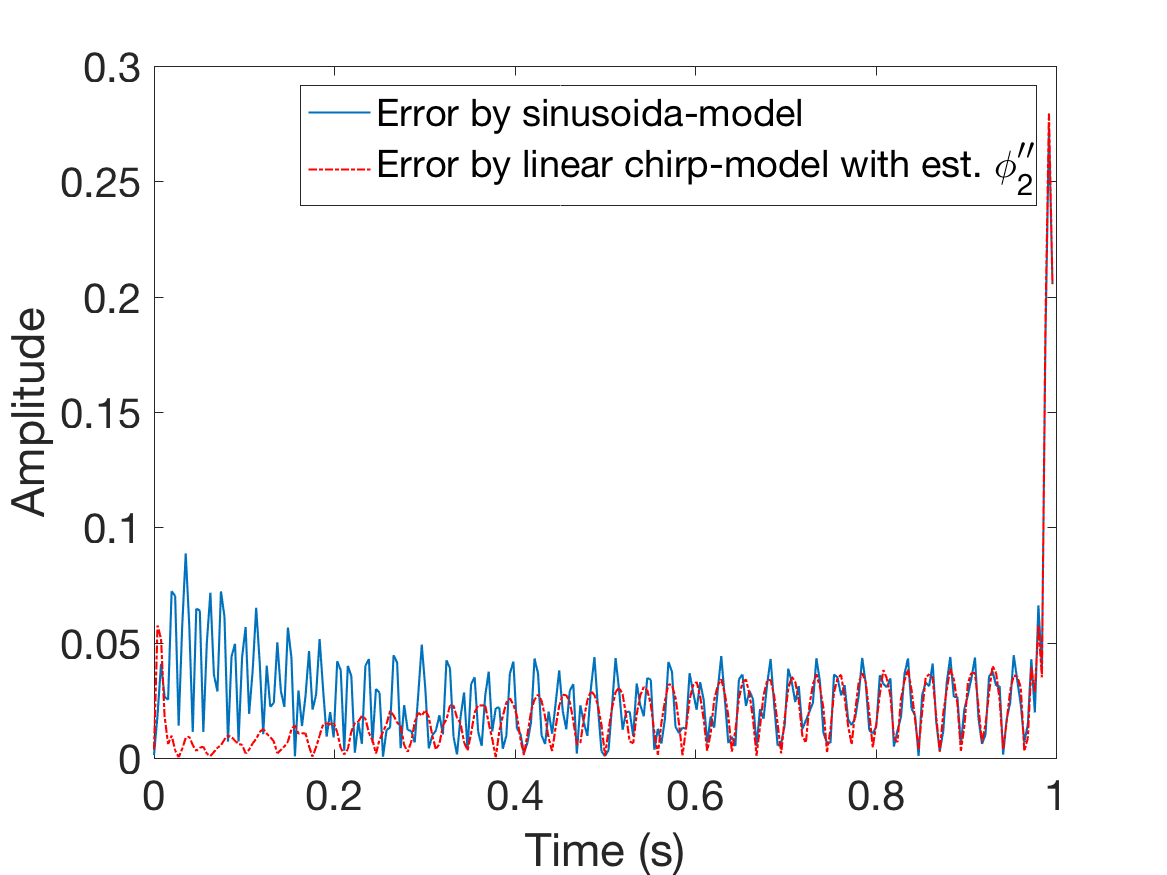}}
	\end{tabular}
	\caption{\small Recovery errors of two-component signal. Top row:  recovery errors by  sinusoidal signal-based model (blue line) and by linear chirp-based model with ground truth $\phi^{\gp\gp}_\ell(t)$ in \eqref{comp_xk_est_2nd1_real_gaussian} (red dash-dotted line) for  $x_1(t)$ (left panel) and for $x_2(t)$ (right panel); Bottom row:  recovery errors by  sinusoidal signal-based model (blue line) and by linear chirp-based model with 
an estimated $\phi^{\gp\gp}_\ell(t)$ in \eqref{comp_xk_est_2nd1_real_gaussian} (red dash-dotted line) for  $x_1(t)$ (left panel) and for $x_2(t)$ (right panel)
     }
	\label{fig:two_chirp_recovered_LFM}
\end{figure}

We also consider these two models in a noise environment. Two white Gaussian noises with SNR to be 20dB and 15dB respectively are added to the two-component signal  $x(t)$ given by \eqref{two_chirps_12_34}.  Fig.\ref{fig:two_chirp_recovered_LFM_noise20dB} and Fig.\ref{fig:two_chirp_recovered_LFM_noise15dB}
show the errors of component recovery. Again in these two cases the linear chirp-based model performs better.

In the following we consider a three-component signal with one harmonic and two nonlinear frequency modulation modes,
	\begin{equation*}
	\label{three_nonlinear_FM}
	\begin{array}{l}
	y(t)=y_1(t)+y_2(t)+y_3(t)
	\\ \qquad 
	=\cos \left(60\pi t\right)+ \frac 23\cos \big(96\pi t+4\cos(30\pi t)\big) +\frac 12\cos \big(148\pi t+3\cos(30\pi t)\big), 
	\quad t\in [0, 10]. 
	\end{array}
	\end{equation*}
 The number of sampling points is $N=512$, namely sampling rate is $F_s=51.2$ Hz.

\begin{figure}[H]
	\centering
	\begin{tabular}{cc}
		\resizebox{2.1in}{1.4in}{\includegraphics{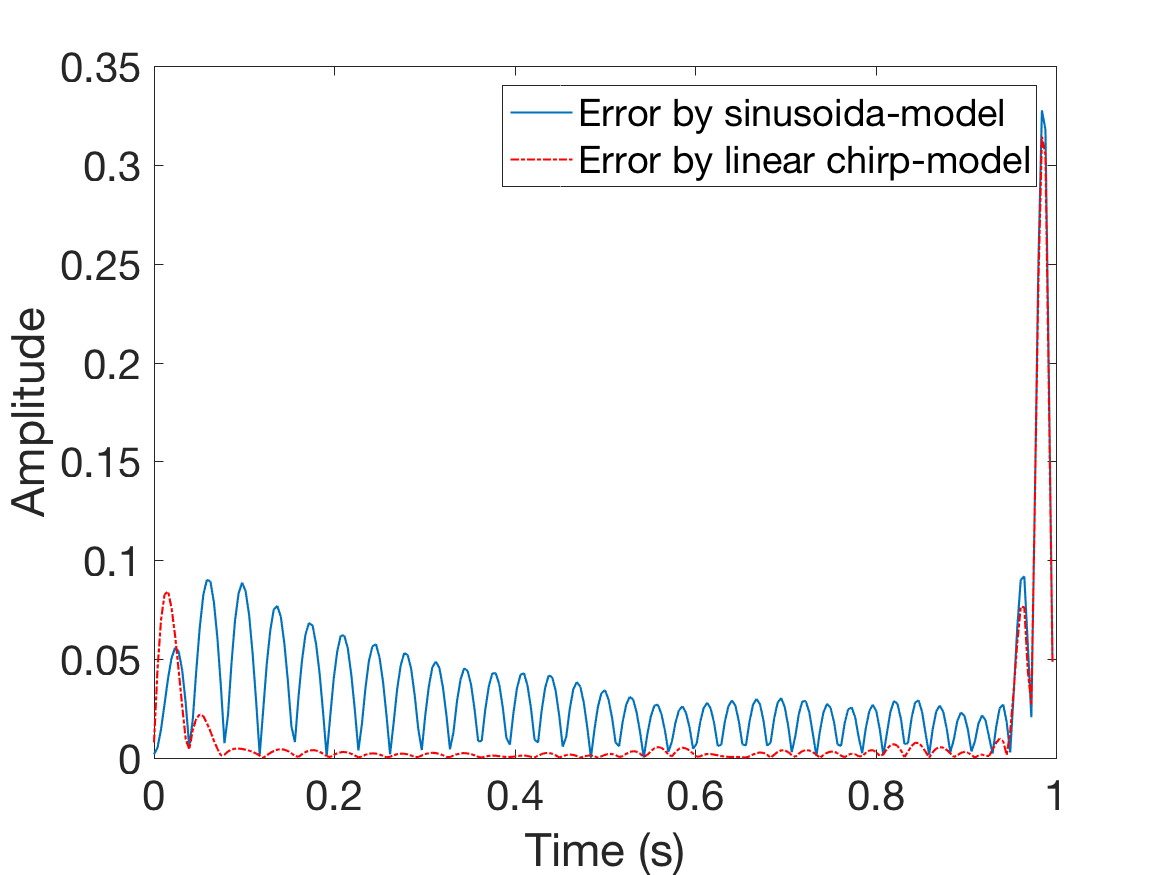}} \quad &
\resizebox{2.1in}{1.4in}{\includegraphics{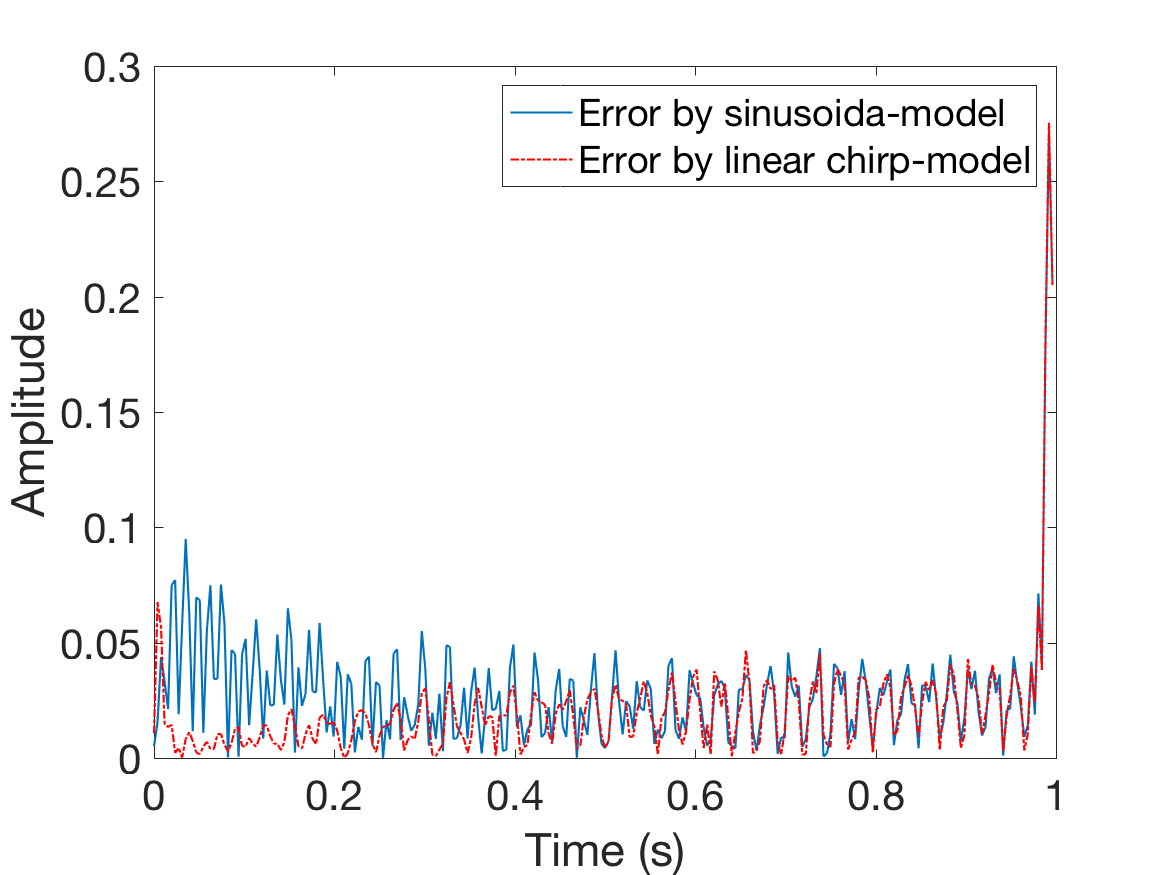}}
		\\
\resizebox{2.1in}{1.4in}{\includegraphics{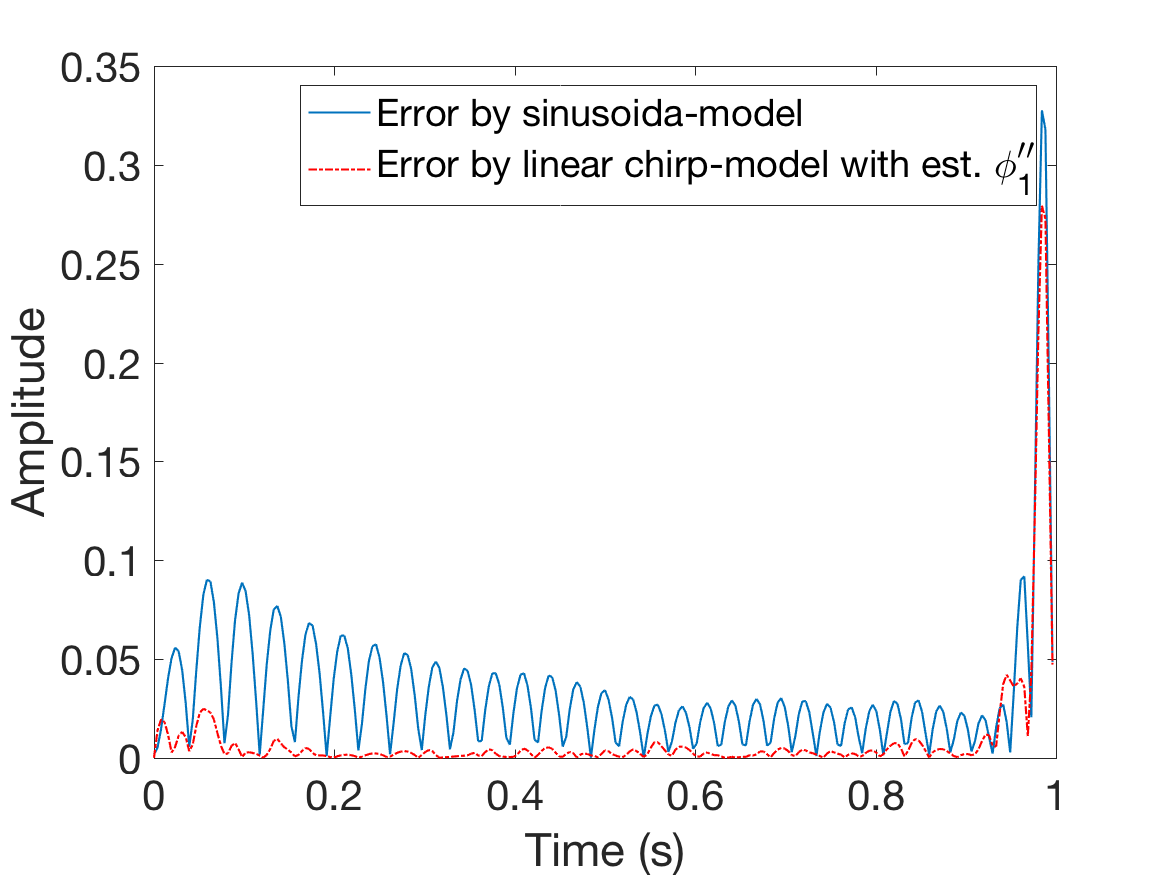}} \quad &\resizebox{2.1in}{1.4in}{\includegraphics{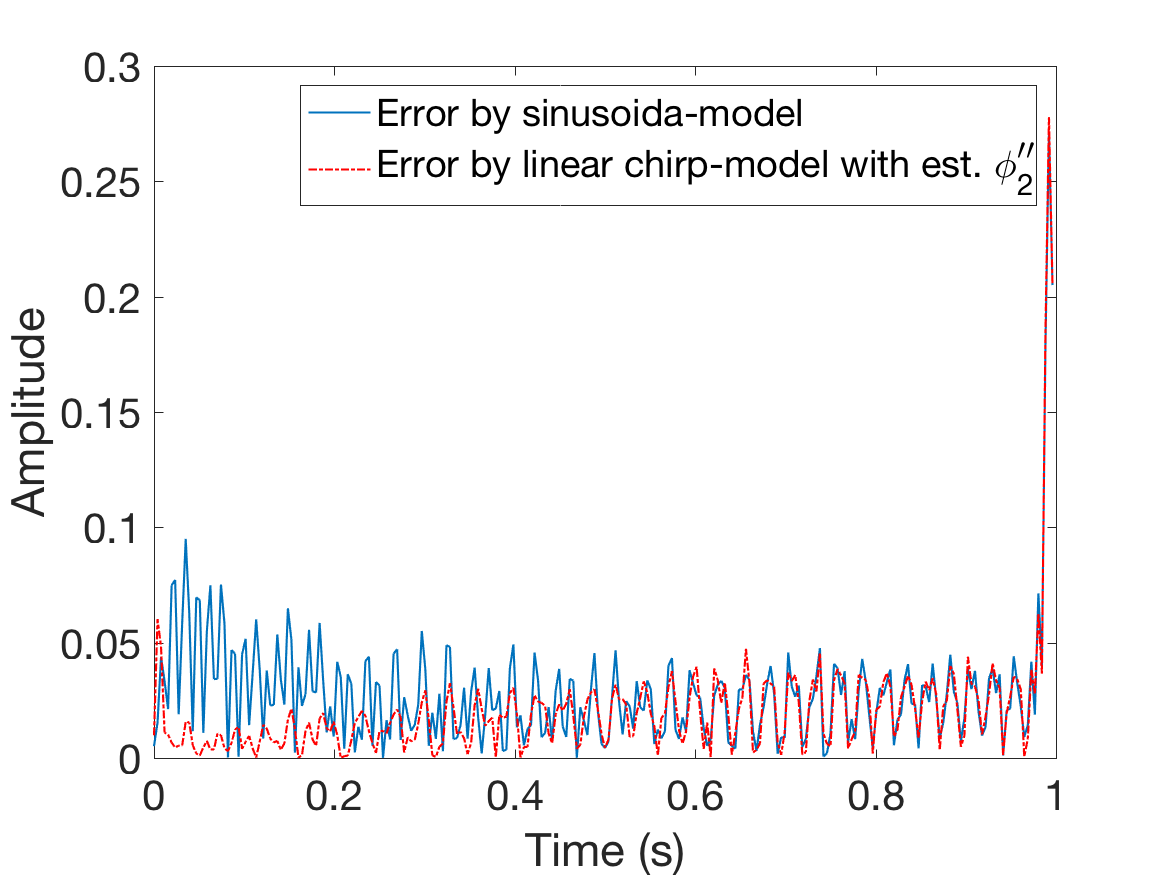}}
	\end{tabular}
	\caption{\small Recovery errors of two-component signal with noise SNR=20dB. Top row:  recovery errors by  sinusoidal signal-based model (blue line) and by linear chirp-based model with ground truth $\phi^{\gp\gp}_\ell(t)$ in \eqref{comp_xk_est_2nd1_real_gaussian} (red dash-dotted line) for  $x_1(t)$ (left panel) and for $x_2(t)$ (right panel); Bottom row:  recovery errors by  sinusoidal signal-based model (blue line) and by linear chirp-based model with 
an estimated $\phi^{\gp\gp}_\ell(t)$ in \eqref{comp_xk_est_2nd1_real_gaussian} (red dash-dotted line) for  $x_1(t)$ (left panel) and for $x_2(t)$ (right panel)
     }
	\label{fig:two_chirp_recovered_LFM_noise20dB}
\end{figure}

\begin{figure}[H]
	\centering
	\begin{tabular}{cc}
		\resizebox{2.1in}{1.4in}{\includegraphics{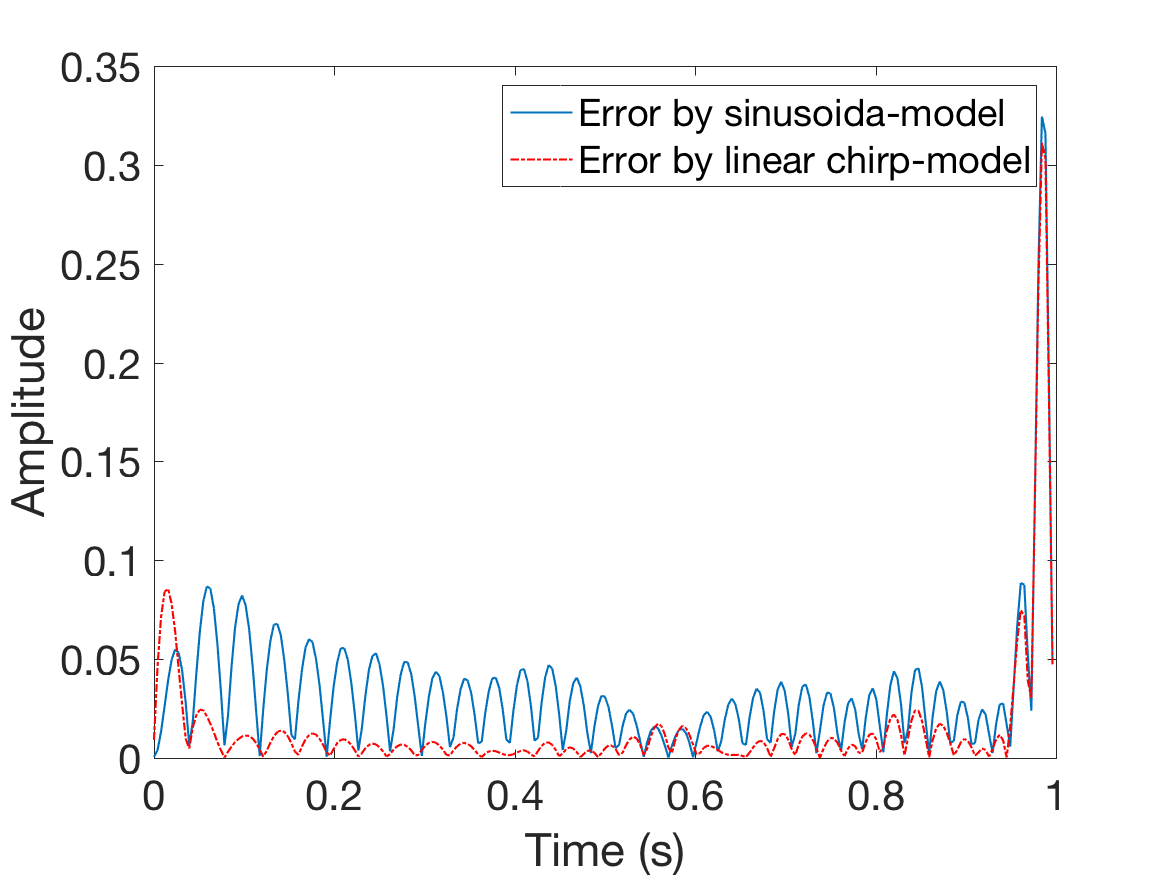}} \quad &
\resizebox{2.1in}{1.4in}{\includegraphics{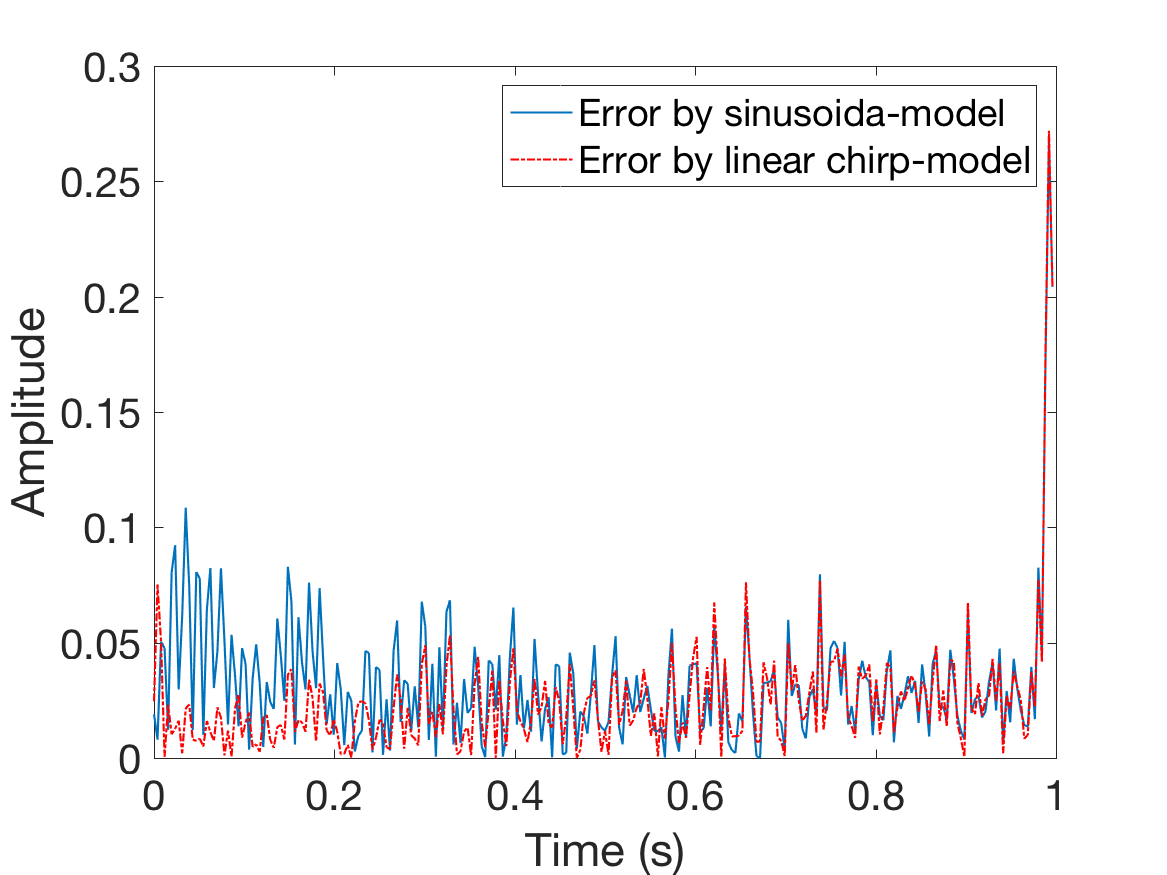}}
		\\
\resizebox{2.1in}{1.4in}{\includegraphics{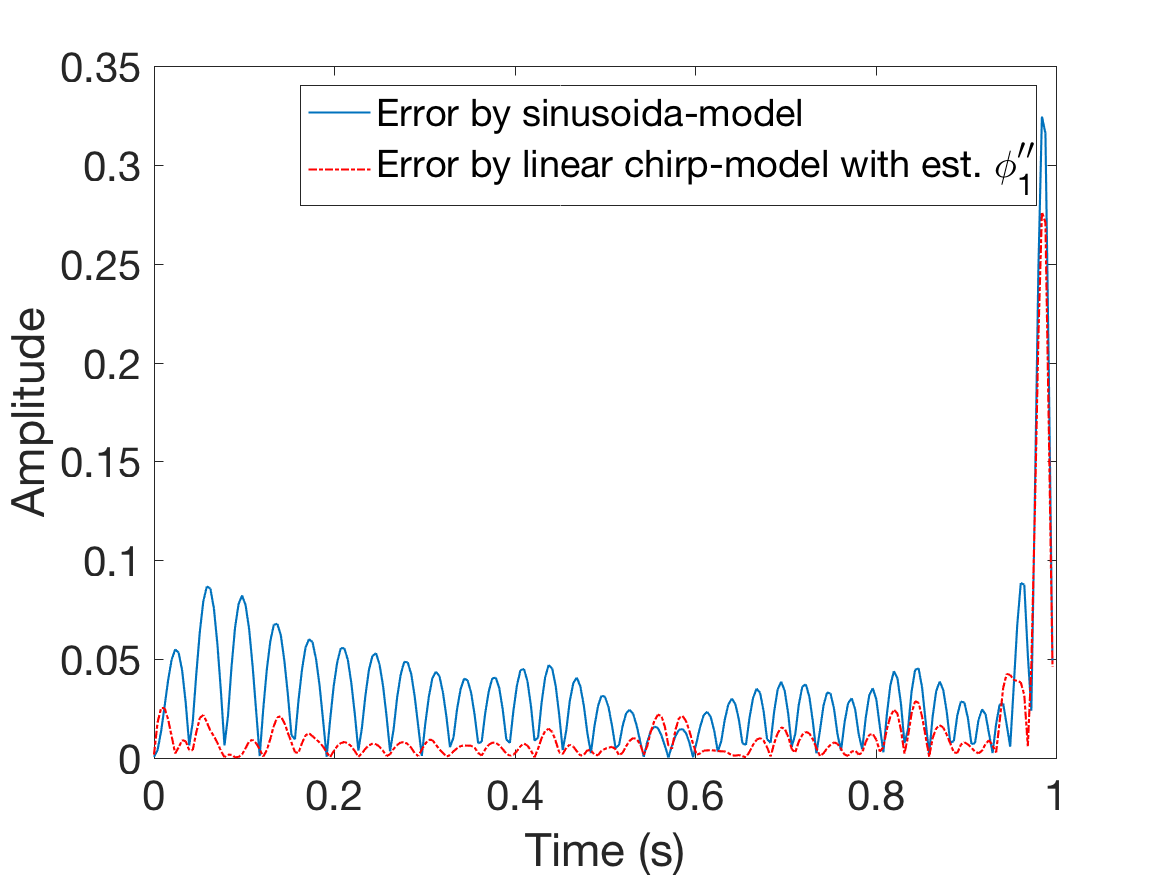}} \quad &\resizebox{2.1in}{1.4in}{\includegraphics{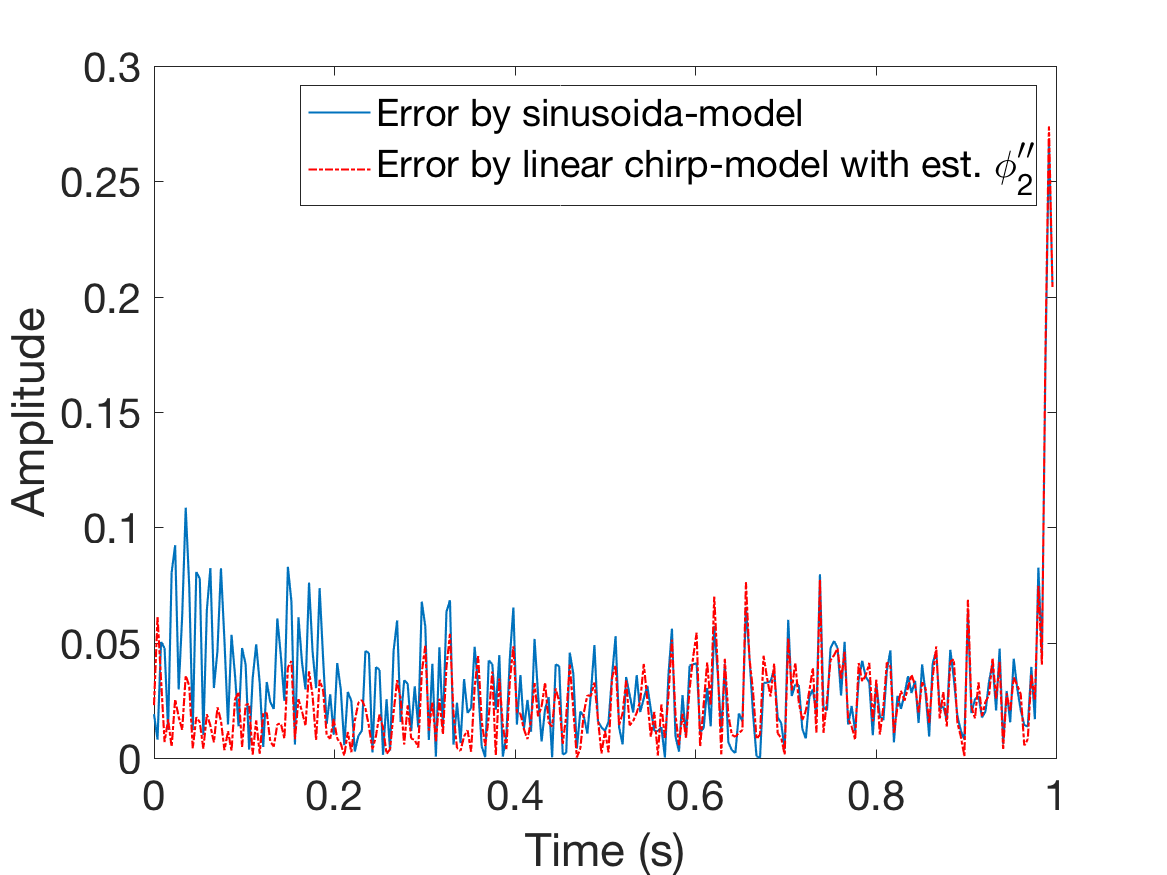}}
	\end{tabular}
	\caption{\small Recovery errors of two-component signal with noise SNR=15dB. Top row:  recovery errors by  sinusoidal signal-based model (blue line) and by linear chirp-based model with ground truth $\phi^{\gp\gp}_\ell(t)$ in \eqref{comp_xk_est_2nd1_real_gaussian} (red dash-dotted line) for  $x_1(t)$ (left panel) and for $x_2(t)$ (right panel); Bottom row:  recovery errors by  sinusoidal signal-based model (blue line) and by linear chirp-based model with 
an estimated $\phi^{\gp\gp}_\ell(t)$ in \eqref{comp_xk_est_2nd1_real_gaussian} (red dash-dotted line) for  $x_1(t)$ (left panel) and for $x_2(t)$ (right panel)
     }
	\label{fig:two_chirp_recovered_LFM_noise15dB}
\end{figure}

\clearpage
We notice that for this three-component signal and the two-component signal discussed above. There is no big difference in IF estimation and component recovery with  $\wh a_k(t)$ or $\wc a_k(t)$. 
In the top row of Fig.\ref{fig:three_component_recovered}, we show 
$1/\wh a_1(t), 1/\wh a_2(t), 1/\wh a_3(t)$ with  $\gs(t)=2.35$ (left panel) and $1/\wc a_1(t), 1/\wc a_2(t)$, $1/\wc a_3(t)$ with  $\gs(t)=\gs_2(t)$ (right panel), which are IF estimates to  $\phi'_1(t)$,  $\phi'_2(t)$  and $\phi'_3(t)$ respectively. 
Here we choose $\gs(t)=2.35$ in that it is the average value of $\gs_2(t)$ for $y(t)$. 
In the top-right panel of Fig.\ref{fig:three_component_recovered}, we show recovery errors $|y_1(t)-\wt W_y(\wh a_1, t)|$ with 
 $\gs(t)=2.35$ and $\gs(t)=\gs_2(t)$. Since $y_1(t)$ is a harmonic mode, conventional CWT can recover it very well. Next we focuss 
 on $y_2(t)$ and $y_3(t)$.

\begin{figure}[th]
	\centering
	\begin{tabular}{ccc}	
	\resizebox{2.1in}{1.4in} {\includegraphics{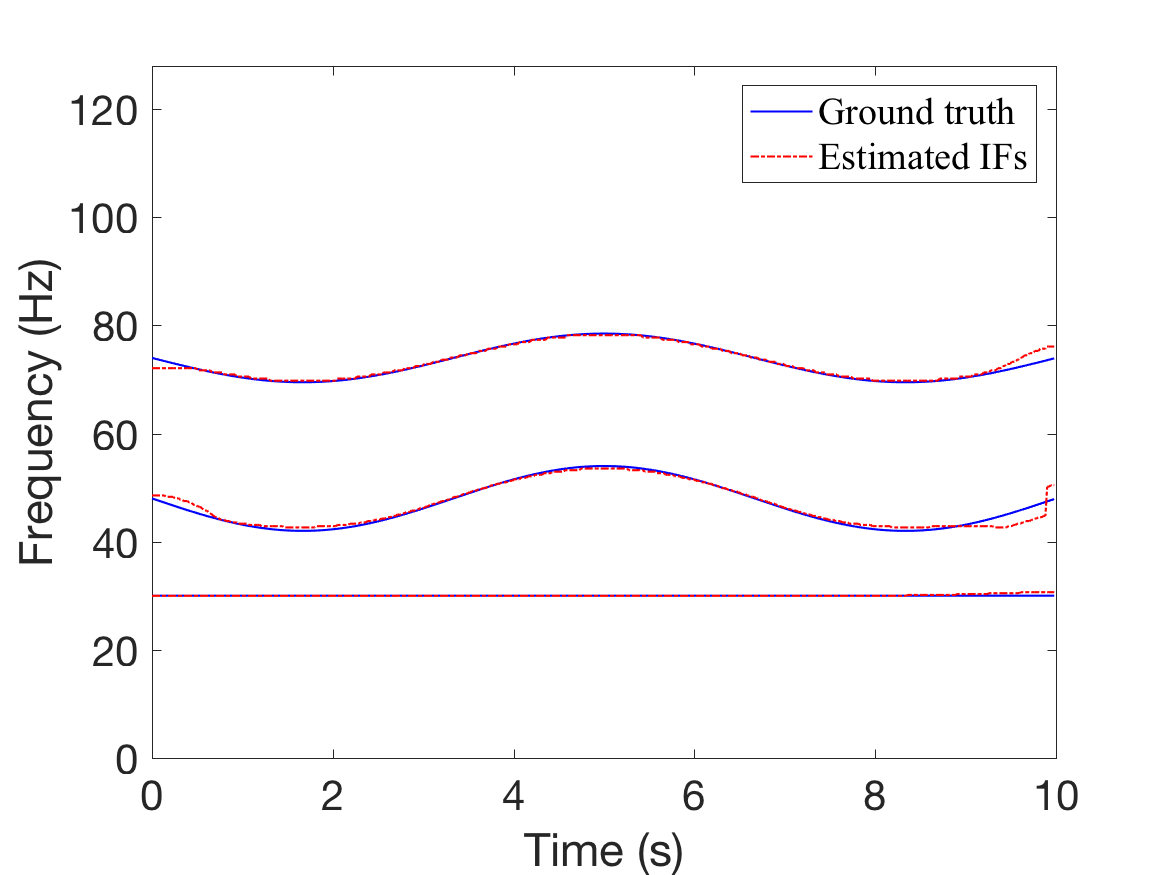}} &
	\resizebox{2.1in}{1.4in}{\includegraphics{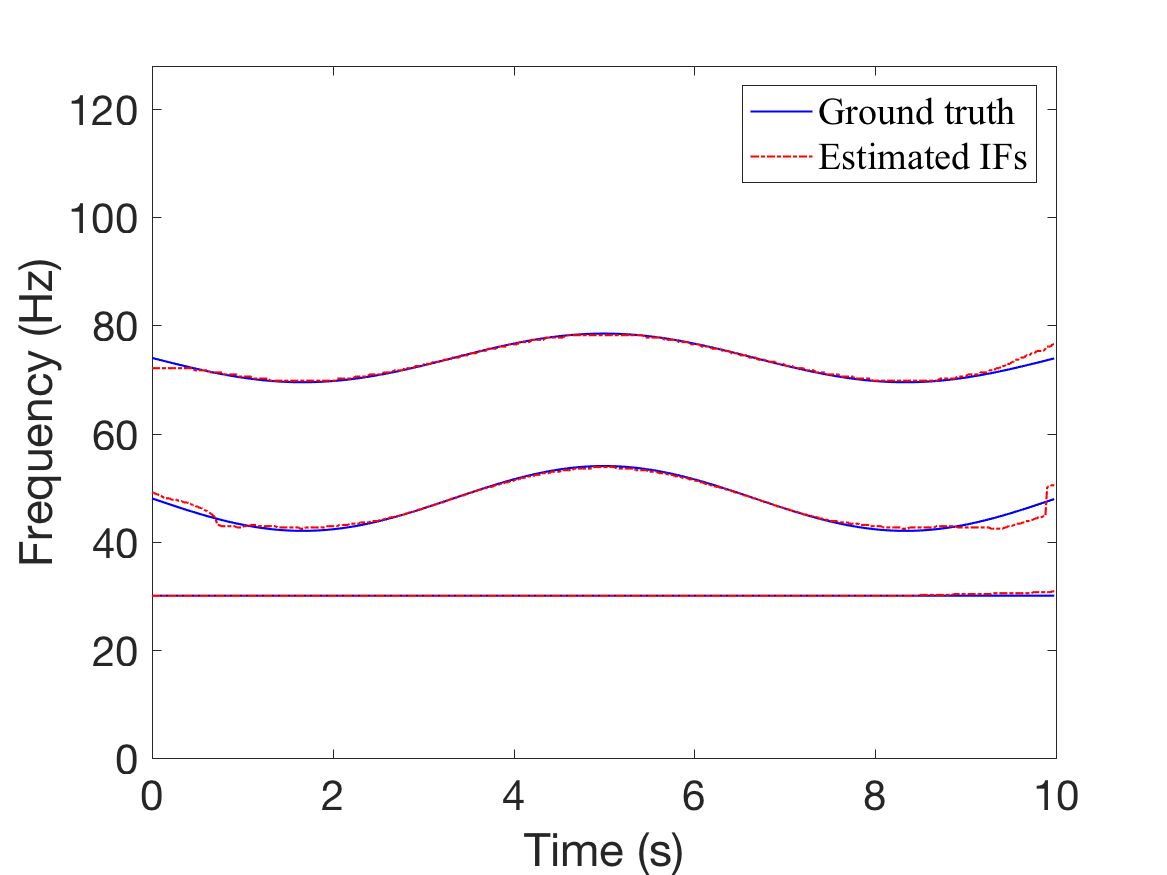}} 
	&
	\resizebox{2.1in}{1.4in}{\includegraphics{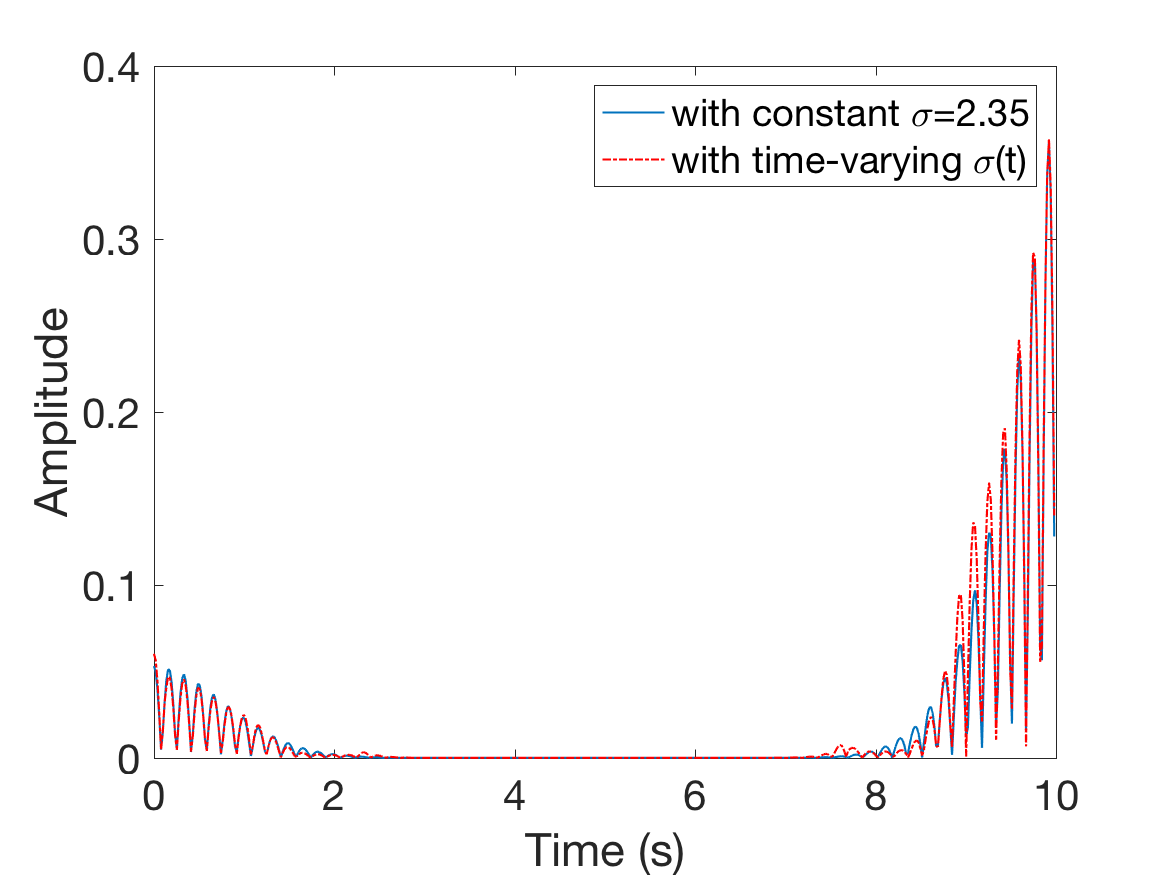}} \\
	

	&	\resizebox{2.1in}{1.4in}{\includegraphics{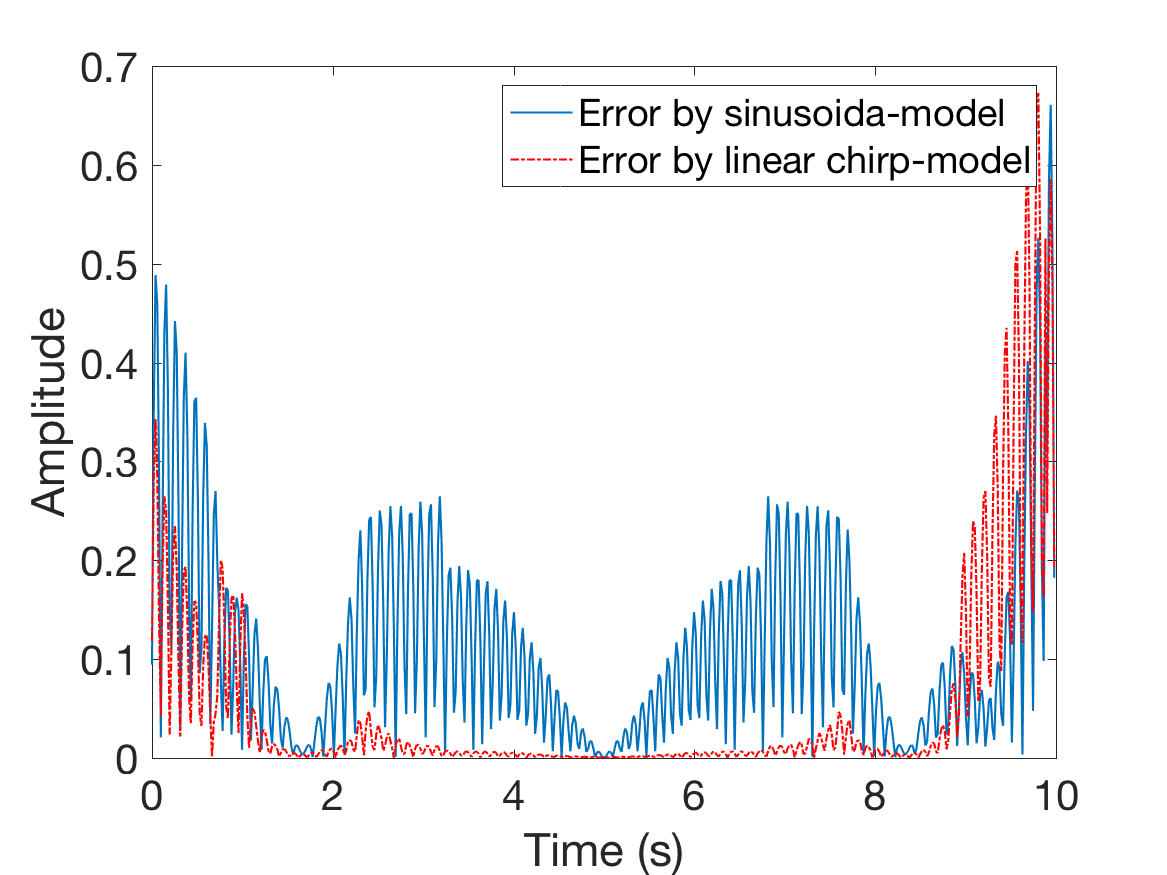}} &
	\resizebox{2.1in}{1.4in}{\includegraphics{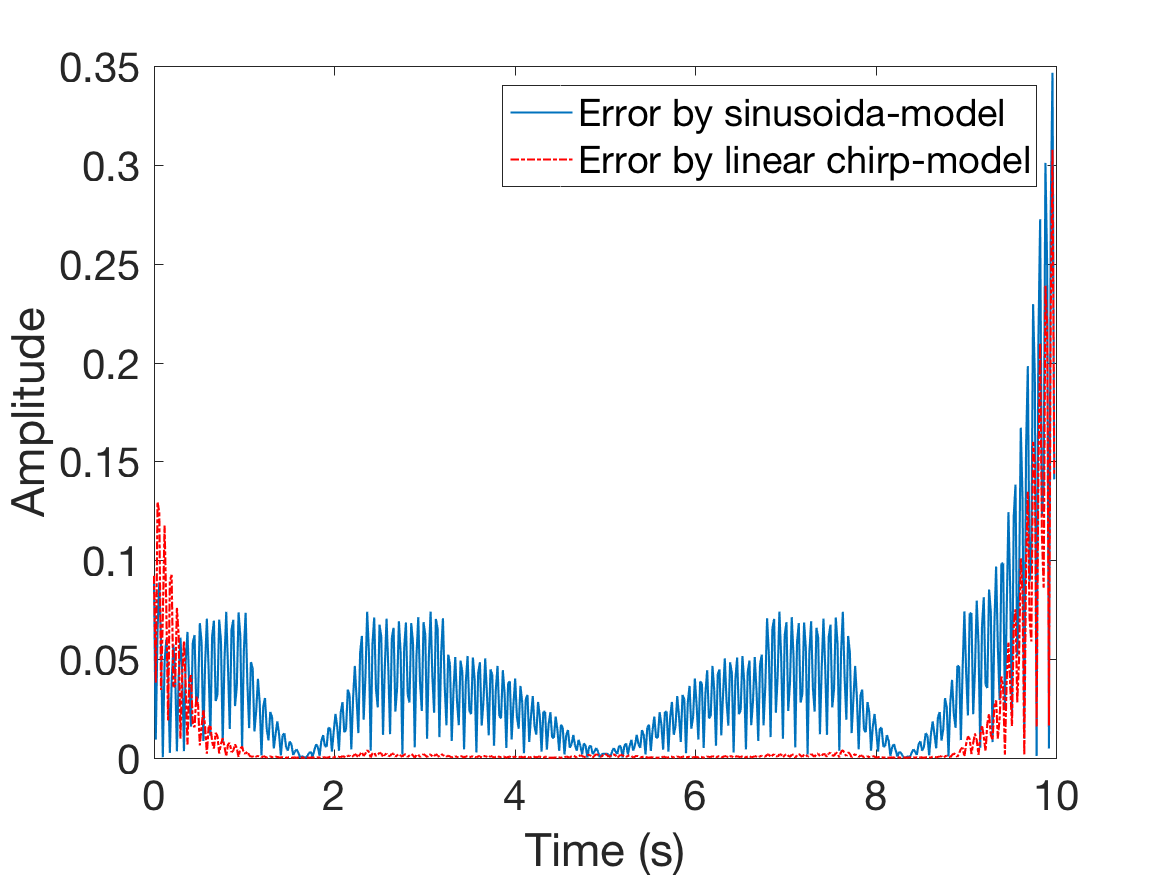}}
	\\
&
	\resizebox{2.1in}{1.4in}{\includegraphics{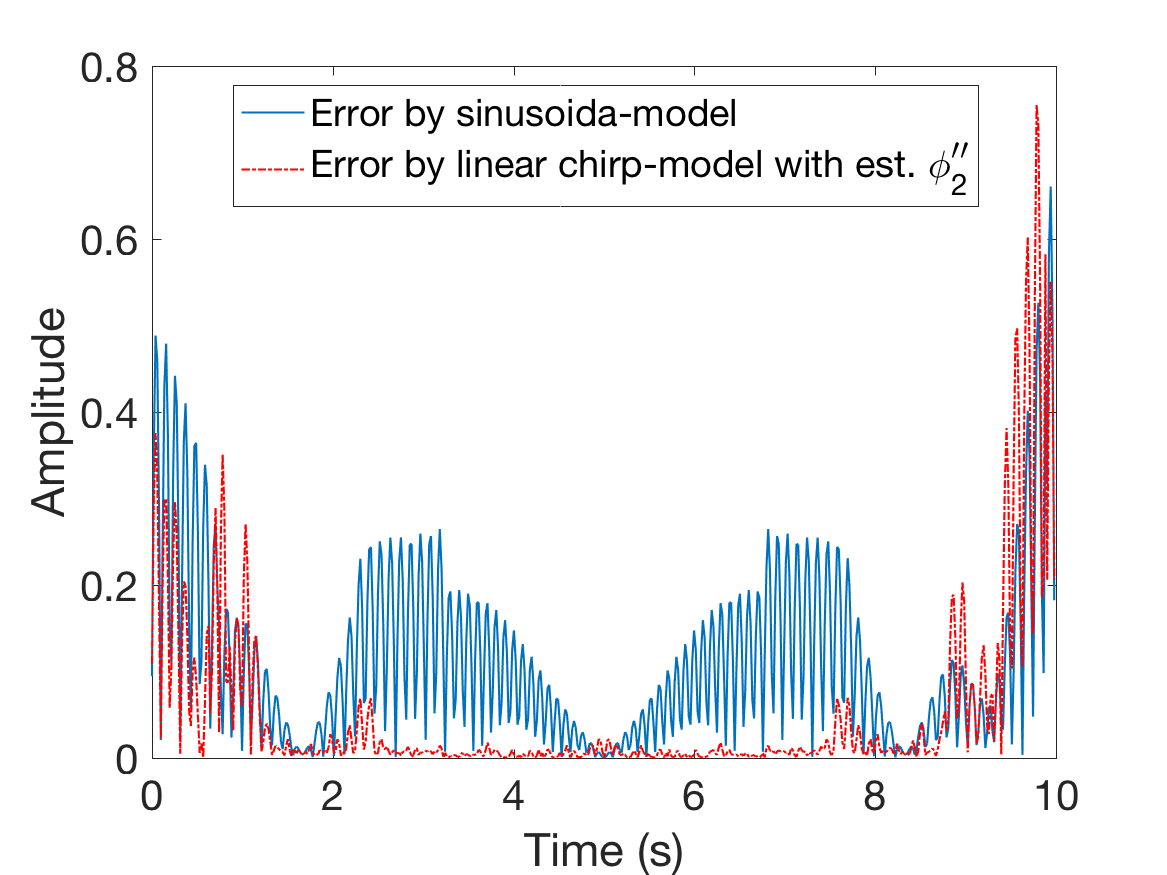}} &
	\resizebox{2.1in}{1.4in}{\includegraphics{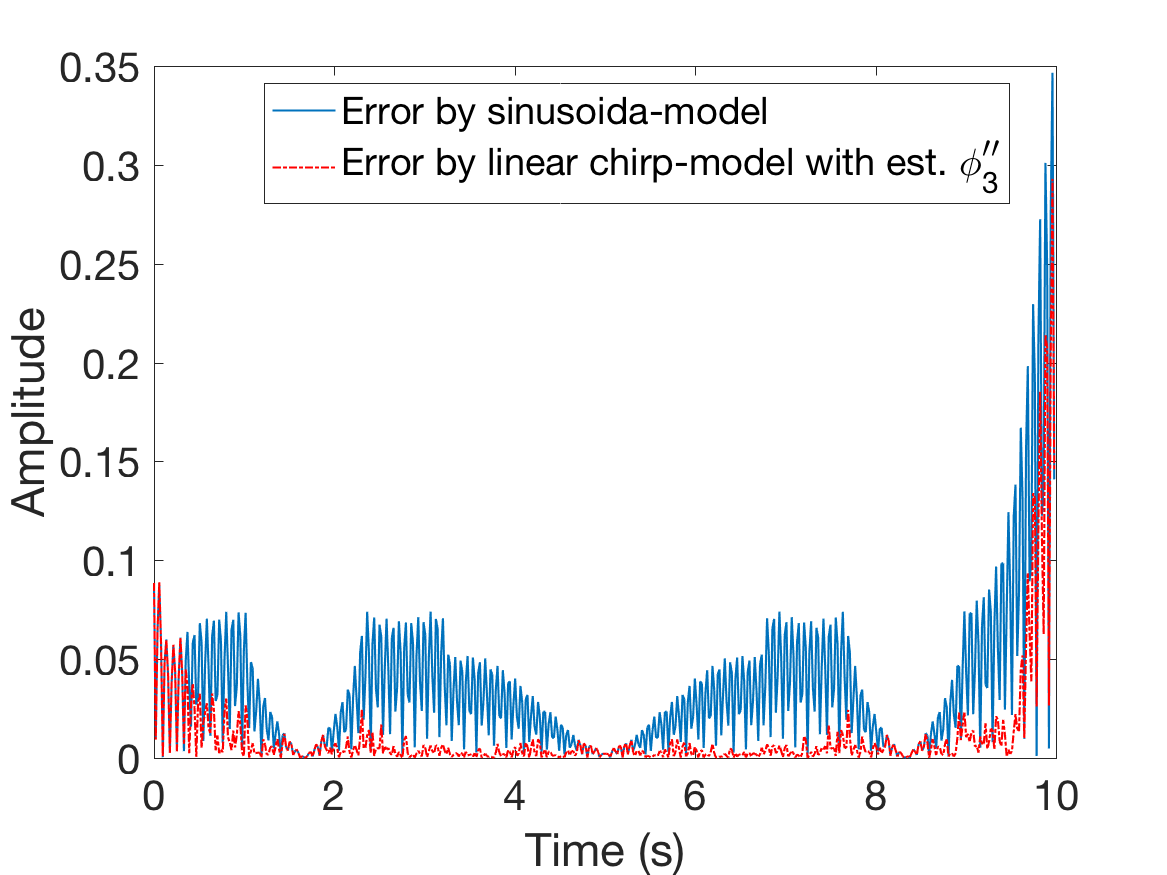}}
	
	\end{tabular}
	\caption{\small Recovery results of three-component signal $y(t)$. 
		Top row: recovered IFs of $y_1(t),   y_2(t), y_3(t)$ with $\gs(t)=2.35$ (top-left panel) and $\gs(t)=\gs_2(t)$ (top-middle panel), component recovery errors of $y_1(t)$  with $\gs(t)=2.35$ and $\gs(t)=\gs_2(t)$ (right panel); Middle row: recovery errors by  sinusoidal signal-based model (blue line) and by linear chirp-based model with ground truth $\phi^{\gp\gp}(t)$ in \eqref{comp_xk_est_2nd1_real_gaussian} (red dash-dotted line) for  $y_2(t)$ (left panel) and for $y_3(t)$ (right panel); 
Bottom row:  recovery errors by  sinusoidal signal-based model (blue line) and by linear chirp-based model with 
an estimated $\phi^{\gp\gp}(t)$ in \eqref{comp_xk_est_2nd1_real_gaussian} (red dash-dotted line) for  $y_2(t)$ (left panel) and for $y_3(t)$ (right panel).}
\label{fig:three_component_recovered}
\end{figure}


   In the middle row of Fig.\ref{fig:three_component_recovered}, we show the 
   recovery errors by  sinusoidal signal-based model (blue line) and by linear chirp-based model with ground truth $\phi^{\gp\gp}_2(t)$ in \eqref{comp_xk_est_2nd1_real_gaussian} (red dash-dotted line) for  $y_2(t)$, while the recovery errors for $y_3(t)$ 
by these two models are provided in the right panel. Using five-point formula for differentiation, we have approximations to $\phi^{\gp\gp}_\ell(t)$. With estimated  $\phi^{\gp\gp}_\ell(t)$ used in  \eqref{comp_xk_est_2nd1_real_gaussian},  the recovery errors by linear chirp-based model  (red dash-dotted line) for  $y_2(t)$ and $y_3(t)$ are shown in the bottom row of Fig.\ref{fig:three_component_recovered}. Again, linear chirp-based model leads more accurate signal separation. 
Here we use $\gs(t)=\gs_2(t)$. For $\gs=2.35$,  linear chirp-based model also performs much better than   sinusoidal signal-based model in component recovery.  

In addition, we use the relative root of mean square error (RMSE) to evaluate the errors of IF estimation and component reconstruction. The RMSE is defined as,
\begin{equation*}
{\rm RMSE}_\upsilon := \frac{1}{K} \sum_{k=1}^{K} \frac{\|\upsilon_k-\wh \upsilon_k\|_2} {\|\upsilon_k\|_2}, 
\end{equation*} 
where $\upsilon$ is a vector and $\wh\upsilon$ is an estimation of  $\upsilon$. Note that due to the errors near the endpoints, the RMSE is calculated over $j: \; N/8+1\le j\le 7N/8$, for all methods. We use ACWT (ACWTe respectively) to denote linear chirp-based model with ground truth $\phi^{\gp\gp}_\ell(t)$  (an estimated $\phi^{\gp\gp}_\ell(t)$ respectively) in \eqref{comp_xk_est_2nd1_real_gaussian}. 
The  RMSEs of  ACWT, ACWTe, CWT-based SST  (WSST) and the CWT-based 2nd-order SST (WSST2) and EMD are shown in Fig.\ref{fig:reconstruction_RMSE_three_comp}. 
The results in Fig.\ref{fig:reconstruction_RMSE_three_comp} demonstrate the correctness and efficiency of our proposed linear chirp-based method especially for signal reconstruction.

	\begin{figure}[H]
		\centering
		\hspace{-0.3cm}
		\begin{tabular}{cc}
			\resizebox{2.4in}{1.6in}{\includegraphics{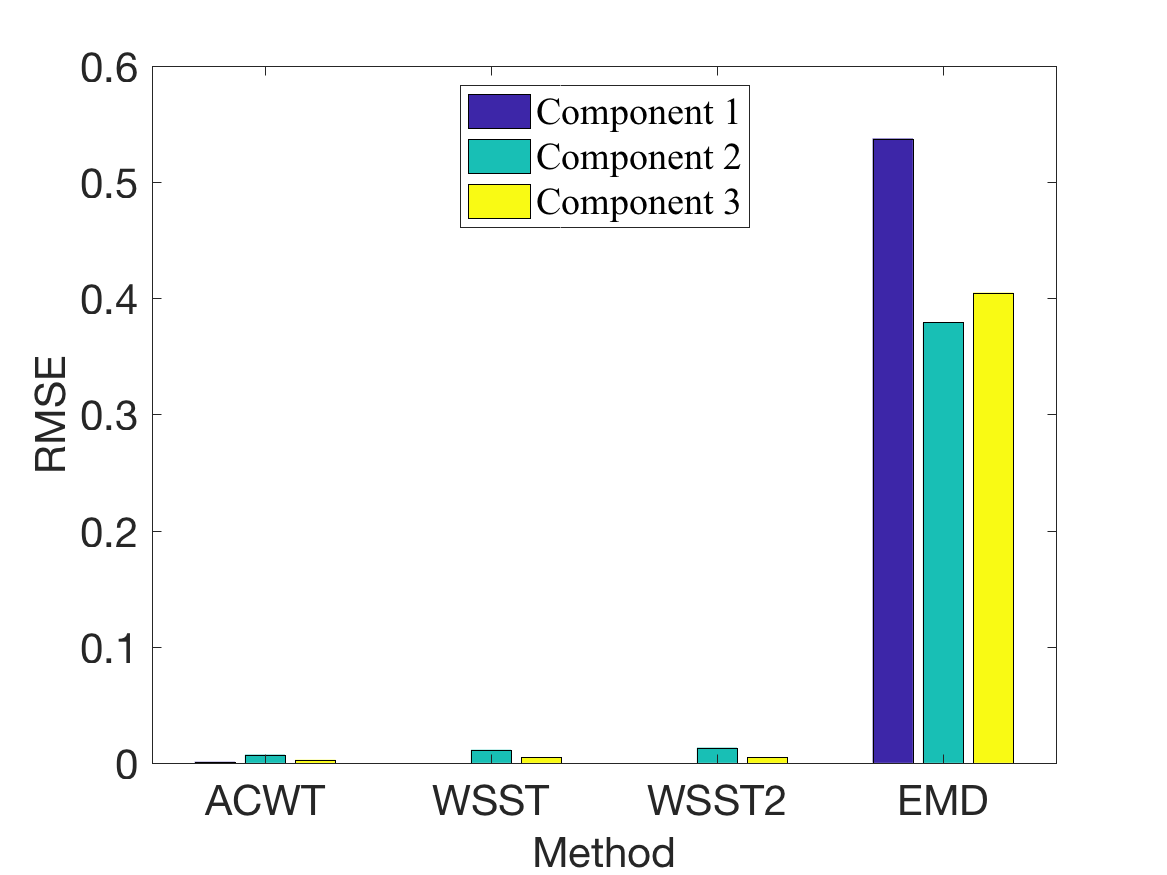}} 
			\resizebox{2.4in}{1.6in}{\includegraphics{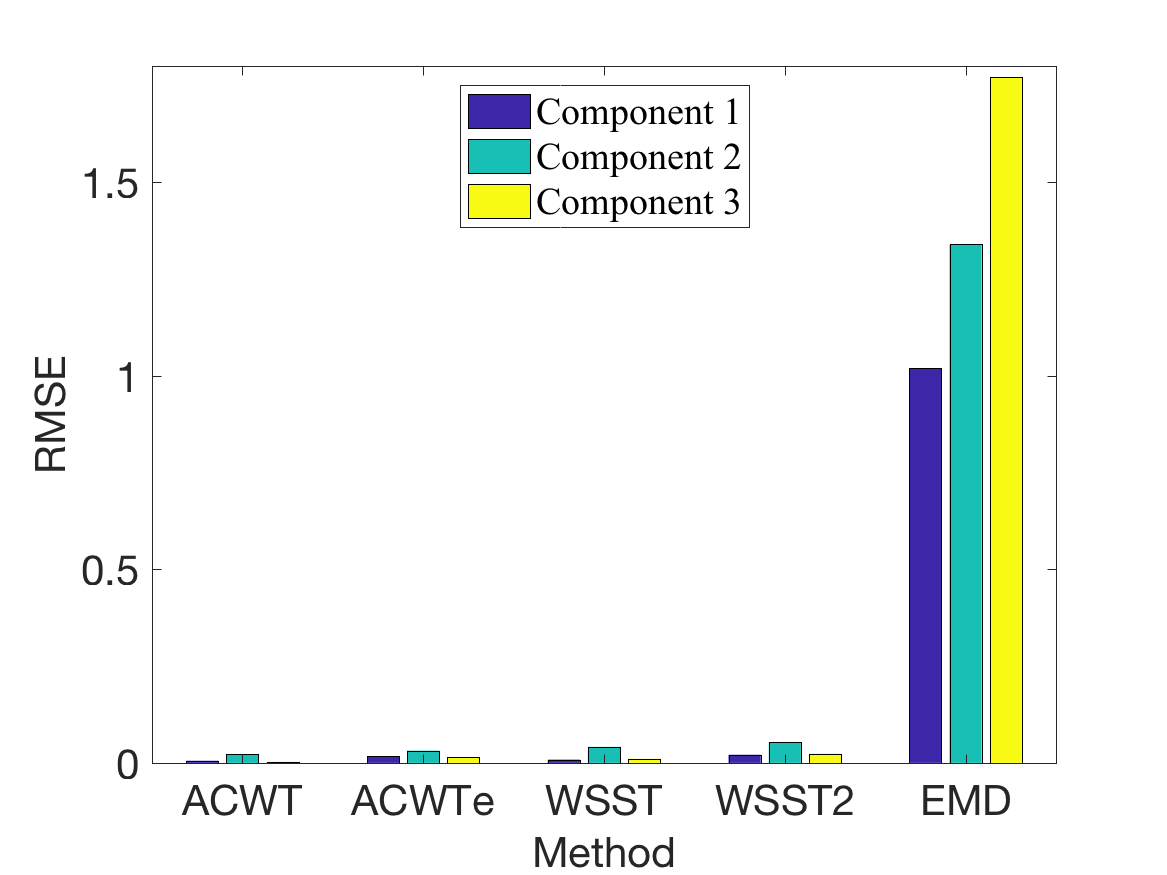}}
		\end{tabular}
		\caption{\small Performance comparisons of different methods for three-component signal $y(t)$. 
			Left:  RMSE of IF estimation; 
			Right: RMSE of component reconstruction.}
		\label{fig:reconstruction_RMSE_three_comp}
	\end{figure}

\end{document}